\newtheorem{propappend}{Proposition}
\newtheorem{lemappend}[propappend]{Lemma}
\newtheorem{corappend}[propappend]{Corollary}
\newtheorem{exmpappend}[propappend]{Example}
\newtheorem{thm}{Theorem}[section]
\newtheorem{prop}[thm]{Proposition}
\newtheorem{lem}[thm]{Lemma}
\newtheorem{cor}[thm]{Corollary}
\theoremstyle{definition}
\newtheorem{defn}[thm]{Definition}
\newtheorem{rem}[thm]{Remark}
\newtheorem{ques}[thm]{Question}
\renewcommand{\bar}[1]{\overline{#1}}
\renewcommand{\emptyset}{\varnothing}
\renewcommand{\setminus}{-}
\newcommand{\field}[1]{\mathbb{#1}}
\newcommand{\Z}{\field{Z}}
\newcommand{\R}{\field{R}}
\newcommand{\E}{\field{E}}
\newcommand{\PP}{\field{P}}
\renewcommand{\implies}{\Rightarrow}
\DeclareMathOperator{\CAT}{CAT}
\title[Quasiconvexity in $3$--manifold groups]{Quasiconvexity in $3$--manifold groups}
\author{Hoang Thanh Nguyen}
\address{Beijing International Center for Mathematical Research\\
Peking University\\
 Beijing 100871, China
P.R.}
\email{nthoang.math@gmail.com}
\author{Hung Cong Tran}
\address{University of Oklahoma, Norman, OK 73019-3103, USA}
\email{Hung.C.Tran-1@ou.edu}
\author{Wenyuan Yang}
\address{Beijing International Center for Mathematical Research\\
Peking University\\
 Beijing 100871, China
P.R.}
\email{wyang@bicmr.pku.edu.cn}
\date{\today}
\begin{document}
\begin{abstract}
In this paper, we study strongly quasiconvex subgroups in a finitely generated $3$--manifold group $\pi_1(M)$. We prove that if $M$ is a compact, orientable $3$--manifold that does not have a summand supporting the Sol geometry in its sphere-disc decomposition then a finitely generated subgroup $H \le \pi_1(M)$ has finite height if and only if $H$ is strongly quasiconvex. On the other hand, if $M$ has a summand supporting the Sol geometry in its sphere-disc decomposition then $\pi_1(M)$ contains finitely generated, finite height subgroups which are not strongly quasiconvex. We also characterize strongly quasiconvex subgroups of graph manifold groups by using their finite height, their Morse elements, and their actions on the Bass-Serre tree of $\pi_1(M)$. This result strengthens analogous results in right-angled Artin groups and mapping class groups. Finally, we characterize hyperbolic strongly quasiconvex subgroups of a finitely generated $3$--manifold group $\pi_1(M)$ by using their undistortedness property and their Morse elements.
\end{abstract}
\maketitle

\section{Introduction}
In geometric group theory, one method to understand the structure of a group $G$ is to investigate subgroups of $G$. Using this approach, one often investigates subgroup $H \le G$ whose geometry reflects that of $G$. Quasiconvex subgroup of a hyperbolic group is a successful application of this approach. It is well-known that quasicovex subgroups of hyperbolic groups are finitely generated and have finite height \cite{GMRS98}. The \emph{height} of a subgroup $H$ in a group $G$ is the smallest number $n$ such that for any $(n+1)$ distinct left cosets $g_{1}H, \cdots, g_{n+1}H$ the intersection $\bigcap_{i=1}^{n+1} g_{i} H g_{i}^{-1}$ is always finite. It is a long-standing question asked by
Swarup that whether or not the converse is true (see Question~1.8 in \cite{Bes}). If the converse is true, then we could characterize quasiconvex subgroup $H$ of a hyperbolic group $G$ purely in terms of group theoretic notions.

Outside hyperbolic settings, quasiconvexity is not preserved under quasi-isometry. This means that we can not define quasiconvex subgroups of a non-hyperbolic group $G$ which are independent of the choice of finite generating set for $G$. Therefore, the second author \cite{Tran2017} develops a theory of \emph{strongly quasiconvex} subgroups of an arbitrary finitely generated group. Strong quasiconvexity does not depend on the choice of finite generating set of the ambient group and it agrees with quasiconvexity when the ambient group is hyperbolic.

\begin{defn}
Let $G$ be a finitely generated group and $H$ a subgroup of $G$. We say $H$ is \emph{strongly quasiconvex} in $G$ if for every $L \ge 1$, $C \ge 0$ there is some $R = R(L,C)$ such that every $(L,C)$--quasi-geodesic in $G$ with endpoints on $H$ is contained in the $R$--neighborhood of $H$.
\end{defn}

In \cite{Tran2017}, the second author shows that strongly quasiconvex subgroups of an arbitrary finitely generated group are also finitely generated and have finite height. Therefore, it is reasonable to extend the Swarup's question to strongly quasiconvex subgroups of finitely generated groups.

\begin{ques}[Question~1.4 \cite{2017arXiv170309032C}]
\label{ques:Hung}
Let $G$ be a finitely generated group and $H$ a finitely generated subgroup. If $H$ has finite height, is $H$ strongly quasiconvex?
\end{ques}

In this paper, we answer Question~\ref{ques:Hung} for the case $G$ is a finitely generated $3$--manifold group $\pi_1(M)$. We first prove that if $M$ has the Sol geometry, then $\pi_1(M)$ contains a finitely generated, finite height subgroup which is not strongly quasiconvex. This result gives a counterexample for Question~\ref{ques:Hung}.

\begin{prop}
\label{pintro}
Let $M$ be a compact, orientable $3$--manifold. Suppose that $M$ has a summand supporting the Sol geometry in its sphere-disc decomposition. Then $\pi_1(M)$ contains finitely generated, finite height subgroups which are not strongly quasiconvex. 
\end{prop}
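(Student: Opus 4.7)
The plan is to exhibit a cyclic subgroup $H$ of the Sol summand with height $1$ but not strongly quasiconvex, the obstruction coming from the exponential distortion of the $\Z^2$-fiber.

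Let $N$ be a summand of $M$ supporting Sol geometry. In the sphere-disc decomposition the prime factor $\pi_1(N)$ appears as a free factor of $\pi_1(M)$, hence sits malnormally and undistortedly; passing to a double cover if necessary we may assume $\pi_1(N) = \Z^2 \rtimes_{\phi} \Z$ with $\phi \in \SL(2, \Z)$ Anosov. Let $t$ generate the $\Z$-factor and set $H := \langle t \rangle$.

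\emph{Finite height.} Writing $g = (v, m)$, direct calculation gives $g t^n g^{-1} = (v - \phi^n(v), n)$, which lies in $\langle t \rangle$ iff $\phi^n(v) = v$. Anosov $\phi$ has only the zero fixed vector, so $v = 0$ and $g \in H$. Hence $H$ is self-normalizing and any two distinct conjugates intersect trivially, giving height $1$ in $\pi_1(N)$; malnormality of $\pi_1(N)$ in $\pi_1(M)$ as a free factor promotes this to height $1$ in $\pi_1(M)$.

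\emph{Not strongly quasiconvex.} The $\Z^2$-fiber is exponentially distorted in $\pi_1(N)$: a vector of Euclidean norm $R$ has word length $\asymp \log R / \log \lambda$, where $\lambda > 1$ is the leading eigenvalue of $\phi$, as witnessed by the identity $v = t^k u t^{-k}$ with $k \asymp \log R / \log \lambda$ and $u \in \Z^2$ bounded. Equivalently, work in the quasi-isometric Sol Lie group $\tilde N = \R^2 \rtimes_\phi \R$ with metric $ds^2 = e^{-2z}dx^2 + e^{2z}dy^2 + dz^2$, so that $H$ corresponds to the $z$-axis. For each $N \ge 1$ I construct a uniform quasi-geodesic $\gamma_N$ from $(0,0,0)$ to $(0,0,2N)$ whose image contains a point at Sol-distance $\asymp N$ from the $z$-axis. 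The construction is a broken Sol geodesic passing through the far point $(\lambda^N, 0, 0)$, with a small $y$-perturbation inserted between the ascending and descending horocyclic shortcuts to prevent them from retracing one another. Since the Sol-distance from the midpoint of $\gamma_N$ to the $z$-axis diverges with $N$, no bounded neighborhood of $H$ contains all the $\gamma_N$; by undistortion of $\pi_1(N)$ in $\pi_1(M)$, the same conclusion holds in $\pi_1(M)$ and so $H$ is not strongly quasiconvex.

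\emph{Main obstacle.} The delicate step is that $\gamma_N$ must be a \emph{uniform} quasi-geodesic: the naive three-segment path $e \to \phi^N(a_1) \to t^{2N}$ has both geodesic subsegments transiting $t^N$, producing a backtrack that forces the additive constant in the quasi-geodesic condition to grow with $N$. Breaking this symmetry via a small $y$-perturbation of the turning point, or equivalently invoking the well-known absence of Morse subsets in Sol geometry (for example via asymptotic-cone computations), is where the Sol-specific geometry must be handled carefully.
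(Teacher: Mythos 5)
The genuine gap is in the second half, the claim that $H=\langle t\rangle$ is not strongly quasiconvex in the Sol factor, and your own ``main obstacle'' paragraph concedes it. Everything hinges on the paths $\gamma_N$ being quasi-geodesics with constants independent of $N$; as you note, the unperturbed three-leg path genuinely fails (its two legs both fellow-travel the vertical segment through $t^N$, so the backtrack forces the constants to grow with $N$), and the proposal offers no verification that the ``small $y$-perturbation'' repairs this. That verification is a real piece of Sol geometry -- one must control divergence between the descending leg and the re-ascending leg after the $y$-offset, and check the quasi-geodesic inequality for pairs of points on different legs -- and none of it appears. Your fallback, ``invoke the well-known absence of Morse subsets in Sol,'' is in substance exactly how the paper closes this step, but it has to be carried out rather than asserted: the paper cites the facts that asymptotic cones of solvable groups have no global cut-points and hence such groups contain no Morse elements, concluding in Corollary~\ref{cor:nsq} that a nontrivial infinite-index subgroup of $\Z^2\rtimes_{\phi}\Z$ is never strongly quasiconvex, and then transfers this to $\pi_1(M)$ via Proposition~4.11 of Tran (strong quasiconvexity in $\pi_1(M)$ would intersect down to strong quasiconvexity in the undistorted free factor) -- the same transfer you gesture at with ``by undistortion.'' As written, the central non-quasiconvexity claim is asserted, not proved; either complete the uniform quasi-geodesic construction or replace it by the no-Morse-element argument with its supporting references.

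The finite-height half is essentially sound, and in places more hands-on than the paper: your computation $gt^ng^{-1}=(v-\phi^n(v),n)$ plus the absence of nonzero fixed vectors of $\phi^n$ is exactly the malnormality statement of Lemma~\ref{p3} (with $m=1$), and malnormality together with undistortedness of free factors, combined with Proposition~\ref{p0}, does give finite height in $\pi_1(M)$ (the paper instead gets finite height of the Sol factor from relative hyperbolicity of the free product plus Theorem~\ref{tran}). Two small corrections: ``passing to a double cover we may assume $\pi_1(N)=\Z^2\rtimes_\phi\Z$'' is not literally available -- you must keep $H$ inside the index-two torus-bundle subgroup and push finite height up with Proposition~\ref{p0}(2), as the paper does in Lemma~\ref{lem:fhSol}; and in that case the height of $H$ in $\pi_1(M)$ need not be $1$, only finite, which is all the proposition requires.
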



For the proof of Proposition~\ref{pintro}, we first consider the case that $M$ supports the Sol geometry. In this case, by passing to a double cover, we get a manifold that is a torus bundle with Anosov monodromy $\Phi$. This implies that $\pi_1(M)$ contains an abelian-by-cyclic subgroup $\field{Z}^2\rtimes_{\Phi} \field{Z}$ as a finite index subgroups. Therefore, we study all strongly quasiconvex subgroups and finite height subgroups in abelian-by-cyclic subgroups $\field{Z}^k\rtimes_{\Phi} \field{Z}$ (see Appendix A) and Proposition~\ref{pintro} in this case (i.e. $M$ supports the Sol geometry) is obtained from those results. In the general case, let $M_1, M_2, \dots, M_n$ be the pieces of $M$ under the sphere-disc decomposition such that $M_{i_0}$ supports the Sol geometry for some $i_{0} \in \{1, \dots, n\}$. By the early observation, we can choose a finitely generated, finite height subgroup $A$ of $\pi_1(M_{i_0})$ such that $A$ is not strongly quasiconvex in $\pi_1(M_{i_0})$. The subgroup $A$ actually has finite height in $\pi_1(M)$, but it is not strongly quasiconvex in $M$.

When $M$ does not have a summand supporting the Sol geometry in its sphere-disc decomposition, we have the following theorem.
\begin{thm}
\label{thm:introduction2}
Let $M$ be a compact, orientable $3$--manifold that does not have a summand supporting the Sol geometry in its sphere-disc decomposition. Then a finitely generated subgroup $H$ in $\pi_1(M)$ has finite height if and only if $H$ is strongly quasiconvex in $\pi_1(M)$.
\end{thm}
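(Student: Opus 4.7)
The forward direction---strong quasiconvexity implies finite height---holds in every finitely generated group by the second author's prior work, so the content is the converse. The plan is to decompose $\pi_1(M)$ hierarchically, first via the sphere--disc decomposition and then via the torus (JSJ) decomposition of each prime summand, reduce the statement to each geometric piece, and reassemble via Bass--Serre theory. The Sol-free hypothesis enters at the bottom of the hierarchy to exclude exponentially distorted $\Z^2$ peripherals, which Proposition~\ref{pintro} identifies as the only obstruction.

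At the top, $\pi_1(M)\cong\pi_1(M_1)*\cdots*\pi_1(M_n)*F_k$ is a free product. For a finitely generated $H\le\pi_1(M)$, Kurosh's theorem writes $H$ as the fundamental group of a finite graph of groups whose vertex groups are of the form $H\cap g\pi_1(M_i)g^{-1}$. Using the Bass--Serre tree of the free product, I plan to show that $H$ has finite height in $\pi_1(M)$ if and only if every such vertex group has finite height in the corresponding $\pi_1(M_i)$, with the analogous equivalence for strong quasiconvexity, reducing the theorem to the case where $M$ is prime and non-Sol. Such $M$ falls into three sub-cases: (a) $M$ is closed hyperbolic; (b) $M$ is a graph manifold (which includes every single-piece Seifert case with geometry $\Hyp^2\times\R$, $\widetilde{\SL_2\R}$, $\mathrm{Nil}$, or $\E^3$); or (c) $M$ is mixed, with both hyperbolic and Seifert JSJ pieces.

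Case (b) is handled directly by the graph manifold theorem announced in the abstract. For (a), Agol's tameness theorem and Canary's covering theorem force every finitely generated $H$ to be either geometrically finite or a virtual fiber; virtual fibers are normal of infinite index in a finite-index subgroup and hence have infinite height, so finite height pushes $H$ into the geometrically finite regime, where it is quasiconvex and therefore strongly quasiconvex. For (c), $\pi_1(M)$ is hyperbolic relative to its Seifert JSJ vertex subgroups together with peripheral tori. The plan is to first apply Hruska-type criteria to show that a finitely generated, finite-height $H$ is relatively quasiconvex; then to upgrade to strong quasiconvexity by analyzing each peripheral intersection $H\cap gPg^{-1}$. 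The non-Sol hypothesis ensures each peripheral $P$ is virtually $\Z^2$ or a non-Sol Seifert fibered group, so finite height of $H\cap gPg^{-1}$ in $P$ forces strong quasiconvexity in $P$ (via the graph manifold subresult or a direct argument), which is exactly the promotion condition from relative to strong quasiconvexity.

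The principal obstacle will be this upgrade step: relative quasiconvexity is blind to peripheral distortion while strong quasiconvexity is not, and Proposition~\ref{pintro} shows the upgrade genuinely fails in the presence of Sol peripherals. Tracking the finite-height/strong-quasiconvexity correspondence uniformly across every peripheral, and consistently through both layers of Bass--Serre reassembly, is where the technical heart of the argument lies.
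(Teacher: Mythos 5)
Your overall architecture (free product reduction via Kurosh, then geometric/graph/mixed trichotomy, then promotion inside a relatively hyperbolic structure) matches the paper's, but there is a genuine gap at the step you yourself flag as the heart of the argument. You propose to ``apply Hruska-type criteria to show that a finitely generated, finite-height $H$ is relatively quasiconvex'' and then upgrade. No such criterion exists: finite height by itself carries no metric information --- that is exactly the content of Swarup's question --- and Hruska's criteria (and Theorem~\ref{relhyp111}, which is the actual promotion tool) take \emph{undistortedness} as a hypothesis, not finite height. The paper's route is different and this difference is essential: it first proves $H$ is undistorted in $\pi_1(M)$ using the 3--manifold-specific distortion theorem of \cite{NS19} (finite height forces each intersection $H\cap g\pi_1(M_i)g^{-1}$ with a Seifert piece to be finite or of finite index, and with a hyperbolic piece to be geometrically finite, so the ``almost fiber surface'' is empty and $H$ is undistorted), and only then applies Theorem~\ref{relhyp111} together with the graph-manifold theorem and Proposition~\ref{central} to handle the peripheral intersections. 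Without that undistortedness input your upgrade step has no hypothesis to run on. Relatedly, your peripheral structure for the mixed case is not correct as stated: $\pi_1(M)$ is hyperbolic relative to the \emph{maximal graph manifold components}, isolated Seifert pieces, and certain JSJ/boundary tori, not relative to the individual Seifert vertex groups (adjacent Seifert pieces share a $\Z^2$, so that collection is not almost malnormal).

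A second, separate gap is coverage. After the sphere--disc decomposition the prime summands can have boundary components of genus at least $2$; such summands are neither geometric nor graph/mixed manifolds in your trichotomy, and they occupy a substantial part of the paper's proof (Proposition~\ref{prop:finiteheightinhigherboundary} and Case~2.2), which pastes totally geodesic hyperbolic pieces onto the high-genus boundary to embed $\pi_1(M)$ as a strongly quasiconvex subgroup of a hyperbolic or mixed manifold group $\pi_1(N)$, transfers finite height up via Proposition~\ref{p0}, and pulls strong quasiconvexity back down via Proposition~4.11 of \cite{Tran2017}. Your case list also omits cusped (finite-volume, tori-boundary) hyperbolic summands, though those are comparatively easy via relative hyperbolicity with respect to the cusp tori (Proposition~\ref{prop:fhtorihyp}). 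The closed hyperbolic and graph manifold branches of your plan are fine and agree with the paper.
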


It is well-known that if $F_2$ is the free group of rank $2$, then the free-by-cyclic group $G = F_2 \rtimes_{\phi} \Z$ (for some automorphism $\phi$ from $F_2$ to $F_2$) is the fundamental group of a $3$--manifold $M$, that is the mapping torus of the compact connected surface $\Sigma$ with one circle boundary and one genus. The manifold $M$ does not support Sol geometry, so Theorem~\ref{thm:introduction2} has the following corollary.

\begin{cor}
\label{cor:introduction1}
Let $G = F_{2} \rtimes_{\phi} \Z$ be a free-by-cyclic group, where $F_2$ is a free group of rank $2$. Let $H$ be a finitely generated subgroup of $G$. Then $H$ has finite height in $G$ if and only if $H$ is strongly quasiconvex.
\end{cor}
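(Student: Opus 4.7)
The plan is to apply Theorem~\ref{thm:introduction2} directly; all the work lies in checking that $M$ has no summand supporting Sol in its sphere--disc decomposition. The paper already records that $G \cong \pi_1(M)$ for $M$ the mapping torus of a homeomorphism $\widetilde{\phi}\colon \Sigma \to \Sigma$ of the once-punctured torus $\Sigma$ inducing $\phi$ on $\pi_1(\Sigma) \cong F_2$. I take $M$ to be compact and orientable, passing to an orientation double cover (and the corresponding finite-index subgroup of $G$) if needed; this is harmless because both finite height and strong quasiconvexity are inherited between a group and its finite-index subgroups.

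First I would observe that $M$ is aspherical: $\Sigma$ is a $K(F_2,1)$, and the fiber bundle structure of $M$ over $S^1$ shows that the universal cover of $M$ is contractible. Any aspherical $3$-manifold with non-cyclic $\pi_1$ is irreducible (it cannot be $S^2\times S^1$, whose fundamental group is $\Z$), so the sphere decomposition of $M$ is trivial.

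Next I would show that $\partial M$ is incompressible, so that the disc decomposition is also trivial. After isotoping $\widetilde{\phi}$ to fix $\partial\Sigma$ pointwise, the boundary of $M$ is $\partial\Sigma \times S^1 \cong T^2$, and $\pi_1(\partial M)$ is generated inside $G$ by the stable letter $t$ and the boundary commutator $[a,b]$, for any free basis $\{a,b\}$ of $F_2$. Because $\widetilde{\phi}$ fixes $\partial\Sigma$ pointwise, the induced automorphism satisfies $\phi([a,b]) = [a,b]$; hence $t$ and $[a,b]$ commute in $G$, both have infinite order, and they are not proportional, so they generate a rank-$2$ free abelian subgroup of $G$. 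This shows that $\partial M$ is incompressible.

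Consequently, the sphere--disc decomposition of $M$ consists of the single piece $M$, which has nonempty boundary and therefore cannot carry the Sol geometry, since Sol $3$-manifolds are closed torus bundles over $S^1$ with Anosov monodromy (or finite quotients thereof). The hypothesis of Theorem~\ref{thm:introduction2} is thus satisfied, and the corollary follows immediately. The main obstacle is the incompressibility step, but it reduces to the algebraic identity $\phi([a,b]) = [a,b]$ for a suitable choice of automorphism representative in the given outer class, which is straightforward.
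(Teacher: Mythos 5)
Your proposal is correct and follows essentially the same route as the paper: realize $G$ as the fundamental group of the mapping torus of the once-punctured torus, note that no summand of the sphere--disc decomposition supports Sol, and invoke Theorem~\ref{thm:introduction2}; the paper does this in one line, and your verification that the decomposition is trivial (asphericity gives irreducibility, the boundary $\Z^2$ gives $\partial$--irreducibility) is a correct filling-in of the step the paper leaves implicit. One caveat: when the image of $\phi$ in $\GL_2(\Z)$ has determinant $-1$, the monodromy $\widetilde{\phi}$ is orientation-reversing, so $M$ is non-orientable with Klein bottle boundary and $\phi([a,b])$ is only conjugate to $[a,b]^{-1}$; the isotopy fixing $\partial\Sigma$ pointwise and the identity $\phi([a,b])=[a,b]$ are then available only after passing to the orientation double cover you set up at the start, i.e.\ for the mapping torus of $\widetilde{\phi}^{2}$ with stable letter $t^{2}$ and with $\phi^{2}$ in place of $\phi$, after which your argument goes through verbatim.
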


If $\phi \colon F_n \to F_n$ is \emph{geometric} (i.e. $\phi$ is induced from a homeomorphism $f \colon \Sigma \to \Sigma$ of some compact surface $\Sigma$), then the free-by-cyclic group $F_{n} \rtimes_{\phi} \Z$ is the fundamental group of a $3$--manifold $M$ which does not support the Sol geometry. Therefore, all finitely generated finite height subgroups of $G = F_{n} \rtimes_{\phi} \Z$ are strongly quasiconvex by Theorem~\ref{thm:introduction2}. However, not all group automorphism $\phi \colon F_n \to F_n$ is geometric (see \cite{MR1254309}). Therefore, the following question may be interesting. 

\begin{ques}
Let $\phi \colon F_n \to F_n$ be a non-geometric automorphism. Let $H$ be a finitely generated finite height subgroup of $F_{n} \rtimes_{\phi} \Z$. Is $H$ strongly quasiconvex in $F_{n} \rtimes_{\phi} \Z$?
\end{ques}


We now briefly discuss the proof of Theorem~\ref{thm:introduction2}. We assume that $H$ has finite height in $\pi_1(M)$ and we will prove that $H$ is a strongly quasiconvex subgroup. 
If $M$ has empty or tori boundary, we call $M$ is a \emph{geometric} manifold if its interior admits geometric structures in the sense of Thurston, that are $S^3$, $\mathbb{E}^3$, $\mathbb{H}^3$, $S^{2} \times \R$, $\mathbb{H}^{2} \times \R$, $\widetilde{SL}(2, \R)$, Nil and Sol. If $M$ is not geometric, $M$ is called \emph{nongeometric} $3$--manifold. When $M$ is geometric $3$--manifold, the proof is relatively easy. We refer the reader to Section~\ref{subsection:fggeometricmanifold} for details.

By Geometrization Theorem, a nongeometric $3$--manifold can be cut into hyperbolic and Seifert fibered “pieces” along a JSJ decomposition. It is called a \emph{graph manifold} if all the pieces are Seifert fibered spaces, otherwise it is a \emph{mixed manifold}. If $M$ is a mixed $3$--manifold, then $\pi_1(M)$ is relatively hyperbolic with respect to the fundamental groups of maximal graph manifold components, isolated Seifert components, and isolated JSJ tori (see \cite{Dah03} or \cite{BW13}). Therefore, we first study strongly quasiconvex subgroups and finite height subgroups in graph manifold groups and obtain Theorem~\ref{thm:introduction2} in this case. Then we use Theorem~\ref{relhyp111} which characterizes strongly quasiconvex subgroups of relatively hyperbolic groups to obtain Theorem~\ref{thm:introduction2} for the case of mixed manifold $M$. We note that the undistortedness of the subgroup $H$ must be the first step before using the above strategy. We use the recent work of Sun and the first author (see \cite{NS19}) to prove the undistortedness of $H$.

We note that the compact, connected, orientable, irreducible and $\partial$--irreducible manifold $M$ could have boundary components that are higher genus surfaces. If we are in this situation, we use the filling argument as in \cite{Sun18} to get a mixed $3$--manifold $N$ (resp. hyperbolic $3$--manifold) if $M$ has nontrivial torus decomposition (resp. $M$ has trivial torus decomposition) such that $M$ is a submanifold of $N$ with some special properties. We show that $\pi_1(M)$ is strongly quasiconvex in $\pi_1(N)$. Then we use Proposition \ref{p0} to show that $H$ also has finite height in $\pi_1(N)$. By the previous paragraph, we have that $H$ is strongly quasiconvex in $\pi_1(N)$. Then we can conclude that $H$ is strongly quasiconvex in $\pi_1(M)$ by Proposition~4.11 in \cite{Tran2017}


The most difficult part of this paper is to study strongly quasiconvex subgroups of graph manifold groups. 

\begin{thm}
\label{coolforgraphmanifold}
Let $M$ be a graph manifold. Let $H$ be a nontrivial, finitely generated subgroup of infinite index of $\pi_1(M)$. Then the following are equivalent:
\begin{enumerate}
    \item $H$ is strongly quasiconvex;
    \item $H$ has finite height in $\pi_1(M)$;
    \item All nontrivial group elements in $H$ are Morse in $\pi_1(M)$;
    \item The action of $H$ on the Bass-Serre tree of $M$ induces a quasi-isometric embedding of $H$ into the tree.
\end{enumerate}
Moreover, $H$ is a free group if some (any) above condition holds.
\end{thm}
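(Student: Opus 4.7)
Our plan is to establish the cycle of implications $(1)\Rightarrow(2)\Rightarrow(3)\Rightarrow(4)\Rightarrow(1)$; freeness of $H$ will emerge along the way. The implications $(1)\Rightarrow(2)$ and $(1)\Rightarrow(3)$ require no special feature of $M$: they are general properties of strongly quasiconvex subgroups of an arbitrary finitely generated group, proved in \cite{Tran2017}. For $(1)\Rightarrow(3)$ we also use that graph manifold groups are torsion-free, so every nontrivial element of $H$ has infinite order and hence is actually Morse in $\pi_1(M)$.

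For $(3)\Leftrightarrow(4)$ we invoke the characterization, specific to graph manifold groups, that an element of $\pi_1(M)$ is Morse exactly when it acts loxodromically on the Bass-Serre tree $T$ of the JSJ decomposition. Granting this, condition $(3)$ says that $H$ acts on $T$ with no elliptic elements, i.e. freely. Since $H$ is finitely generated there is a unique minimal $H$-invariant subtree $T_H \subseteq T$ with finite quotient $T_H/H$, so $H$ acts freely and cocompactly on $T_H$. By Bass-Serre theory this forces $H$ to be a finitely generated free group (this settles the ``moreover'' clause), and by the Milnor-Schwarz lemma the orbit map provides the quasi-isometric embedding $H \hookrightarrow T$ required by $(4)$. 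Conversely, $(4)$ forces every nontrivial element of $H$ to have positive translation length on $T$, hence to act loxodromically, hence to be Morse.

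The implication $(2)\Rightarrow(3)$ is proved by contrapositive. Suppose some $h\in H\setminus\{1\}$ is non-Morse; then $h$ is elliptic on $T$ and, after conjugation, lies in a vertex group $G_v=\pi_1(S_v)$. Analyzing the centralizer structure of a Seifert piece one finds that $C:=C_{\pi_1(M)}(h)$ is large: it contains either all of $G_v$ (when a power of $h$ is the regular fiber) or a $\Z^2$ subgroup of $G_v$ (otherwise). Using that $H$ has infinite index in $\pi_1(M)$ together with the incident edge tori around $v$, one extracts an infinite family of pairwise distinct cosets $g_iH$ with $g_i \in C$, so that $h \in g_i H g_i^{-1}$ for all $i$; this contradicts the finite height hypothesis. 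The main subtlety here is to ensure that the centralizer $C$ indeed surjects onto infinitely many cosets of $H$; when $C\cap H$ happens to have finite index in $C$, one must enlarge $C$ by exploiting the fiber of an adjacent Seifert piece to produce further conjugating elements.

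The technical heart of the theorem is $(4)\Rightarrow(1)$. Work in the universal cover $\widetilde M$, which decomposes as a tree of Seifert chambers of the form $\widetilde S_v \times \R$ dual to $T$. Given an $(L,C)$-quasi-geodesic $\gamma$ in $\pi_1(M)$ with endpoints in $H$, its combinatorial image in $T$ is a quasi-geodesic between two points of the orbit $Hv_0$; by $(4)$ this image stays within a uniform neighborhood of $Hv_0$ in $T$. The hard part is to promote this tree-level control to a uniform neighborhood bound for $\gamma$ inside $\pi_1(M)$: in each chamber $\widetilde S_v \times \R$ visited by $\gamma$, one must bound deviation in the hyperbolic-base direction using the Morse lemma in $\Hyp^2$ and deviation in the fiber $\R$-direction using the constraint that $\gamma$ must exit the chamber through the edge dictated by its tree projection. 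Assembling these piece-by-piece estimates via the graph-manifold distance formulas (in the spirit of Kapovich-Leeb) will yield the desired strong quasiconvexity. This chamber-by-chamber assembly is where we expect the main obstacle to lie.
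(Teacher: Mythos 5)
Your skeleton is right, but the two load-bearing implications are not actually proved. The most serious gap is $(4)\Rightarrow(1)$ (equivalently the paper's $(3)\Rightarrow(1)$), which you yourself identify as the technical heart and leave as a plan. Its first step already fails as stated: the image in the Bass--Serre tree of an $(L,C)$--quasi-geodesic of $\pi_1(M)$ is only coarsely Lipschitz, not a quasi-geodesic of the tree (an ambient quasi-geodesic may backtrack in the tree on subsegments at linear cost), so you cannot invoke $(4)$ to conclude that the chambers it visits stay uniformly close to the orbit $Hv_0$. Even granting tree-level control, the per-chamber estimate is exactly the unresolved difficulty: inside a chamber $\widetilde{F}\times\R$ the fiber direction is undistorted, so a quasi-geodesic can stray linearly far in the fiber while still ``exiting through the dictated edge''; bounding this requires the flip/transversality of fibers in adjacent chambers in a quantitative way. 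The paper avoids quantifying over arbitrary quasi-geodesics altogether: it builds a Scott core of $M_H$ whose lift $\tilde K$ meets each chamber in a uniformly bounded, simply connected set (Lemma~\ref{lem:scottcoreuniversal}), transfers to a flip manifold via the Kapovich--Leeb bilipschitz equivalence, proves $\tilde K$ is contracting for Sisto's special-path system (Lemmas~\ref{lem:2} and~\ref{lem:contracting}), and only then upgrades to all quasi-geodesics by the general Lemma~\ref{lem:wellknown}. Some contracting-projection mechanism of this kind is needed; the ``chamber-by-chamber assembly'' you defer is not a routine completion but the actual content of Proposition~\ref{pkey}.

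The implication $(2)\Rightarrow(3)$ is also incomplete in precisely the case you flag. When $[C:C\cap H]=\infty$ your centralizer argument does contradict finite height, but when $C\cap H$ has finite index in $C$, ``enlarging $C$ by the fiber of an adjacent piece'' can land you in the finite-index case again; one must iterate the dichotomy around the Bass--Serre tree of the JSJ graph and, at the end, still rule out that $H$ meets every conjugate of every vertex group in finite index while having infinite index in $\pi_1(M)$. The paper packages exactly this propagation as Proposition~\ref{fhimpliessq} and Proposition~\ref{prop:tree}, fed by Proposition~\ref{central} for the Seifert vertex groups, and that route (or a completed induction of your kind) is what a proof requires. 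On the positive side, your treatment of $(3)\Rightarrow(4)$ and of freeness is correct and genuinely simpler than the paper's: once every nontrivial element of $H$ is loxodromic on the tree (Proposition~\ref{psis}), the action is free, the minimal subtree has finite quotient because $H$ is finitely generated, and Milnor--Schwarz together with convexity of the minimal subtree gives the quasi-isometric orbit embedding; the paper's Proposition~\ref{actionontree} instead routes through strong quasiconvexity and undistortedness of $H$, which your argument does not need.
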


Theorem~\ref{coolforgraphmanifold} can be compared with the work in \cite{MR3192368,KMT,Tran2017,Genevois2017} which study purely loxodromic subgroups in right-angled Artin groups and the work in \cite{BBL2016,MR3426695,Kim2017} which study convex cocompact subgroups in mapping class groups. We proved Theorem~\ref{coolforgraphmanifold} by showing that $(1)\implies (2) \implies (3) \implies (1)$ and $(3)\iff (4)$. The heart parts of Theorem~\ref{coolforgraphmanifold} are the implications $(3) \implies (1)$ and $(3)\implies (4)$. We note that an infinite order group element $g$ in a finitely generated group $G$ is \emph{Morse} if the cyclic subgroup $\langle g \rangle$ is strongly quasiconvex in $G$. For the proof of the implication $(3) \implies (1)$ and $(3) \implies (4)$, we consider the covering space $M_H \to M$ corresponding to the subgroup $H \le \pi_1(M)$, and then construct a \emph{Scott core} $K$ of $M_H$ (i.e. a compact codim--$0$ submanifold such that the inclusion of the submanifold into $M_H$ is a homotopy equivalence) as in \cite{NS19}. Take the universal cover $\tilde{M}$ of $M$ and take the preimage of $K$ in $\tilde{M}$ to get $\tilde{K} \subset \tilde{M}$.
Then we show that that $\tilde{K} \subset \tilde{M}$ is contracting subset in the sense of Sisto \cite{Sisto18}, and thus $\tilde{K}$ is strongly quasiconvex in $\tilde{M}$. As a consequence $H$ is strongly quasiconvex in $\pi_1(M)$. Moreover, special properties of the construction of Scott core $K$ above also allow us to get the implication $(3)\implies (4)$. We refer the reader to Section~\ref{sscfgmg} for the full proof of Theorem~\ref{coolforgraphmanifold}.

We also generalize a part of Theorem~\ref{coolforgraphmanifold} to characterize hyperbolic strongly quasiconvex subgroups in finitely generated $3$--manifold groups.

\begin{thm}
\label{coolformanifold}
Let $M$ be a $3$--manifold with finitely generated fundamental group. Let $H$ be an undistorted subgroup of $\pi_1(M)$ such that all infinite order group elements in $H$ are Morse in $G$. Then $H$ is hyperbolic and strongly quasiconvex. 
\end{thm}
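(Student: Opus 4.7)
The plan is to reduce to the situations handled by Theorem~\ref{thm:introduction2}, Theorem~\ref{coolforgraphmanifold}, and Appendix~A, and to extract hyperbolicity from the absence of $\Z^2$ subgroups in $H$. By Scott's compact core theorem we may assume $M$ is compact, and by passing to a finite-sheeted orientable cover (which preserves both undistortedness and the Morse hypothesis on elements) we may further assume $M$ is orientable. If $\partial M$ has higher-genus components we apply the filling argument of \cite{Sun18}, embedding $\pi_1(M)$ as a strongly quasiconvex subgroup of a $3$--manifold group $\pi_1(N)$ with only toral boundary, and then using Proposition~4.11 of \cite{Tran2017} to transfer the conclusion for $H \le \pi_1(N)$ back to $H \le \pi_1(M)$.

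A key preliminary observation is that the Morse hypothesis on elements rules out $\Z^2 \le H$: if $h \in H$ is infinite order and Morse, then $\langle h \rangle$ is strongly quasiconvex, which by \cite{Tran2017} has finite height and virtually cyclic normalizer in $\pi_1(M)$; the centralizer of $h$ sits inside this normalizer and so cannot accommodate a second independent commuting element. Once strong quasiconvexity of $H$ is established, the absence of $\Z^2$ together with the fact that a strongly quasiconvex subgroup of a $3$--manifold group without $\Z^2$ is hyperbolic will finish the hyperbolicity half of the theorem.

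It remains to establish strong quasiconvexity of $H$, which I would do by splitting on the sphere-disc and JSJ decompositions. If some prime factor of $M$ supports Sol geometry, then after a Bass--Serre analysis of the sphere-disc decomposition we reduce to either the virtually free case (automatic) or to $H \le \pi_1(M_i)$ with $M_i$ Sol; Appendix~A then classifies the undistorted subgroups of $\Z^2 \rtimes_\Phi \Z$ with trivial $\Z^2$-intersection as virtually cyclic and strongly quasiconvex, completing this case. If no prime factor of $M$ is Sol, the strategy is to verify finite height of $H$ in $\pi_1(M)$ and invoke Theorem~\ref{thm:introduction2}. For graph manifold $M$, Theorem~\ref{coolforgraphmanifold} applies directly, since the Morse-element hypothesis is exactly its condition~(3), and yields strong quasiconvexity (and freeness) of $H$. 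For mixed $M$, $\pi_1(M)$ is hyperbolic relative to its graph manifold, Seifert, and JSJ-torus peripherals, and Theorem~\ref{relhyp111} reduces strong quasiconvexity of $H$ to strong quasiconvexity of each peripheral intersection $H \cap gPg^{-1}$, which inherits both undistortedness and the elementwise Morse condition and is therefore handled by the graph manifold, Seifert, or trivial cases.

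The main obstacle is the mixed-manifold case, where one must verify the peripheral hypothesis of Theorem~\ref{relhyp111} directly from the elementwise Morse condition; for this I would combine the Scott-core and Sisto contracting-subset construction from the proof of Theorem~\ref{coolforgraphmanifold} with undistortedness to show that each intersection $H \cap gPg^{-1}$ is itself undistorted in $gPg^{-1}$ with all infinite order elements Morse, thereby reducing to the already-handled peripheral cases. The elementwise-to-global promotion of the Morse property is delicate and is really the heart of the argument.
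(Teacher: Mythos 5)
Your graph-manifold and mixed-manifold branches are essentially the paper's argument: the peripheral intersections $H\cap gPg^{-1}$ inherit undistortedness (Proposition~4.11 of \cite{Tran2017}, since peripherals are strongly quasiconvex) and the elementwise Morse condition (Proposition~\ref{pcool}), and Theorem~\ref{coolforgraphmanifold} handles the graph-manifold peripherals; this is exactly Corollary~\ref{corcoolcool} and Proposition~\ref{ptran}. However, two steps of your proposal are genuinely wrong or missing. First, your Sol case rests on the claim that Appendix~A shows the undistorted subgroups of $\Z^2\rtimes_{\Phi}\Z$ meeting $\Z^2$ trivially are strongly quasiconvex. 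Corollary~\ref{cor:nsq} proves the opposite: a strongly quasiconvex subgroup of $\Z^k\rtimes_{\Phi}\Z$ is trivial or of finite index, and the cyclic subgroups $\langle t^m z\rangle$ are precisely the finite-height-but-not-strongly-quasiconvex examples driving Proposition~\ref{pintro}. The correct use of the hypothesis is that Sol groups contain no Morse elements (Remark~\ref{rem:morseingeomanifold}), so a purely Morse $H$ meets every conjugate of a Sol factor trivially (and is trivial when $M$ itself is Sol); as written, your argument asserts a false lemma at this point even though the branch it is applied to turns out to be vacuous.

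Second, your announced strategy for the remaining cases, ``verify finite height of $H$ in $\pi_1(M)$ and invoke Theorem~\ref{thm:introduction2},'' is never carried out and cannot be: finite height does not follow from undistortedness plus purely Morse except by first proving strong quasiconvexity (Theorem~\ref{tran} goes the other way), so the plan is circular as stated. You bypass it for graph and mixed manifolds, but the geometric prime pieces --- Seifert, Nil, $\widetilde{SL}(2,\R)$, $\HH^2\times\R$, closed and cusped hyperbolic, and the trivial-torus-decomposition case with higher-genus boundary --- are left unhandled; the paper treats them directly via Remark~\ref{rem:morseingeomanifold}, Lemma~\ref{lem:fhgeom}, Proposition~\ref{prop:fhtorihyp}, Theorem~\ref{relhyp112}, and then transfers the property from the filled manifold $N$ to $M$ by Corollary~\ref{ccool} (for which one must also note that Morse-ness passes between $\pi_1(M)$ and $\pi_1(N)$ via Proposition~\ref{pcool}, a point you gloss over when invoking the filling). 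Finally, on hyperbolicity: the paper never needs the Geometrization-level fact that a $3$--manifold group without $\Z^2$ is hyperbolic, since it tracks stability throughout (Theorem~\ref{relhyp112}, Corollaries~\ref{corcoolcool} and~\ref{ccool}, and freeness in the graph-manifold case). Your no-$\Z^2$ observation is fine, but justify it by the finite height of $\langle h\rangle$ (as in Proposition~\ref{central}) rather than by an unstated normalizer theorem from \cite{Tran2017}, and be aware that your route imports external machinery the paper's argument avoids.
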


In contrast to the case of graph manifold, a subgroup whose all infinite order elements are Morse in a finitely generated $3$--manifold group can not be automatically strongly quasiconvex without the hypothesis undistortedness. For example, if $M$ is a closed hyperbolic $3$--manifold and $H$ is a finitely generated geometrically infinite subgroup of $\pi_1(M)$, then all infinite order group elements in $H$ are Morse in $\pi_1(M)$. However, $H$ is not strongly quasiconvex because $H$ is exponentially distorted in $\pi_1(M)$ by the Covering Theorem (see \cite{Canary96}) and the Subgroup Tameness Theorem (see \cite{Agol04} and \cite{CG06}).







We now discuss the proof of Theorem~\ref{coolformanifold}. First, we call a finitely generated subgroups $H$ of a finitely generated group $G$ \emph{purely Morse} if all infinite order elements in $H$ are Morse in $G$. By some standard arguments, we first reduce to the case where $M$ is compact, connected, orientable, irreducible and $\partial$--irreducible. When $M$ is a geometric $3$--manifold, the proof is relatively easy. When $M$ is a nongeometric $3$--manifold, then $M$ is either a graph manifold or a mixed manifold. In the first case, the proof is already given in Theorem~\ref{coolforgraphmanifold}. In the latter case, we note that $\pi_1(M)$ is relatively hyperbolic with respect to the fundamental groups of maximal graph manifold components, isolated Seifert components, and isolated JSJ tori (see \cite{Dah03} or \cite{BW13}). Thus, we first prove that if all peripheral subgroups of a relatively hyperbolic group have the property that all their undistorted purely Morse subgroups are hyperbolic and strongly quasiconvex, then so does the ambient group (see Corollary~\ref{corcoolcool}). This implies that we can prove Theorem~\ref{coolformanifold} for the case of mixed $3$--manifold by using Theorem~\ref{coolforgraphmanifold}.

If the manifold $M$ has at least a boundary component that is higher genus surface, then this case follows from a similar filling argument as in the proof of Theorem~\ref{thm:introduction2} (although some details are different).

\subsection*{Overview}
In Section~\ref{sec:background}, we review concepts finite height, strongly quasiconvex and stable subgroups in finitely generated groups. A proof of Theorem~\ref{coolforgraphmanifold} is given in Section~\ref{sscfgmg}. In Section~\ref{sec:sqcinfgmanifold}, we give complete proofs of Proposition~\ref{pintro}, Theorem~\ref{thm:introduction2}, and Theorem~\ref{coolformanifold}. In Appendix~A, we study strongly quasiconvex subgroups and finite height subgroups of abelian-by-cyclic groups $\field{Z}^k\rtimes_{\Phi} \field{Z}$.

\subsection*{Acknowledgments}
The authors are grateful for the insightful
and detailed critiques of the referee that have helped improve the exposition
of this paper. The authors especially appreciate the referee for pointing out
a mistake in Theorem~1.4 in the earlier version.
H. T. was supported by an AMS-Simons Travel Grant. W. Y. is supported by the National Natural Science Foundation of China (No. 11771022).

\section{Preliminaries}
\label{sec:background}
In this section, we review concepts finite height subgroups, strongly quasiconvex subgroups, stable subgroups, Morse elements, purely Morse subgroups, and their basic properties that will be used in this paper.

\subsection{Finite height subgroups, malnormal subgroups, and their properties}
\begin{defn}
Let $G$ be a group and $H$ a subgroup of $G$. Then 
\begin{enumerate}
\item Conjugates $g_1Hg_1^{-1}, \cdots g_kHg_k^{-1} $ are \emph{essentially distinct} if the cosets $g_1H,\cdots,g_kH$ are distinct.
\item $H$ has height at most $n$ in $G$ if the intersection of any $(n+1)$ essentially distinct conjugates is finite. The least $n$ for which this is satisfied is called the \emph{height} of $H$ in $G$.
\item $H$ is \emph{almost malnormal} in $G$ if $H$ has the height at most $1$ in $G$. 
\item $H$ is \emph{malnormal} in $G$ if for each $g\in G-H$ the subgroup $gHg^{-1}\cap H$ is trivial.
\end{enumerate}
We observe that a malnormal subgroup is always almost malnormal. Moreover, if $G$ is a torsion-free group, then every almost malnormal subgroup of $G$ is malnormal.
\end{defn}

The following proposition provides some basic properties of finite height subgroups. We will use these properties many times for studying finite height subgroups of $3$--manifold groups.

\begin{prop}
\label{p0}
Let $G$ be a group and $H$ a subgroup. Then:
\begin{enumerate}
    \item If $H$ has finite height in $G$ and $G_1$ is a subgroup of $G$, then $H\cap G_1$ has finite height in $G_1$.
    \item If $G_1$ is a finite index subgroup of $G$ and $H\cap G_1$ has finite height in $G_1$, then $H$ has finite height in $G$.
    \item If $H$ is a finite height subgroup of $G$ and $K$ is a finite height subgroup of $H$, then $K$ has finite height in $G$.
    \item If $H_1$, $H_2$ are two finite height subgroup of $G$, then $H_1\cap H_2$ has finite height in $G$, $H_1$, and $H_2$.
\end{enumerate}
\end{prop}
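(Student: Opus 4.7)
My plan is to prove the four statements in order, using only the definition of finite height together with elementary pigeonhole arguments; part (4) will be a direct consequence of (1) and (3). Throughout, to bound the height of a subgroup I take an arbitrary collection of essentially distinct conjugates and show that once the collection is large enough, their intersection must be finite.

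For (1), the crucial observation is that if $g_1,\dots,g_{n+1}\in G_1$ and the cosets $g_i(H\cap G_1)$ are distinct in $G_1$, then the cosets $g_iH$ are automatically distinct in $G$: the relation $g_j^{-1}g_i\in H$ together with $g_j^{-1}g_i\in G_1$ forces $g_j^{-1}g_i\in H\cap G_1$. So with $n$ the height of $H$ in $G$, the intersection $\bigcap_i g_i(H\cap G_1)g_i^{-1}$ sits inside $\bigcap_i g_iHg_i^{-1}$, which is finite; hence $H\cap G_1$ has height at most $n$ in $G_1$.

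Part (2) is the only step that needs real work, and the main obstacle is that for $g\notin G_1$ the identity $gHg^{-1}\cap G_1 = g(H\cap G_1)g^{-1}$ need not hold. I would overcome this by first replacing $G_1$ by its normal core $N = \bigcap_{g\in G}gG_1g^{-1}$, which is still a finite-index subgroup of $G$, and by (1) applied inside $G_1$ the intersection $H\cap N$ still has finite height in $N$. So I may assume $G_1\triangleleft G$. Now, given many essentially distinct cosets $g_iH$ in $G$, I write $g_i = x_{a_i}y_i$ with $x_{a_i}$ running over a fixed set of coset representatives for $G/G_1$ and $y_i\in G_1$. A pigeonhole on the index $a_i$ yields many indices sharing a common value $a_i=a$; distinctness of the $g_iH$ translates into distinctness of the cosets $y_i(H\cap G_1)$ in $G_1$, and the identity above lets me identify the intersection of the corresponding $g_iHg_i^{-1}\cap G_1$ with a conjugate of an intersection of $(H\cap G_1)$-conjugates, which is finite by hypothesis. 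Since $G_1$ has finite index in $G$, the full intersection is finite as well.

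For (3), with $H$ of height $n$ in $G$ and $K$ of height $p$ in $H$, I take essentially distinct cosets $g_1K,\dots,g_NK$ with $N$ large, and split on how many of the cosets $g_iH$ are distinct. If at least $n+1$ of them are distinct, the intersection of the corresponding $g_iHg_i^{-1}$ is finite and dominates the intersection of the $g_iKg_i^{-1}$. Otherwise, pigeonhole forces at least $p+1$ of the $g_i$ to lie in a single coset, say $g_1H$, and writing $g_i=g_1h_i$ with $h_i\in H$, the distinctness of the $g_iK$ translates to distinctness of the $h_iK$ in $H$, so the height bound on $K$ in $H$ finishes this case. Finally, (4) follows by applying (1) with $G_1=H_1$ and $H=H_2$ to conclude that $H_1\cap H_2$ has finite height in $H_1$, and then (3) to transfer this finite height along the chain $H_1\cap H_2\le H_1\le G$; the symmetric argument handles $H_2$, and finite height in $H_1$, $H_2$ individually is immediate from (1).
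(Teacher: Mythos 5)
Your proof is correct, and parts (1), (3) and (4) follow the paper's argument essentially verbatim: the same coset-distinctness observation for (1), the same split-and-pigeonhole on $H$-cosets for (3), and (4) as a formal consequence of (1) and (3). The only divergence is in part (2), where you pass to the normal core $N$ of $G_1$ so that the identity $gHg^{-1}\cap N = g(H\cap N)g^{-1}$ holds for every $g\in G$. The paper avoids this reduction: with $[G:G_1]=k$ and $H\cap G_1$ of height at most $m$ in $G_1$, it pigeonholes $km+1$ cosets $g_iH$ into the $k$ cosets of $G_1$ to find $m+1$ elements $g_{\ell(i)}=gk_i$ with $k_i\in G_1$, observes exactly as you do that the cosets $k_i(H\cap G_1)$ are distinct in $G_1$, so $\bigcap_i k_i(H\cap G_1)k_i^{-1}$ is finite, and then upgrades this to finiteness of $\bigcap_i k_iHk_i^{-1}$ using only that $H\cap G_1$ has finite index in $H$; equivalently, since the conjugators $k_i$ that actually occur lie in $G_1$, one has $k_i(H\cap G_1)k_i^{-1}=k_iHk_i^{-1}\cap G_1$, so the identity you were worried about holds automatically where it is needed. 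Thus the obstacle you flag is circumvented rather than removed: your normal-core detour is harmless and makes the conjugation identity available globally, at the cost of an extra reduction and an appeal to part (1), while the paper's version is slightly more economical and gives the explicit height bound $km$. Both arguments are complete.
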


\begin{proof}
We first prove Statement (1). Assume that the height of $H$ in $G$ is at most $n$. Then the intersection of any $(n+1)$ essentially distinct conjugates of $H$ in $G$ is finite. Let $H_1=H\cap G_1$ and let $g_1H_1, g_2H_1, \cdots g_nH_1, g_{n+1}H_1$ be $(n+1)$ distinct left cosets of $H_1$ in $G_1$. It is straight forward that $g_iH\neq g_jH$ for $i\neq j$. Therefore, $\cap g_iHg^{-1}_i$ is finite. This implies that $\cap g_iH_1g^{-1}_i$ is also finite. Therefore, the height of $H_1$ in $G_1$ is also at most $n$. 

We now prove Statement (2). Assume the index of $G_1$ in $G$ is $k$. Let $H_1=H\cap G_1$. Since $H_1$ has finite height in $G_1$, there is a number $m$ such that the intersection of any $(m+1)$ essentially distinct conjugates of $H_1$ in $G_1$ is finite. Let $n=km$ and we will prove that the height of $H$ in $G$ is at most $n$. In fact, let $\mathcal{A}=\{g_1H,g_2H,\cdots, g_nH, g_{n+1}H\}$ be a collection of $(n+1)$ distinct left cosets of $H$ in $G$. Then there is $(m+1)$ left cosets in $\mathcal{A}$ (called $g_{\ell(1)}H, g_{\ell(2)}H,\cdots, g_{\ell(m)}H, g_{\ell(m+1)}H$ such that $g_{\ell(i)}$ lies in the same left coset $gG_1$. Therefore, $g_{\ell(i)}=gk_i$ for some $k_i \in G_1$. It is straightforward that $k_iH_1\neq k_jH_1$ for $i\neq j$. Therefore, $\cap k_iH_1k^{-1}_i$ is finite. Since $H_1$ is of finite index in $H$, the intersection $\cap k_iHk^{-1}_i$ is also finite. Therefore, $\cap g_{\ell(i)}Hg^{-1}_{\ell(i)}=g\bigl(\cap k_iHk^{-1}_i\bigr)g^{-1}$ is finite. This implies that the height of $H$ in $G$ is at most $m$.

We now prove the Statement (3). Assume that the height of $H$ in $G$ is at most $n$ and the height of $K$ in $H$ is at most $m$. Let $k=mn+1$ We will prove that the height of $K$ in $G$ is at most $k$. Let $g_1K, g_2K, \cdots g_kK, g_{k+1}K$ be $(k+1)$ distinct left cosets of $K$ in $G$. If $\mathcal{A}=\{g_1H,g_2H,\cdots, g_kH, g_{k+1}H\}$ contains more than $n$ distinct left cosets of $H$ in $G$, then $\cap g_iHg^{-1}_i$ is finite and therefore $\cap g_iKg^{-1}_i$ is also finite. Otherwise, $\mathcal{A}=\{g_1H,g_2H,\cdots, g_kH, g_{k+1}H\}$ contains at most $n$ distinct left cosets of $H$ in $G$ and therefore there is a group element $g$ in $G$ and $(m+1)$ distinct elements $g_{\ell(1)}, g_{\ell(2)},\cdots, g_{\ell(m)}, g_{\ell(m+1)}$ in $\{g_1,g_2,\cdots,g_k\}$ such that $g_{\ell(i)}=gh_i$ for some $h_i\in H$. Since the height of $K$ in $H$ is at most $m$ and $\mathcal{B}=\{h_1K,h_2K,\cdots, h_mK, h_{m+1}K\}$ is a collection of $(m+1)$ distinct left cosets of $K$ in $H$, the intersection $\cap h_iKh^{-1}_i$ is finite. This implies that $\cap g_{\ell(i)}Kg^{-1}_{\ell(i)}=g\bigl(\cap h_iKh^{-1}_i\bigr)g^{-1}$ is also finite and therefore $\cap g_iKg^{-1}_i$ is finite. Thus $K$ has finite height in $G$. Statement (4) is obtained from Statement (1) and Statement (3). 
\end{proof}

Finite subgroups and finite index subgroups always have finite height in the ambient group. On the other hand, the following proposition provides certain groups whose finite height subgroups are either finite or has finite index in the ambient groups. This proposition will help us study finite height subgroups of almost all geometric manifold groups (except hyperbolic manifold groups and Sol manifold groups) and Seifert manifold groups. 

\begin{prop}
\label{central}
Let $G$ be a group such that the centralizer $Z(G)$ of $G$ is infinite. Let $H$ be a finite height infinite subgroup of $G$. Then $H$ must have finite index in $G$.
\end{prop}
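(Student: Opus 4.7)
The plan is to argue by contradiction, assuming $[G:H] \ge n+1$ where $n$ is the height of $H$, and then exhibit $n+1$ essentially distinct conjugates of $H$ whose intersection is infinite. The key observation driving everything is that any $z \in Z(G)$ satisfies $zHz^{-1} = H$, so central conjugation is trivial on $H$, yet distinct central elements may still produce distinct cosets $zH$ and hence ``essentially distinct'' conjugates in the sense of the definition.

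I would split based on the size of the intersection $F := Z(G) \cap H$. In the first case, suppose $F$ is infinite. Since $F$ is central, for every $g \in G$ we have $gFg^{-1} = F \subseteq H$, so $F$ is contained in every conjugate $gHg^{-1}$. If $[G:H]$ were infinite we could pick $n+1$ distinct cosets $g_1 H, \ldots, g_{n+1}H$; the corresponding conjugates are essentially distinct, and their intersection contains the infinite group $F$, contradicting that $H$ has height at most $n$.

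In the second case, suppose $F$ is finite. Since $Z(G)$ is infinite, the quotient $Z(G)/F$ is infinite, so we can choose $z_1, \ldots, z_{n+1} \in Z(G)$ lying in pairwise distinct cosets of $F$. For $i \neq j$ the element $z_i^{-1} z_j$ lies in $Z(G)$ but not in $F = Z(G) \cap H$, hence not in $H$, so the cosets $z_i H$ are pairwise distinct. On the other hand, because each $z_i$ is central, $z_i H z_i^{-1} = H$. Thus these $n+1$ essentially distinct conjugates are literally equal to $H$, and their intersection is the infinite group $H$, again contradicting the finite height hypothesis.

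Combining both cases shows that the assumption $[G:H] \ge n+1$ is untenable, so in fact $[G:H] \le n$, and $H$ has finite index in $G$. The main conceptual point—and the only place where any care is needed—is the second case: one must resist the intuition that ``essentially distinct'' conjugates should be literally distinct as subgroups. Once one accepts that centrality allows the cosets $z_iH$ to differ while all conjugates coincide, the argument is straightforward bookkeeping with the definition of height.
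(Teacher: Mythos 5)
Your proof is correct and uses essentially the same two mechanisms as the paper: central elements $z$ give essentially distinct conjugates $zHz^{-1}$ that all equal $H$, and an infinite $Z(G)\cap H$ lies in every conjugate $gHg^{-1}$. The only difference is cosmetic — you split on whether $Z(G)\cap H$ is finite or infinite, while the paper splits on whether it has finite index in $Z(G)$ — so the argument matches the paper's proof.
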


\begin{proof}
We first assume that $Z(G) \cap H$ has infinite index in $Z(G)$. Then there is an infinite sequence $(t_n)$ of elements in $Z(G)$ such that $t_i(Z(G) \cap H)\neq t_j(Z(G) \cap H)$ for $i\neq j$. Therefore, it is straightforward that $t_iH\neq t_jH$ for $i\neq j$. Also, $\bigcap t_nHt_n^{-1}=H$ is infinite. This contradicts to the fact that $H$ has finite height. Therefore, $Z(G) \cap H$ has finite index in $Z(G)$, In particular, $Z(G) \cap H$ is infinite. Assume that $H$ has infinite index in $G$. Then there is an infinite sequence $\{g_nH\}$ of distinct left cosets of $H$. However, $\bigcap g_nHg_n^{-1}$ is infinite since it contains the infinite subgroup $Z(G) \cap H$. This contradicts to the fact that $H$ has finite height. Therefore, $H$ must have finite index in $G$.
\end{proof}

We now discuss how a finite height subgroup interacts with a normal subgroup with certain properties (see Corollary~\ref{cor0}). This result will be used to study finite height subgroups in abelian-by-cyclic groups $\field{Z}^k\rtimes_{\Phi} \field{Z}$ in Appendix A. 

\begin{prop}[Proposition A.1 in \cite{2017arXiv170309032C}]
\label{fhimpliessq}
Let $G$ be a group and suppose there is a collection $\mathcal{A}$ of subgroups of $G$ that satisfies the following conditions:
\begin{enumerate}
\item For each $A$ in $\mathcal{A}$ and $g$ in $G$ the conjugate $g^{-1}Ag$ also belongs to $\mathcal{A}$ and there is a finite sequence $$A=A_0, A_1, \cdots, A_n=g^{-1}Ag$$ of subgroups in $\mathcal{A}$ such that $A_{j-1}\cap A_j$ is infinite for each $j$;
\item For each $A$ in $\mathcal{A}$ each finite height subgroup of $A$ must be finite or have finite index in $A$.
\end{enumerate}
Then for each infinite index finite height subgroup $H$ of $G$ the intersection $H\cap A$ must be finite for all $A$ in $\mathcal{A}$.
\end{prop}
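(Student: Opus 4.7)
The plan is to argue by contradiction. Suppose $H$ has infinite index and finite height in $G$, yet there exists some $A\in\mathcal{A}$ for which $H\cap A$ is infinite. I will use hypotheses (1) and (2) to propagate this ``bigness'' of $H$ inside conjugates of $A$ all over $G$, and then manufacture an infinite intersection of arbitrarily many essentially distinct conjugates of $H$ to contradict finite height.

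The first step is a direct application of the hypotheses to $A$ itself. By Proposition~\ref{p0}(1) the subgroup $H\cap A$ has finite height in $A$. Since it is assumed infinite, hypothesis (2) forces $H\cap A$ to have finite index in $A$. The next step is to extend this to every conjugate $g^{-1}Ag$. Fix $g\in G$ and choose a chain $A=A_0,A_1,\ldots,A_n=g^{-1}Ag$ in $\mathcal{A}$ provided by hypothesis~(1), so that $A_{j-1}\cap A_j$ is infinite for each $j$. I argue inductively that $H\cap A_j$ has finite index in $A_j$: having this for $A_{j-1}$, the standard intersection bound gives that $H\cap A_{j-1}\cap A_j$ has finite index in $A_{j-1}\cap A_j$, and since the latter is infinite so is $H\cap A_j$; then Proposition~\ref{p0}(1) plus hypothesis~(2) applied to $A_j$ upgrade ``infinite'' to ``finite index in $A_j$''. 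Conjugating the endpoint conclusion by $g$, one obtains that $A\cap gHg^{-1}$ has finite index in $A$ for every $g\in G$.

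The final step produces the contradiction. Let $n$ be the height of $H$ in $G$. Because $H$ has infinite index in $G$, there exist $n+1$ essentially distinct cosets $g_1H,\ldots,g_{n+1}H$. By the previous step each $A\cap g_iHg_i^{-1}$ has finite index in $A$, so the common intersection
\[
A\cap\bigcap_{i=1}^{n+1}g_iHg_i^{-1}=\bigcap_{i=1}^{n+1}\bigl(A\cap g_iHg_i^{-1}\bigr)
\]
is a finite intersection of finite-index subgroups of $A$, hence has finite index in $A$. Since $A$ is infinite (it contains $H\cap A$, which is infinite), this intersection is infinite, and therefore $\bigcap_{i=1}^{n+1}g_iHg_i^{-1}$ is infinite, contradicting that $H$ has height $n$.

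The only part that requires care is the chain induction in the propagation step: one has to verify simultaneously that the new intersection $H\cap A_j$ is infinite (so hypothesis~(2) kicks in) and that the finite-index property is preserved along the chain. Both are handled by the elementary fact that finite-index intersects any subgroup at finite index, combined with the key feature of hypothesis~(1) that consecutive $A_{j-1}\cap A_j$ are infinite. Everything else is routine manipulation of cosets and of the definition of height.
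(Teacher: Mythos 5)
Your proof is correct. Note that the paper itself does not supply an argument for this statement — it is quoted as Proposition A.1 of the cited reference — so there is no in-paper proof to compare against; your chain-propagation argument (establish that $H\cap A_0$ has finite index in $A_0$ via Proposition~\ref{p0}(1) and hypothesis (2), propagate along the chain using that consecutive intersections $A_{j-1}\cap A_j$ are infinite and that a finite-index subgroup meets any subgroup in finite index, then intersect $n+1$ conjugates $A\cap g_iHg_i^{-1}$ to contradict finite height) is the natural and standard proof of this fact, and every step, including the simultaneous bookkeeping of ``infinite'' and ``finite index'' in the induction, is valid.
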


\begin{cor}
\label{cor0}
Let $G$ be a group and $H$ a finite height subgroup of infinite index. Let $N$ be a normal subgroup of $G$ such that each finite height subgroup of $N$ must be finite or have finite index in $N$. Then the intersection $H\cap N$ must be finite.
\end{cor}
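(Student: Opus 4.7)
The plan is to deduce Corollary~\ref{cor0} directly from Proposition~\ref{fhimpliessq} by choosing the singleton collection $\mathcal{A} = \{N\}$. First I would dispose of the trivial case in which $N$ itself is finite, where the conclusion $H \cap N$ finite is immediate. So assume $N$ is infinite.

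Next I would verify the two hypotheses of Proposition~\ref{fhimpliessq} for this $\mathcal{A}$. Condition (1) requires that every conjugate of every member of $\mathcal{A}$ lies back in $\mathcal{A}$ and is connected to the original by a finite chain of members whose consecutive intersections are infinite. Since $N$ is normal in $G$, we have $g^{-1}Ng = N$ for every $g \in G$, so the trivial one-term chain $N = N$ already works (and if one insists on a nondegenerate chain, $N, N$ works since $N \cap N = N$ is infinite by assumption). Condition (2) requires that every finite height subgroup of every member of $\mathcal{A}$ is either finite or of finite index; this is exactly the hypothesis imposed on $N$.

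With both hypotheses verified, Proposition~\ref{fhimpliessq} applies to the infinite index finite height subgroup $H$ of $G$, and yields that $H \cap A$ is finite for every $A \in \mathcal{A}$. Taking $A = N$ gives the desired conclusion that $H \cap N$ is finite.

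There is no real obstacle here, since everything needed is packaged into Proposition~\ref{fhimpliessq}; the only mild care is to notice that the normality of $N$ is what collapses the ``chain of conjugates with infinite intersections'' condition into a triviality, making the singleton collection $\{N\}$ admissible.
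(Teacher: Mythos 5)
Your proposal is correct and matches the paper's argument, which likewise applies Proposition~\ref{fhimpliessq} to the singleton collection $\mathcal{A}=\{N\}$, with normality of $N$ trivializing the conjugation/chain condition. The extra remarks (the finite-$N$ case and the one-term chain) are harmless elaborations of the same proof.
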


\begin{proof}
We use Proposition~\ref{fhimpliessq} for the case $\mathcal{A}$ consists of only element $N$.
\end{proof}

The following proposition studies certain property to finite height subgroups of certain graphs of groups. This proposition will be used to study finite height subgroups in graph manifold groups.

\begin{prop}
\label{prop:tree}
Assume a group $G$ is decomposed as a finite graph $T$ of groups that satisfies the following.

\begin{enumerate}
    \item For each vertex $v$ of $T$ each finite height subgroup of vertex group $G_v$ must be finite or have finite index in $G_v$.
    \item Each edge group is infinite.
\end{enumerate}
Then, if $H$ is a finite height subgroup of G of infinite index, then $H \cap gG_vg^{-1}$ is finite for each vertex group $G_v$ and each group element $g$. In particular, if $H$ is torsion-free, then $H$ is a free group.
\end{prop}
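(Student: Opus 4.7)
The plan is to apply Proposition~\ref{fhimpliessq} with $\mathcal{A}$ taken to be the collection of all conjugates of vertex groups, i.e.\ $\mathcal{A} = \bigset{gG_vg^{-1}}{g \in G,\ v \in V(T)}$. Condition (2) of Proposition~\ref{fhimpliessq} then follows immediately from hypothesis (1) of the present proposition, since any element of $\mathcal{A}$ is isomorphic via conjugation to some vertex group $G_v$, and the property ``every finite height subgroup is finite or of finite index'' is preserved under isomorphism.

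The key step is verifying condition (1) of Proposition~\ref{fhimpliessq}, and here I would use the action of $G$ on the Bass--Serre tree $\widetilde{T}$ associated with the graph of groups decomposition. Vertex stabilizers for this action are exactly the conjugates of vertex groups (hence elements of $\mathcal{A}$), and edge stabilizers are conjugates of edge groups, which are infinite by hypothesis (2). Given $A \in \mathcal{A}$ and $g \in G$, I pick vertices $x, y \in \widetilde{T}$ with $\Stab_G(x) = A$ and $\Stab_G(y) = g^{-1}Ag$. Along the unique geodesic $x = x_0, x_1, \ldots, x_n = y$ in $\widetilde{T}$, set $A_i := \Stab_G(x_i)$. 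Each $A_i$ lies in $\mathcal{A}$, and $A_{i-1} \cap A_i$ contains the stabilizer of the edge between $x_{i-1}$ and $x_i$, which is an infinite conjugate of an edge group. This produces the required chain, so Proposition~\ref{fhimpliessq} applies and yields that $H \cap gG_vg^{-1}$ is finite for every vertex $v$ and every $g \in G$.

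For the ``in particular'' clause, assume $H$ is torsion-free. Each finite subgroup $H \cap gG_vg^{-1}$ is then trivial, so the restricted action of $H$ on $\widetilde{T}$ has trivial vertex stabilizers. A torsion-free group acting on a tree with trivial vertex stabilizers cannot invert any edge, because an inverter $g$ would satisfy $g^2 \in \Stab_G(x) \cap H = \{1\}$, forcing $g = 1$ by torsion-freeness. Hence $H$ acts freely and without inversions on the tree $\widetilde{T}$, and so $H$ is a free group by the standard Bass--Serre criterion.

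I expect the only genuinely substantive step to be the verification of condition (1) of Proposition~\ref{fhimpliessq}; once one passes to the Bass--Serre tree and uses that edge stabilizers are infinite, the chain of subgroups with infinite pairwise intersection is essentially forced. The rest is routine bookkeeping.
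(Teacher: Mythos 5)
Your proposal is correct and follows essentially the same route as the paper: both apply Proposition~\ref{fhimpliessq} to the collection of all conjugates of vertex groups, verify its condition (1) by taking a path in the Bass--Serre tree and using that edge stabilizers are infinite conjugates of edge groups, and then deduce freeness of a torsion-free $H$ from the resulting free action on the tree. Your treatment is slightly more explicit than the paper's (spelling out the chain of stabilizers along a geodesic and ruling out edge inversions, which is automatic for the Bass--Serre action), but the argument is the same.
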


\begin{proof}
Let $\mathcal{A}$ be the collection of all conjugates of vertex groups in the decomposition of $G$. Then $\mathcal{A}$ satisfies Condition (2) in Proposition~\ref{fhimpliessq} by the hypothesis. Let $\tilde{T}$ be the Bass-Serre tree of the decomposition. Then conjugates of vertex groups (resp. edge groups) correspond to vertices (resp. edges) of $\tilde{T}$. Since $\tilde{T}$ is connected and each edge group is infinite, the collection $\mathcal{A}$ also satisfies Condition (1) in Proposition~\ref{fhimpliessq}. Therefore, if $H$ is a finite height subgroup of G of infinite index, then $H \cap gG_vg^{-1}$ is finite for each vertex group $G_v$ and each group element $g$. 

We now further assume that $H$ is torsion-free. Then $H \cap gG_{v}g^{-1}$ is trivial for each vertex group $G_v$ and each element $g \in G$. Also, we know that $G$ acts on the Bass-Serre tree $\tilde{T}$ such that the stabilizer of a vertex of $T$ is a conjugate of a vertex group. Therefore, $H$ acts freely on the Bass-Serre tree $\tilde{T}$. This implies that $H$ is a free group.
\end{proof}

\subsection{Quasiconvex subgroups, strongly quasiconvex subgroups, and stable subgroups}

We first discuss the concepts of quasiconvex subsets and strongly quasiconvex subsets in a geodesic space. These concepts are the foundation for the concepts of quasiconvex subgroups and strongly quasiconvex subgroups in a finitely generated group.

\begin{defn} [(Strongly) quasiconvex subsets]
Let $X$ be a geodesic space and let $Y$ be a subset of $X$. The subset $Y$ is \emph{quasiconvex} in $X$ if there is a constant $D>0$ such that every geodesic with endpoints on $Y$ is contained in the $D$--neighborhood of $Y$. The subset $Y$ is \emph{strongly quasiconvex} if for every $K \geq 1,C \geq 0$ there is some $M = M(K,C)$ such that every $(K,C)$--quasi–geodesic with endpoints on $Y$ is contained in the $M$--neighborhood of $Y$. 
\end{defn}

It follows directly from the definition that strong quasiconvexity is a quasi-isometry invariant in the following sense.

\begin{lem}
	\label{lem:quasiconvex_qi_invariant} 
	Let $X$ and $Z$ be a geodesic metric spaces and $f\colon X \rightarrow Z$ be a quasi-isometry. If $Y$ is a strongly quasiconvex subset of $X$, then $f(Y)$ is a strongly quasiconvex subset of $Z$.
\end{lem}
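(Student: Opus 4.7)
The plan is to pull quasi-geodesics in $Z$ back to quasi-geodesics in $X$ via a quasi-inverse of $f$, invoke strong quasiconvexity of $Y$ in $X$, and then push forward. Concretely, fix $K \geq 1$ and $C \geq 0$, and let $\gamma\colon [a,b] \to Z$ be any $(K,C)$--quasi-geodesic with endpoints in $f(Y)$. Choose a quasi-inverse $\bar f\colon Z \to X$ of $f$; both $f$ and $\bar f$ are $(\lambda,\epsilon)$--quasi-isometries for some $\lambda \geq 1$, $\epsilon \geq 0$, and $f\circ \bar f$, $\bar f \circ f$ are within distance $\epsilon$ of the respective identities.

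The first step is to observe that $\bar f \circ \gamma$ is a $(K',C')$--quasi-geodesic in $X$ for constants $K',C'$ depending only on $K,C,\lambda,\epsilon$; a routine two-line computation using the quasi-isometry inequalities for $\bar f$ gives explicit formulas. Its endpoints are of the form $\bar f(f(y_i))$ with $y_i \in Y$, hence within $\epsilon$ of points of $Y$. I would then prepend and append geodesic segments of length at most $\epsilon$ from $y_1$ to $\bar f(f(y_1))$ and from $\bar f(f(y_2))$ to $y_2$; the concatenation is still a $(K'',C'')$--quasi-geodesic in $X$ with both endpoints in $Y$, where again $K'',C''$ depend only on $K,C,\lambda,\epsilon$.

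Now strong quasiconvexity of $Y$ in $X$ yields $M = M(K'',C'')$ such that this concatenated quasi-geodesic, and in particular its middle piece $\bar f \circ \gamma$, lies in $\nbd{Y}{M}$. Applying $f$ and using the quasi-isometry inequality for $f$, the image $f(\bar f(\gamma([a,b])))$ lies in $\nbd{f(Y)}{\lambda M + \epsilon}$. Since $f \circ \bar f$ is at distance at most $\epsilon$ from $\mathrm{id}_Z$, every point of $\gamma$ is within $\epsilon$ of $f(\bar f(\gamma(t)))$, so $\gamma$ is contained in $\nbd{f(Y)}{\lambda M + 2\epsilon}$. Setting $M'(K,C) := \lambda M(K'',C'') + 2\epsilon$ verifies the definition of strong quasiconvexity for $f(Y)$ in $Z$.

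The only potential pitfall is to confirm that the quasi-geodesic constants $K'',C''$ after the two modifications (composing with $\bar f$, then attaching short geodesic caps) depend only on $K,C$ and the fixed data of $f,\bar f$, and not on the particular quasi-geodesic $\gamma$; but this is a standard bookkeeping exercise, not a genuine obstacle. No deeper idea seems to be required.
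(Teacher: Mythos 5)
Your argument is correct; the paper itself gives no proof of this lemma (it merely remarks that it ``follows directly from the definition''), and your pull-back/push-forward argument via a quasi-inverse $\bar f$ is exactly the standard verification being alluded to. The bookkeeping you flag --- that composing with $\bar f$ and attaching endpoint caps of length at most $\epsilon$ only changes the quasi-geodesic constants in a way depending on $K$, $C$, $\lambda$, $\epsilon$, so that $M'(K,C)=\lambda M(K'',C'')+2\epsilon$ is well defined --- is indeed routine and poses no gap.
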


Quasiconvexity is not a quasi-isometry invariant but it is equivalent to strong quasiconvexity in the settings of hyperbolic spaces. We now define quasiconvex subgroups, strongly quasiconvex subgroups, and Morse elements in a finitely generated group.

\begin{defn}[Quasiconvex subgroups, strongly quasiconvex subgroups, and stable subgroups] 
Let $G$ be a finitely generated group and $H$ a subgroup of $G$. We say $H$ is \emph{quasiconvex} in $G$ with respect to some finite generating set $S$ of $G$ if $H$ is a quasiconvex subset in the Cayley graph $\Gamma(G,S)$. We say $H$ is \emph{strongly quasiconvex} in $G$ if $H$ is a strongly quasiconvex subset in the Cayley graph $\Gamma(G,S)$ for some (any) finite generating set $S$. We say $H$ is \emph{stable} in $G$ if $H$ is strongly quasiconvex and hyperbolic.
\end{defn}

\begin{rem}
If $H$ is a quasiconvex subgroup of a group $G$ with respect to some finite generating set $S$, then $H$ is also finitely generated and undistorted in $G$ (see Lemma 3.5 of Chapter III.$\Gamma$ in \cite{MR1744486}). However, we emphasize that the concept of quasiconvex subgroups depends on the choice of finite generating set of the ambient group.

The strong quasiconvexity of a subgroup does not depend on the choice of finite generating sets by Lemma~\ref{lem:quasiconvex_qi_invariant}. It is clear that a strongly quasiconvex subgroup is also quasiconvex with respect to some (any) finite generating set of the ambient group. In particular, each strongly quasiconvex subgroup is finitely generated and undistorted in the ambient group (see Theorem~1.2 in \cite{Tran2017}).

Finally, we would like to emphasize that the above definition of stable subgroup is equivalent to the definition originally given by Durham and Taylor in \cite{MR3426695}. We refer the reader to the work of the second author~\cite{Tran2017} to see the proof of the equivalence. 
\end{rem}

In the following theorem, we review a result proved by the second author \cite{Tran2017} that a strongly quasiconvex subgroup always has finite height. 

\begin{thm}[Theorem 1.2 in \cite{Tran2017}]
\label{tran}
Let $G$ be a finitely generated group and $H$ a strongly quasiconvex subgroup of $G$. Then $H$ is finitely generated and has finite height in $G$.
\end{thm}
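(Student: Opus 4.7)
The plan is to handle the two conclusions of the theorem separately. The finite generation is a routine application of the quasiconvex subgroup argument: strong quasiconvexity applied to the trivial quasi-geodesic parameters $(L,C)=(1,0)$ yields a constant $R$ such that every geodesic in $\Gamma(G,S)$ between two points of $H$ lies in the $R$-neighborhood of $H$, which is ordinary quasiconvexity. The finite set
\[
T := \{\, t \in H : |t|_S \le 2R+1 \,\}
\]
then generates $H$: given $h \in H$, sample a geodesic from $1$ to $h$ at unit intervals, approximate each sample by a closest element of $H$ (within distance $R$), and factor $h$ as the product of the resulting bounded-length consecutive differences, which all lie in $T$. The same count gives $|h|_T \le |h|_S$, so $H$ is undistorted in $G$.

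For finite height, the plan is to argue by contradiction: if $H$ has infinite height, then for arbitrarily large $n$ there exist essentially distinct conjugates $g_1 H g_1^{-1}, \dots, g_n H g_n^{-1}$ with infinite intersection. Since $H$ is infinite and undistorted, the intersection contains elements of arbitrarily large word length in $G$, and after a standard reduction one fixes an infinite-order element $h$ in it. For each $i$, the element $y_i := g_i^{-1} h g_i$ lies in $H$, so the bi-infinite sequences $\{h^k\}_{k \in \Z} \subset H$ and $\{h^k g_i\}_{k \in \Z} \subset g_i H$ are uniform quasi-geodesics in $G$ that differ by right translation by $g_i$. Concatenating these two parallel quasi-geodesics at the ends with length-$|g_i|$ vertical segments produces, for each $N$, a $(1, 2|g_i|)$-quasi-geodesic from $h^{-N}$ to $h^N$ passing through $g_i$; applying strong quasiconvexity of $H$, and exploiting the $h$-translation symmetry along the axis together with a limit in $N$, one extracts a uniform bound $d(g_i H, H) \le D$ independent of $i$.

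Once all cosets $g_i H$ lie in the $D$-neighborhood of $H$, and hence pairwise within distance $2D$, the final step invokes a bounded packing property for strongly quasiconvex subgroups---proved within the same geometric framework---which bounds the number of distinct cosets inside any fixed neighborhood of $H$. This bound contradicts the unboundedness of $n$ and establishes finite height. The main obstacle in this plan is ensuring that the constant $D$ is truly uniform in $g_i$: the Morse function $R(L,C)$ supplied by strong quasiconvexity increases with $C=2|g_i|$, so the naive application of the rectangle estimate does not suffice. This is overcome by the translation symmetry of the axis $\{h^k\}$, which allows the estimate to be applied with its midpoint shifted along the axis, reducing the dependence to data intrinsic to $h$ and $H$. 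A secondary technical point is extracting an infinite-order element $h$ in the intersection, which can be handled by passing to a finite-index torsion-free subgroup of $H$ via Proposition~\ref{p0}, or directly by working with sequences of elements of increasing word length in place of a single axis.
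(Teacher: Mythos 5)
This theorem is not proved in the paper at all: it is quoted verbatim from Theorem~1.2 of \cite{Tran2017}, so there is no internal argument to compare yours against, and your proposal has to stand on its own. Its first half does: taking $(L,C)=(1,0)$ gives ordinary quasiconvexity with some constant $R$, and the subdivision argument along a geodesic shows $H$ is generated by its elements of $S$-length at most $2R+1$ and is undistorted. That part is correct and standard.

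The finite-height half has genuine gaps, starting with the axis. First, an infinite subgroup of a finitely generated group need not contain an infinite-order element, and your proposed fix of passing to a torsion-free finite-index subgroup of $H$ is unavailable: nothing makes a strongly quasiconvex subgroup virtually torsion-free. Second, even when an infinite-order $h$ exists in the intersection, $\{h^k\}$ need not be a quasi-geodesic in $G$: strong quasiconvexity makes $H$ undistorted, but cyclic subgroups of $H$ can still be distorted in $G$ (this is exactly why the present paper treats ``purely Morse'' as a nontrivial extra hypothesis). Without an undistorted axis, your paths from $h^{-N}$ to $h^N$ through $g_i$ are not uniform quasi-geodesics and the construction does not get off the ground.

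Even granting the axis, the uniformity you correctly identify as the main obstacle is not resolved by your fix, and the target of the fix is too weak. Left multiplication by $h$ preserves $H$ and each coset $g_iH$ and carries your rectangle to its shift, so sliding the midpoint along the axis yields nothing beyond the bound $R(1,c\lvert g_i\rvert)$ you started with; and a constant ``intrinsic to $h$ and $H$'' would still not suffice, because in the contradiction argument $h$ is chosen after the collection of $n$ conjugates, so the final pigeonhole bound must depend only on $H\le G$ (in \cite{GMRS98} it depends only on $\delta$ and the quasiconvexity constant, via a limit-set argument with no direct analogue here). The concluding step also contains a false inference: if each coset $g_iH$ merely comes within $D$ of $H$, the cosets need not be pairwise $2D$-close, and the number of distinct cosets meeting $\mathcal{N}_D(H)$ is typically infinite (take $H=\langle a\rangle\le F_2=\langle a,b\rangle$ and the cosets $a^nbH$); what the counting needs is that all the cosets meet a single ball of radius bounded in terms of $H$ alone. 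Finally, invoking bounded packing to finish is circular in spirit, since in this circle of ideas packing statements for (strongly) quasiconvex subgroups are established together with or after finite height, not before it. Supplying the uniform localization at a common point, without an axis and with constants depending only on the strong quasiconvexity gauge, is precisely the content of the proof in \cite{Tran2017} and is the idea missing from your sketch.
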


\begin{defn}[Morse elements and purely Morse subgroups]
Let $G$ be a finitely generated group. A group element $g$ in $G$ is \emph{Morse} if $g$ is of infinite order and the cyclic subgroup generated by $g$ is strongly quasiconvex. A finitely generated subgroup $H$ of $G$ is \emph{purely Morse} if all infinite order elements of $H$ are Morse in $G$. 
\end{defn}

\begin{prop}\cite{MR3426695}
Let $G$ be a finitely generated group and let $H$ be a stable subgroup of $G$. Then $H$ is undistorted and purely Morse.
\end{prop}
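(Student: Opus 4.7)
The plan is to handle the two conclusions separately. Undistortedness of $H$ in $G$ is immediate from Theorem~\ref{tran} (together with the remark following the definition of stable subgroup), which records that every strongly quasiconvex subgroup of a finitely generated group is finitely generated and undistorted. Since a stable subgroup is by definition strongly quasiconvex, no further argument is needed for this half.

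For the purely Morse conclusion, fix an infinite order element $h \in H$; the task is to show that $\langle h \rangle$ is strongly quasiconvex in $G$. I would proceed in two stages. First, since $H$ is hyperbolic and $h \in H$ has infinite order, the standard translation length estimate in hyperbolic groups shows that the map $n \mapsto h^{n}$ is a quasi-isometric embedding of $\Z$ into $H$, so $\langle h \rangle$ is quasiconvex in $H$. Because the ambient space is hyperbolic, quasiconvexity coincides with strong quasiconvexity (this equivalence is recalled in the paragraph following Lemma~\ref{lem:quasiconvex_qi_invariant}), so $\langle h \rangle$ is strongly quasiconvex in $H$. Second, I would invoke (and prove, if needed) a transitivity statement: whenever $K \le H \le G$ with $K$ strongly quasiconvex in $H$ and $H$ strongly quasiconvex in $G$, the subgroup $K$ is strongly quasiconvex in $G$. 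Given a $(L,C)$--quasi-geodesic $\gamma$ in a Cayley graph of $G$ with endpoints in $K \subset H$, strong quasiconvexity of $H$ in $G$ forces $\gamma$ into a uniform neighborhood of $H$; undistortedness of $H$ in $G$ (which comes from Theorem~\ref{tran} again) allows one to replace $\gamma$ by a uniform quasi-geodesic $\gamma'$ in the Cayley graph of $H$ with the same endpoints, at bounded Hausdorff distance from $\gamma$; then strong quasiconvexity of $K$ in $H$ forces $\gamma'$ into a uniform neighborhood of $K$, and the bound transfers back to $G$ via the quasi-isometric embedding.

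The main obstacle is the transitivity statement, but it is genuinely routine once undistortedness is in hand: the only real content is that the endpoints of $\gamma$ lie in $H$ and $\gamma$ stays close to $H$, so one can project $\gamma$ onto $H$ while keeping uniform control of the quasi-geodesic constants. Everything else in the argument, including the quasi-convexity of infinite cyclic subgroups of hyperbolic groups and the equivalence of quasiconvexity and strong quasiconvexity in the hyperbolic setting, is already recorded earlier in the excerpt or is a standard fact about hyperbolic groups.
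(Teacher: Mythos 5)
Your argument is correct, but note that the paper does not prove this proposition at all: it is quoted from Durham--Taylor \cite{MR3426695}, so there is no internal proof to match. Within the paper's framework (where ``stable'' is \emph{defined} as strongly quasiconvex and hyperbolic), your route is the natural one, and both halves go through: undistortedness is indeed recorded in the remark following the definition (citing Theorem~1.2 of \cite{Tran2017}), and for the purely Morse half your two-stage argument works --- infinite order elements of a hyperbolic group generate quasiconvex (hence, in the hyperbolic setting, strongly quasiconvex) cyclic subgroups, and strong quasiconvexity is transitive. The one place where you do extra work is the transitivity statement: you do not need to prove it by hand, since the paper already records exactly the needed special case as Proposition~\ref{pcool} ($h\in H$ is Morse in $H$ if and only if it is Morse in $G$ when $H$ is strongly quasiconvex in $G$), and the general transitivity is Proposition~4.12 of \cite{Tran2017}; your sketch of it (project the quasi-geodesic onto $H$, use undistortedness to get a uniform quasi-geodesic in the word metric of $H$ with the same endpoints, then apply strong quasiconvexity of $K$ in $H$) is the standard proof and is sound, provided one phrases the projected path via the length-based (tame) characterization of quasi-geodesics so the constants stay uniform. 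By contrast, Durham--Taylor's original proof works with their definition of stability (an undistorted subgroup all of whose quasi-geodesics are uniformly Morse), where undistortedness is part of the definition and the Morse property of elements is extracted from the Morse quasi-geodesic condition directly; your proof instead leans on the equivalence of the two definitions, which the paper explicitly adopts, so this is a legitimate and somewhat more economical route in this context.
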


The following proposition is a direct result of Proposition 4.10 and Proposition 4.12 in \cite{Tran2017}.

\begin{prop}
\label{pcool}
Let $G$ be a finitely generated group and let $H$ be a strongly quasiconvex subgroup of $G$. A group element $h\in H$ is Morse in $H$ if and only if it is Morse in $G$. 
\end{prop}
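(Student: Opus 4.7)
The plan is to prove each implication by reducing to the fact that strongly quasiconvex subgroups are undistorted (Theorem~\ref{tran} and the ensuing remark), so that the inclusion $H \hookrightarrow G$ is a quasi-isometric embedding for any choice of finite generating sets $S$ of $G$ and $T$ of $H$, with constants $(\lambda, \mu)$ say. Both directions then reduce to standard transfers of quasi-geodesics between $\Gamma(G,S)$ and $\Gamma(H,T)$.

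For the forward implication ($h$ Morse in $G$ implies $h$ Morse in $H$), I would start with an $(L,C)$-quasi-geodesic $\gamma$ in $\Gamma(H,T)$ whose endpoints lie on $\langle h\rangle$. The quasi-isometric embedding promotes $\gamma$ to an $(L',C')$-quasi-geodesic in $\Gamma(G,S)$, so strong quasiconvexity of $\langle h\rangle$ in $G$ places $\gamma$ in some $R$-neighborhood of $\langle h\rangle$ with respect to $d_G$. Because $\gamma \subseteq H$ and $\langle h\rangle \subseteq H$, the bound $d_H \le \lambda d_G + \lambda\mu$ (a consequence of the quasi-isometric embedding, applied to pairs in $H$) converts this to a $d_H$-neighborhood bound of size $\lambda R + \lambda\mu$, exactly what is needed.

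For the converse ($h$ Morse in $H$ implies $h$ Morse in $G$), the argument is the standard transitivity of strong quasiconvexity for a tower $\langle h\rangle \le H \le G$ with $H$ strongly quasiconvex in $G$. Given an $(L,C)$-quasi-geodesic $\gamma$ in $\Gamma(G,S)$ with endpoints on $\langle h\rangle$, I would first use strong quasiconvexity of $H$ in $G$ to confine $\gamma$ to an $R_1$-neighborhood of $H$. Then I would discretize $\gamma$ at integer times as $(x_i)$, replace each $x_i$ by a nearest $y_i \in H$ (with $y_i$ equal to the appropriate endpoint when $x_i$ is an endpoint of $\gamma$), and use the triangle inequality together with the quasi-isometric embedding of $H$ into $G$ to show that $(y_i)$ is an $(L'',C'')$-quasi-geodesic in $\Gamma(H,T)$ with endpoints on $\langle h\rangle$. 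Strong quasiconvexity of $\langle h\rangle$ in $H$ then places $(y_i)$ in an $R_2$-neighborhood of $\langle h\rangle$ measured in $H$, which via inclusion is an $R_2$-neighborhood in $G$. A final triangle inequality yields that $\gamma$ lies in the $(R_1 + R_2)$-neighborhood of $\langle h\rangle$ in $G$.

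The main obstacle, modest though it is, lies in the discretization step of the converse direction: one must verify that $(y_i)$ really is a quasi-geodesic in the word metric on $H$, with constants depending only on $L, C, R_1, \lambda, \mu$ and not on $\gamma$ itself. This is a careful but routine computation using the triangle inequality and the quasi-isometric embedding bounds in both directions, and it is essentially the content of Propositions~4.10 and 4.12 of \cite{Tran2017}, which the proposition statement explicitly invokes.
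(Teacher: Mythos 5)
Your argument is correct, but it is worth noting how it relates to what the paper actually does: the paper offers no proof at all for this proposition, simply asserting it as a direct consequence of Propositions~4.10 and 4.12 in \cite{Tran2017} (transitivity of strong quasiconvexity, and the behaviour of strongly quasiconvex subsets under passing to undistorted subgroups). What you have written is a self-contained unpacking of exactly those two facts in the special case $\langle h\rangle \le H \le G$: your forward direction is the ``strongly quasiconvex intersected with an undistorted subgroup'' argument (using that $H$ is undistorted, which indeed follows from Theorem~\ref{tran} and the remark after it), and your converse is the standard proof of transitivity via confining a quasi-geodesic to a neighborhood of $H$, projecting, and checking the projected sequence is a uniform quasi-geodesic in the word metric of $H$. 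Both directions go through with the constants you indicate; the only imprecision is cosmetic: the inclusion $H\hookrightarrow G$ is not literally $1$-Lipschitz for arbitrary generating sets, so an $R_2$-neighborhood in $d_H$ only gives a $KR_2$-neighborhood in $d_G$, where $K$ bounds the $S$-length of the generators in $T$; likewise the final constant should absorb the discretization error $L+C$. These are harmless adjustments of constants, not gaps. The trade-off is the expected one: the paper's citation is shorter and defers all metric bookkeeping to \cite{Tran2017}, while your version is self-contained but essentially reproves those general propositions in the case at hand.
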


The following corollary is a direct result of Proposition~\ref{pcool}.

\begin{cor}
\label{ccool}
Let $G$ be a finitely generated group and let $H$ be a strongly quasiconvex subgroup of $G$. Then:
\begin{enumerate}
    \item If $G$ has the property that all purely Morse subgroups of $G$ are undistorted, then $H$ also has the property all purely Morse subgroups of $H$ are undistorted;
    \item If $G$ has the property that all purely Morse subgroups of $G$ are stable, then $H$ also has the property all purely Morse subgroups of $H$ are stable;
    \item If $G$ has the property that all undistorted purely Morse subgroup of $G$ are stable, then $H$ also has the property all undistorted purely Morse subgroup of $H$ are stable.
\end{enumerate}
\end{cor}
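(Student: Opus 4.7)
The plan is to use Proposition~\ref{pcool} together with standard transfer properties for nested subgroups $K \le H \le G$ to deduce each of the three statements routinely.

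First I would establish the following observation: for any subgroup $K \le H$, $K$ is purely Morse in $H$ if and only if $K$ is purely Morse in $G$. Indeed, every infinite order element $k \in K$ lies in $H$, and by Proposition~\ref{pcool} such a $k$ is Morse in $H$ precisely when it is Morse in $G$. This lets me freely pass the ``purely Morse'' hypothesis between the two ambient groups.

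Next I would record two transfer facts for a chain $K \le H \le G$. Since $H$ is strongly quasiconvex in $G$, $H$ is undistorted in $G$ (as noted in the remark after Theorem~\ref{tran}), so the inclusion $H \hookrightarrow G$ is a quasi-isometric embedding. Combining this with the factorization $K \hookrightarrow H \hookrightarrow G$ shows that $K$ is undistorted in $H$ if and only if $K$ is undistorted in $G$. For strong quasiconvexity, Proposition~4.11 of \cite{Tran2017} gives: if $H$ is strongly quasiconvex in $G$ and $K \le H$ is strongly quasiconvex in $G$, then $K$ is strongly quasiconvex in $H$. Finally, hyperbolicity of $K$ is an intrinsic property of the finitely generated group $K$ itself and so is independent of the ambient group.

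With these pieces in hand the three statements follow by direct translation: start with the relevant property of a subgroup $K \le H$, use the observation to transport the ``purely Morse'' property into $G$, apply the hypothesis on $G$, and use the transfer facts to move the conclusion back to $H$. For (1), a purely Morse $K \le H$ is purely Morse in $G$, hence undistorted in $G$ by hypothesis, hence undistorted in $H$. For (2), a purely Morse $K \le H$ is stable in $G$ by hypothesis, so $K$ is already hyperbolic and the strong quasiconvexity transfer gives strong quasiconvexity of $K$ in $H$, i.e.\ stability in $H$. For (3), an undistorted purely Morse $K \le H$ is undistorted and purely Morse in $G$, hence stable in $G$, and the transfer facts return stability in $H$. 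The only nontrivial input is the nested strong quasiconvexity statement (Proposition~4.11 of \cite{Tran2017}); beyond that the argument is essentially bookkeeping.
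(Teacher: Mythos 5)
Your proposal is correct and follows the route the paper intends: the paper simply declares the corollary ``a direct result of Proposition~\ref{pcool},'' and your argument is the natural fleshing-out of that claim, transferring the purely Morse property via Proposition~\ref{pcool} and then moving undistortedness and strong quasiconvexity between $H$ and $G$ using the undistortedness of the strongly quasiconvex subgroup $H$ and Proposition~4.11 of \cite{Tran2017}, with hyperbolicity being intrinsic. No gaps; your application of Proposition~4.11 (with $K$ as the strongly quasiconvex subgroup of $G$ and $H$ as the undistorted subgroup, so $K\cap H=K$ is strongly quasiconvex in $H$) is exactly how the paper uses that result elsewhere.
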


We now define hyperbolic elements in relatively hyperbolic groups.

\begin{defn}
Let $(G,\PP)$ be a finitely generated relatively hyperbolic group. An infinite order element $g$ in $G$ is \emph{hyperbolic} if $g$ is not conjugate to an element of a subgroup in $\PP$.
\end{defn}

The following theorem provides a characterization of a strongly quasiconvex subgroup $H$ in a relatively hyperbolic group $(G,\PP)$ in terms of its interactions with peripheral subgroups. This theorem will be used to study strongly quasiconvex subgroups in mixed manifold groups and strengthen the result to finitely generated $3$--manifold groups.

\begin{thm}[Theorem 1.9 in \cite{Tran2017}]
\label{relhyp111}
Let $(G,\PP)$ be a finitely generated relatively hyperbolic group and $H$ a finitely generated undistorted subgroup of $G$. Then the following are equivalent:
\begin{enumerate}
\item The subgroup $H$ is strongly quasiconvex in $G$.
\item The subgroup $H\cap gPg^{-1}$ is strongly quasiconvex in $gPg^{-1}$ for each conjugate $gPg^{-1}$ of peripheral subgroup in $\PP$.
\item The subgroup $H\cap gPg^{-1}$ is strongly quasiconvex in $G$ for each conjugate $gPg^{-1}$ of peripheral subgroup in $\PP$. 
\end{enumerate}
\end{thm}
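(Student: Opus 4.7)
The plan is to prove the cycle $(1)\Rightarrow(3)\Rightarrow(2)\Rightarrow(1)$, exploiting the well-known background fact that in a relatively hyperbolic group each conjugate $gPg^{-1}$ of a peripheral subgroup is itself strongly quasiconvex in $G$; peripheral cosets play the role of isolated ``flats'' in the asymptotic cone picture, so quasi-geodesics with endpoints on one cannot stray far from it. This gives us permission to treat peripheral conjugates as strongly quasiconvex subgroups in their own right throughout the argument.

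For $(1)\Rightarrow(3)$ I would first establish the general fact that the intersection of two strongly quasiconvex subgroups of any finitely generated group is again strongly quasiconvex. This can be done directly from the $(L,C)$-quasi-geodesic definition by iterating closest-point projections to each factor, or equivalently via the Morse/contracting characterization. Applying this to $H$ and $gPg^{-1}$ yields (3). For $(3)\Rightarrow(2)$, the point is that $gPg^{-1}$ being strongly quasiconvex in $G$ makes it undistorted, so the inclusion $gPg^{-1}\hookrightarrow G$ is a quasi-isometric embedding; every $(L,C)$-quasi-geodesic in $gPg^{-1}$ is then also an $(L',C')$-quasi-geodesic in $G$. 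The strong quasiconvexity of $H\cap gPg^{-1}$ in $G$ gives an $R$-neighborhood containment in $G$, which converts back to an $R'$-neighborhood containment in $gPg^{-1}$ by undistortedness.

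The substantive content is $(2)\Rightarrow(1)$, which I would do in two stages. \emph{First}, show that the undistorted subgroup $H$ is \emph{relatively quasiconvex} in $(G,\PP)$, i.e.\ quasiconvex in the cusped (coned-off) Cayley graph. This follows from the standard Hruska-type criterion: undistortedness lets one track a relative geodesic in $G$ between two points of $H$ by a path in $H$ whose peripheral excursions are uniformly short in the cusped metric. \emph{Second}, given a genuine $(L,C)$-quasi-geodesic $\gamma$ in $G$ with endpoints in $H$, invoke the structure theorem for quasi-geodesics in relatively hyperbolic groups (Drutu--Sapir, Sisto) to decompose $\gamma$ into hyperbolic portions, which are controlled by the quasiconvexity of $H$ in the cusped space, and peripheral portions that fellow-travel some coset $g_iP_i$. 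The endpoints of each peripheral portion are close to $H$ in $G$; a bounded-coset-projection / almost-malnormality argument then forces them to be close to $H\cap g_iP_ig_i^{-1}$. Hypothesis (2) upgrades this to the statement that the whole peripheral portion lies in a uniform neighborhood of $H\cap g_iP_ig_i^{-1}$ inside $g_iP_ig_i^{-1}$, and hence in $G$. Splicing the hyperbolic and peripheral bounds produces a uniform $R(L,C)$-neighborhood of $H$ containing $\gamma$.

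The main obstacle will be the coordination step inside $(2)\Rightarrow(1)$: knowing that the endpoints of a peripheral excursion of $\gamma$ are close to $H$ is \emph{not} the same as knowing they are close to $H\cap g_iP_ig_i^{-1}$, and without this refinement hypothesis (2) cannot be invoked. Bridging this gap requires the almost-malnormality of peripheral subgroups in a relatively hyperbolic group, together with uniform bounds on how $H$ projects to peripheral cosets (closely related to the finite-height input given for free by Theorem~\ref{tran}). Once that bridge is in place, the peripheral estimate from (2) plugs in cleanly, and the hyperbolic estimate comes essentially for free from the relative quasiconvexity of $H$ established in the first stage.
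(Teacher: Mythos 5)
First, a point of comparison: this theorem is not proved in the paper at all --- it is imported verbatim as Theorem~1.9 of \cite{Tran2017}, so your proposal can only be measured against that source and against internal consistency. Your cycle $(1)\Rightarrow(3)\Rightarrow(2)\Rightarrow(1)$ is reasonable in outline, and the first two legs are essentially fine: peripheral conjugates are indeed strongly quasiconvex in $G$, the intersection of strongly quasiconvex subgroups is strongly quasiconvex (these are themselves nontrivial results of \cite{Tran2017}, not things one gets cheaply by ``iterating closest-point projections''), and the conversion in $(3)\Rightarrow(2)$ via undistortedness of $gPg^{-1}$ is correct.

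The genuine gap is in your first stage of $(2)\Rightarrow(1)$: the claim that undistortedness of $H$ alone yields relative quasiconvexity of $H$ in $(G,\PP)$ is not a ``standard Hruska-type criterion''. Results of that shape are known only under strong restrictions on the peripheral subgroups (e.g.\ polycyclic peripherals), and your sketched justification does not work in general: undistortedness only produces a path through $H$ of length linearly bounded by $d_G$ between two points of $H$; such a path is not a quasi-geodesic, so no tracking or bounded-coset-penetration statement applies to it, and the assertion that its ``peripheral excursions are uniformly short in the cusped metric'' is precisely the relative quasiconvexity you are trying to establish --- the argument is circular. Note also that this stage uses hypothesis (2) nowhere, so if it were valid it would show that \emph{every} finitely generated undistorted subgroup of \emph{any} relatively hyperbolic group is relatively quasiconvex, a statement far beyond what is available; any correct proof of $(2)\Rightarrow(1)$ must feed hypothesis (2) into the control of peripheral excursions from the start. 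The second weak point compounds this: your bridge from ``endpoints of a peripheral excursion are close to $H$'' to ``close to $H\cap g_iP_ig_i^{-1}$'' leans on Hruska--Wise-type packing lemmas that are themselves proved \emph{for relatively quasiconvex subgroups}, i.e.\ on the unproven Stage 1, and the appeal to ``finite height for free from Theorem~\ref{tran}'' is circular, since Theorem~\ref{tran} applies only to subgroups already known to be strongly quasiconvex, which is the conclusion you are after.
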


The following proposition characterizes all Morse elements in a relatively hyperbolic group.

\begin{prop}
\label{Morseelementinrelhyp}
Let $(G,\PP)$ be a finitely generated relatively hyperbolic group. An infinite order element $g$ in $G$ is Morse if and only if $g$ is either a hyperbolic element or $g$ is conjugate into a Morse element in some subgroup $P$ in $\PP$.
\end{prop}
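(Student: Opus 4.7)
The plan is to prove both implications using Theorem~\ref{relhyp111} together with Proposition~\ref{pcool}. The key preliminary observation is that every peripheral subgroup $P\in\PP$ is itself strongly quasiconvex in $G$: it is undistorted by the definition of a relatively hyperbolic group, and Theorem~\ref{relhyp111} reduces the strong quasiconvexity of $P$ to checking that $P\cap gQg^{-1}$ is strongly quasiconvex in $gQg^{-1}$ for every conjugate of a peripheral subgroup $Q\in\PP$. This is immediate from the almost malnormality of the peripheral family: the intersection is either all of $P$ (when $Q=P$ and $g\in P$) or finite, and finite subgroups are trivially strongly quasiconvex.

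For the ``if'' direction, first suppose $g$ is conjugate to a Morse element in some $P\in\PP$. Since the Morse property is conjugation-invariant, we may assume $g\in P$ and $\langle g\rangle$ is strongly quasiconvex in $P$. The preliminary observation above shows $P$ is strongly quasiconvex in $G$, so Proposition~\ref{pcool} upgrades Morseness in $P$ to Morseness in $G$. Next, suppose $g$ is a hyperbolic element. Then $\langle g\rangle$ is undistorted in $G$ and has finite intersection with every conjugate of a peripheral subgroup; these are standard facts about hyperbolic elements in relatively hyperbolic groups (see, e.g., Osin's work), following from the loxodromic action of $g$ on the coned-off Cayley graph. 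Finite subgroups being trivially strongly quasiconvex, Theorem~\ref{relhyp111} immediately yields that $\langle g\rangle$ is strongly quasiconvex in $G$, i.e., $g$ is Morse.

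For the ``only if'' direction, suppose $g$ is Morse in $G$ and is not hyperbolic. Then $g$ is conjugate into some peripheral subgroup $P\in\PP$, and after replacing $g$ by its conjugate (which remains Morse), we may assume $g\in P$. Now apply Theorem~\ref{relhyp111} to the strongly quasiconvex undistorted subgroup $\langle g\rangle$: the intersection $\langle g\rangle\cap P$ is strongly quasiconvex in $P$. Since $\langle g\rangle\subseteq P$, this intersection equals $\langle g\rangle$ itself, so $g$ is Morse in $P$, completing the proof.

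The main obstacle is handling the hyperbolic element case cleanly, specifically establishing undistortedness of $\langle g\rangle$ in $G$ and the finiteness of $\langle g\rangle\cap gPg^{-1}$ for every peripheral conjugate. Both properties are classical in the theory of relatively hyperbolic groups, so once they are invoked, the remainder of the argument is a direct application of Theorem~\ref{relhyp111} and Proposition~\ref{pcool}.
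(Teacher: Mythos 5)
Your proposal is correct and follows essentially the same route as the paper: split into the hyperbolic case versus the conjugate-into-a-peripheral case, use Proposition~\ref{pcool} together with the strong quasiconvexity of peripheral subgroups for the latter, and use Theorem~\ref{relhyp111} plus undistortedness and finiteness of $\langle g\rangle\cap uPu^{-1}$ for the former. The only cosmetic differences are that you derive the strong quasiconvexity of the peripherals from Theorem~\ref{relhyp111} and almost malnormality rather than quoting it, and you cite the finiteness of $\langle g\rangle\cap uPu^{-1}$ as standard, whereas the paper gives the short almost-malnormality argument explicitly.
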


\begin{proof}
The ``only if'' direction is obtained from Proposition~\ref{pcool} and the fact that each conjugate of subgroup in $\PP$ is strongly quasiconvex in $G$. Therefore, we only need to prove the ``if'' direction. If $g=hg_0h^{-1}$ for some Morse element $g_0$ in a subgroup $P\in \PP$, then both $g$ and $g_0$ are Morse in $G$. Otherwise, $g$ is an hyperbolic element. Therefore, the cyclic subgroup $\langle g \rangle$ is undistorted (see \cite{Osin06}). We claim that $\langle g \rangle \cap uPu^{-1}$ is trivial for each subgroup $P\in \PP$ and each group element $u\in G$. In fact, if $\langle g \rangle \cap uPu^{-1}$ is not trivial for some $P\in \PP$ and some group element $u\in G$. Then, there is a positive integer $n$ such that $g^n$ in an element in $uPu^{-1}$. Let $g_1=u^{-1}g u$. Then $g_1^n$ is a group element in $P$. Since $g$ is hyperbolic, $g_1$ is not a group element in $P$. Also, $g_1Pg_1^{-1}\cap P$ is infinite. This contradicts to the fact that $P$ is almost malnormal (see \cite{Osin06}). Therefore, $\langle g \rangle \cap uPu^{-1}$ is trivial for each subgroup $P\in \PP$ and each group element $u\in G$. Therefore, the cyclic subgroup $\langle g \rangle$ is strongly quasiconvex in $G$ by Theorem~\ref{relhyp111}. Therefore, $g$ is a Morse element in $G$. 
\end{proof}

The following theorem provides a characterization of a stable subgroup $H$ in a relatively hyperbolic group $(G,\PP)$ in terms of its interactions with peripheral subgroups. 

\begin{thm}[Corollary 1.10 in \cite{Tran2017}]
\label{relhyp112}
Let $(G,\PP)$ be a finitely generated relatively hyperbolic group and $H$ a finitely generated undistorted subgroup of $G$. Then the following are equivalent:
\begin{enumerate}
\item The subgroup $H$ is stable in $G$.
\item The subgroup $H\cap gPg^{-1}$ is stable in $gPg^{-1}$ for each conjugate $gPg^{-1}$ of peripheral subgroup in $\PP$.
\item The subgroup $H\cap gPg^{-1}$ is stable in $G$ for each conjugate $gPg^{-1}$ of peripheral subgroup in $\PP$. 
\end{enumerate}
\end{thm}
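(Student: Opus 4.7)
The plan is to reduce the proof to Theorem~\ref{relhyp111}, which already settles the strong-quasiconvexity content of each condition, and to handle hyperbolicity on the side. Since hyperbolicity is an intrinsic property of the abstract group $H\cap gPg^{-1}$, independent of whether it is viewed as a subgroup of $G$ or of $gPg^{-1}$, the equivalence $(2)\Leftrightarrow(3)$ is essentially immediate: Theorem~\ref{relhyp111} shows that strong quasiconvexity of $H\cap gPg^{-1}$ in $gPg^{-1}$ and in $G$ are equivalent (both being equivalent to $H$ itself being strongly quasiconvex in $G$), and hyperbolicity transfers freely between these two points of view.

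For $(1)\Rightarrow(2)$, starting from $H$ stable, Theorem~\ref{relhyp111} already yields that every $H\cap gPg^{-1}$ is strongly quasiconvex in $gPg^{-1}$, so one only needs to show it is hyperbolic. The same theorem also gives that $H\cap gPg^{-1}$ is strongly quasiconvex, hence undistorted, in $G$. Combining the undistortedness of $H$ in $G$ (forced by stability) with the observation that the inclusion of an undistorted subgroup is a quasi-isometric embedding, one verifies that $H\cap gPg^{-1}$ is undistorted in the hyperbolic group $H$. A standard thin-triangles argument then shows that any undistorted finitely generated subgroup of a hyperbolic group is quasiconvex, and in particular hyperbolic.

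The substantive direction is $(2)\Rightarrow(1)$. Theorem~\ref{relhyp111} immediately promotes hypothesis (2) to strong quasiconvexity of $H$ in $G$, so the remaining task is to prove $H$ itself is hyperbolic. My plan is to invoke an inheritance principle from the relatively hyperbolic literature: a finitely generated, undistorted, strongly quasiconvex subgroup of a relatively hyperbolic pair $(G,\PP)$ inherits the structure of a relatively hyperbolic group whose peripheral subgroups form a set of representatives of the infinite intersections $H\cap gPg^{-1}$. Under hypothesis (2), each such peripheral is stable, in particular hyperbolic, so $H$ is a relatively hyperbolic group all of whose peripheral subgroups are hyperbolic, and hence is hyperbolic itself.

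The main obstacle is locating the correct inheritance statement in the literature: both the relatively hyperbolic structure on a strongly quasiconvex undistorted subgroup and the collapse from ``relatively hyperbolic with hyperbolic peripherals'' to absolute hyperbolicity. Both are standard in the work of Hruska, Drutu--Sapir and Osin, but some care is needed to handle possible torsion and to reconcile Hruska's relative quasiconvexity with the strong quasiconvexity used here; this reconciliation is in essence the content of Theorem~\ref{relhyp111} together with results from \cite{Tran2017}. Once these citations are in place, the argument closes without further geometric input.
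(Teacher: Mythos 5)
The paper does not actually prove Theorem~\ref{relhyp112}: it is imported verbatim as Corollary~1.10 of \cite{Tran2017}, so there is no in-paper argument to compare yours against; what you have written is a genuine reconstruction. It is, moreover, correct in outline. The equivalence $(2)\Leftrightarrow(3)$ and the direction $(1)\Rightarrow(2)$ go through as you say: Theorem~\ref{relhyp111} handles the strong-quasiconvexity content, hyperbolicity is an intrinsic property of the finitely generated group $H\cap gPg^{-1}$ (finite generation coming from strong quasiconvexity, Theorem~\ref{tran}), and in $(1)\Rightarrow(2)$ you in fact only need that $H\cap gPg^{-1}$ is undistorted in $G$ to conclude it is undistorted in $H$ — the undistortedness of $H$ itself plays no role in that step. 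The substantive direction $(2)\Rightarrow(1)$ rests, as you acknowledge, on two external inputs: that a finitely generated undistorted subgroup of a relatively hyperbolic group is relatively quasiconvex and therefore inherits a relatively hyperbolic structure whose peripherals are finitely many representatives of the infinite intersections $H\cap gPg^{-1}$ (Hruska's theorems on relative quasiconvexity for countable groups), and that a group hyperbolic relative to finitely many hyperbolic subgroups is itself hyperbolic (Osin \cite{Osin06}). Both statements are available in the literature, so the argument closes; the one point to be explicit about is the finiteness of the induced peripheral collection, which is needed to invoke the combination theorem and is supplied by Hruska's result for finitely generated relatively quasiconvex subgroups rather than by Theorem~\ref{relhyp111}. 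With those citations in place, your route is essentially the standard derivation of the cited Corollary~1.10 and buys a self-contained proof where the paper simply quotes the result.
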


\begin{cor}
\label{corcoolcool}
Let $(G,\PP)$ be a finitely generated relatively hyperbolic group. If each subgroup $P$ in $\PP$ has the property that all undistorted purely Morse subgroups in $P$ are stable, then $G$ also has the property that all undistorted purely Morse subgroups in $G$ are stable
\end{cor}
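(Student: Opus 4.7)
The plan is to invoke Theorem~\ref{relhyp112} in order to reduce the problem from $G$ to its peripheral subgroups, where the hypothesis directly applies. Let $H$ be a finitely generated, undistorted, purely Morse subgroup of $G$; I want to show $H$ is stable in $G$. Since $H$ is undistorted, Theorem~\ref{relhyp112} tells me that $H$ is stable in $G$ if and only if $K := H\cap gPg^{-1}$ is stable in $gPg^{-1}$ for every $P\in\PP$ and every $g\in G$. So fix such $g$ and $P$; it suffices to verify that $K$ is stable in $gPg^{-1}$.

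Note that $gPg^{-1}$ is isomorphic to $P$, and hence inherits the standing hypothesis: every undistorted purely Morse subgroup of $gPg^{-1}$ is stable in $gPg^{-1}$. Thus I only need to show two things, namely that $K$ is purely Morse in $gPg^{-1}$ and that $K$ is undistorted in $gPg^{-1}$. For the first, recall that in a finitely generated relatively hyperbolic group every peripheral subgroup is strongly quasiconvex, so $gPg^{-1}$ is strongly quasiconvex in $G$. Given any infinite order $k\in K$, the element $k$ lies in $H$ and therefore is Morse in $G$ by the purely Morse assumption. Applying Proposition~\ref{pcool} to the strongly quasiconvex subgroup $gPg^{-1}$ of $G$ together with $k\in gPg^{-1}$, I conclude that $k$ is Morse in $gPg^{-1}$. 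Hence $K$ is purely Morse in $gPg^{-1}$.

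The main obstacle is to verify that $K$ is undistorted in $gPg^{-1}$. My approach is to exploit that both $H$ and $gPg^{-1}$ are undistorted (strongly quasiconvex for the latter) subgroups of $G$. From this one sees that the restrictions of the word metrics of $H$, of $gPg^{-1}$, and of $G$ to $K$ are pairwise quasi-isometric, which reduces the undistortedness question for $K\hookrightarrow gPg^{-1}$ to a statement about intersections of undistorted subgroups with strongly quasiconvex peripheral subgroups. The purely Morse structure of $K$ should be the key additional ingredient that rules out distortion, by preventing the kind of "deep excursions" into $gPg^{-1}$ that distorted intersections would require; this is the step I expect to require the most care.

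Once $K$ is shown to be both purely Morse and undistorted in $gPg^{-1}$, the hypothesis on $gPg^{-1}$ yields that $K$ is stable in $gPg^{-1}$. As this holds for every conjugate $gPg^{-1}$, Theorem~\ref{relhyp112} concludes that $H$ is stable in $G$, completing the proof.
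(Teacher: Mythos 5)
Your overall strategy matches the paper's: reduce via Theorem~\ref{relhyp112} to showing $K = H\cap gPg^{-1}$ is stable in $gPg^{-1}$, and obtain the purely Morse property of $K$ from Proposition~\ref{pcool} together with the strong quasiconvexity of peripheral subgroups. That part of your argument is fine.

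The gap is exactly at the step you flag as needing "the most care": you never actually prove that $K$ is undistorted (or even finitely generated) in $gPg^{-1}$, and the sketch you give would not close it. Knowing that the metrics of $G$, $H$ and $gPg^{-1}$ restricted to $K$ are pairwise comparable says nothing about undistortedness, which is a comparison between the \emph{intrinsic} word metric of $K$ and these restricted metrics; and the purely Morse property of $K$ is not the relevant ingredient here (the paper's undistortedness argument does not use it at all). The correct tool is Proposition~4.11 of \cite{Tran2017}: since $gPg^{-1}$ is strongly quasiconvex in $G$ and $H$ is finitely generated and undistorted, the intersection $K = H\cap gPg^{-1}$ is strongly quasiconvex in $H$, hence in particular finitely generated and undistorted in $H$. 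Then $K$ is undistorted in $G$ because $H$ is undistorted in $G$, and since $gPg^{-1}$ is itself undistorted in $G$ (being strongly quasiconvex), $K$ is undistorted in $gPg^{-1}$. With finite generation and undistortedness in hand, the hypothesis on $P$ applies and the rest of your argument goes through; without this step the proof is incomplete.
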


\begin{proof}
Let $H$ be an undistorted purely Morse subgroup in $G$. We will prove that $H\cap gPg^{-1}$ is stable in $gPg^{-1}$ for each conjugate $gPg^{-1}$ of peripheral subgroup $P$ in $\PP$. By using Proposition~\ref{pcool} and the fact that $gPg^{-1}$ is strongly quasiconvex in $G$ we can conclude that $H\cap gPg^{-1}$ is a purely Morse subgroup of $gPg^{-1}$. We now claim that $H\cap gPg^{-1}$ is an undistorted subgroup in $gPg^{-1}$. By Proposition 4.11 in \cite{Tran2017} and the fact that $gPg^{-1}$ is strongly quasiconvex in $G$, the subgroup $gPg^{-1}\cap H$ is finitely generated and undistorted in $H$. Also, the subgroup $H$ is undistorted in $G$. Then $gPg^{-1}\cap H$ is also undistorted in $G$. This implies that $gPg^{-1}\cap H$ is undistorted in $gPg^{-1}$. Therefore, $H\cap gPg^{-1}$ is stable in $gPg^{-1}$. Thus, $H$ is stable in $G$ by Theorem~\ref{relhyp112}. 
\end{proof}

\section{Strongly quasiconvex subgroups in graph manifold groups}
\label{sscfgmg}
Let $M$ be a compact, connected, irreducible, orientable $3$--manifold with empty or tori boundary. $M$ is called {\it geometric} if its interior admits geometric structures in the sense of Thurston, that are $S^3$, $\mathbb{E}^3$, $\mathbb{H}^3$, $S^{2} \times \R$, $\mathbb{H}^{2} \times \R$, $\widetilde{SL}(2, \R)$, Nil and Sol. 

If $M$ is not geometric, then $M$ is called a {\it nongeometric $3$--manifold}. By Geometrization of $3$--manifolds, there is a nonempty minimal union $\mathcal{T} \subset M$ of disjoint essential tori and Klein bottles, unique up to isotopy, such that each component of $M \backslash \mathcal{T}$ is either a Seifert fibered piece or a hyperbolic piece. $M$ is called {\it graph manifold} if all the pieces of $M \backslash \mathcal{T}$ are Seifert pieces, otherwise it is a {\it mixed manifold}.

We remark here that the geometric decomposition is slightly different from the torus decomposition, but they are closely related (when $M$ has no decomposing Klein bottle, then two these decompositions agree with each other). Since we only consider virtual properties of $3$--manifolds in this paper and two these decompositions agree with each other on some finite cover of $M$, such a difference can be get rid of by passing to some finite cover of $M$.


In this section, we study strongly quasiconvex subgroups in graph manifold groups. More precisely, we prove that stable subgroups, strongly quasiconvex subgroups, and finitely generated finite height subgroups in graph manifold groups are all equivalent; and we characterize these subgroups in terms of their group elements (see Theorem~\ref{coolforgraphmanifold}). We first characterize Morse elements in graph manifold groups.

\begin{prop}[Morse elements in graph manifold groups]
\label{psis}
Let $M$ be a graph manifold group. Then a nontrivial group element $g$ in $\pi_1(M)$ is Morse if and only if $g$ is not conjugate into any Seifert subgroups. 
\end{prop}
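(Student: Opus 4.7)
The statement is a bi-implication, so I would treat the two directions separately; the forward direction falls out from finite-height obstructions already collected in Section~\ref{sec:background}, while the reverse direction is the geometric heart.

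\emph{Forward direction.} Assume $g$ is Morse. Then $\langle g\rangle$ is strongly quasiconvex in $\pi_1(M)$, hence has finite height by Theorem~\ref{tran}. Suppose for contradiction that $g$ is conjugate into some Seifert subgroup $S$; after conjugating, we may assume $g\in S$. By Proposition~\ref{p0}(1), $\langle g\rangle$ has finite height in $S$. The regular fiber of the Seifert fibration generates an infinite cyclic central subgroup of $S$, so Proposition~\ref{central} forces $\langle g\rangle$ to be either finite or of finite index in $S$. Since Morse elements have infinite order, $\langle g\rangle$ is infinite, hence of finite index in $S$; but $S$ always contains a rank-two abelian subgroup (a peripheral JSJ torus subgroup, or the fiber times the base-orbifold fundamental group when the piece is closed), contradicting virtual cyclicity of $S$.

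\emph{Reverse direction.} Suppose $g$ is not conjugate into any Seifert subgroup. Let $T$ be the Bass-Serre tree of the JSJ decomposition of $M$, whose vertex stabilizers are the Seifert subgroups and edge stabilizers are the JSJ-torus subgroups. Since $g$ lies in no vertex stabilizer, it acts hyperbolically on $T$ with translation axis $\gamma_g$. The plan is to realize $\gamma_g$ geometrically in the universal cover $\widetilde{M}$ as a bi-infinite $g$-invariant path $\tilde\alpha$, built by concatenating ``horizontal'' arcs through the successive lifts of Seifert pieces traversed by $\gamma_g$, and then show that $\tilde\alpha$ is contracting in the sense of Sisto~\cite{Sisto18}. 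Contracting subsets are strongly quasiconvex, and since $\langle g\rangle$ acts cocompactly on $\tilde\alpha$ by translations, any $\langle g\rangle$-orbit lies at bounded Hausdorff distance from $\tilde\alpha$, so $\langle g\rangle$ itself is strongly quasiconvex in $\pi_1(M)$, i.e.\ $g$ is Morse.

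The main obstacle is establishing the contracting property of $\tilde\alpha$. Each vertex space in $\widetilde{M}$ is coarsely a product $X\times\R$ with the $\R$-factor the Seifert fiber direction, so contracting can fail along fibers. The crucial point is that $g$ is not a fiber power: in every Seifert piece crossed by $\gamma_g$, the arc of $\tilde\alpha$ enters and exits through \emph{distinct} JSJ planes, forcing a definite transverse component and keeping nearest-point projections uniformly bounded on each piece. I would make this precise by combining (i) the tree structure on $\widetilde{M}$ to get no-backtracking control on any $(L,C)$-quasi-geodesic with endpoints on $\tilde\alpha$, with (ii) a per-piece projection bound inside each product $X\times\R$. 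This is the single-cyclic-subgroup incarnation of the Scott-core/contracting-subset strategy the authors outline for $(3)\Rightarrow(1)$ of Theorem~\ref{coolforgraphmanifold}: indeed, the Scott core of the cover $M_{\langle g\rangle}$ should be essentially a regular neighborhood of a horizontal loop, so the cyclic case provides the cleanest model of the general contracting argument.
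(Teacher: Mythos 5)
Your forward direction is correct and is essentially the paper's own argument: Morse implies strongly quasiconvex, hence finite height by Theorem~\ref{tran}, hence finite height in the Seifert subgroup by Proposition~\ref{p0}, and Proposition~\ref{central} plus the presence of a peripheral $\Z^2$ gives the contradiction. One small imprecision: the regular fiber need not be \emph{central} in $\pi_1$ of a Seifert piece when the base orbifold is nonorientable, so Proposition~\ref{central} should be applied to a finite-index subgroup with infinite center (the paper sidesteps this by saying Seifert subgroups are virtually a product of a free group with $\Z$); this is easily repaired via Proposition~\ref{p0} again.

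The genuine gap is in the reverse direction. The paper handles it in one line by citing Lemma~2.8 and Proposition~3.6 of \cite{Sisto18}, which say precisely that elements not conjugate into the Seifert (vertex) subgroups are contracting, hence Morse. You instead propose to build a $g$-invariant periodic path $\tilde\alpha$ over the Bass--Serre axis and show it is contracting, but the decisive step --- the uniform per-piece projection bound --- is exactly what you defer (``I would make this precise by combining (i) \dots with (ii) \dots''), and the heuristic you offer (the arc enters and leaves each piece through distinct JSJ planes, so it has a ``definite transverse component'') is not by itself a proof: hyperbolicity of the action on the Bass--Serre tree does not imply the Morse property in a general graph of groups, and the actual mechanism here is the flip geometry, in which adjacent pieces interchange fiber and base directions so that consecutive crossings force contraction of projections. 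Your outline can be completed, without circularity, by running the Section~\ref{sscfgmg} machinery on the periodic path: pass to a flip manifold via the Kapovich--Leeb bilipschitz homeomorphism \cite{KapovichLeeb98} as in Proposition~\ref{pkey}, verify that $\tilde\alpha$ satisfies hypotheses (1) and (2) of Lemma~\ref{lem:2} (its intersections with pieces are arcs of uniformly bounded diameter by $\langle g\rangle$-periodicity, its intersections with JSJ planes are points, and the dual graph is the axis, a subtree), and then apply Lemma~\ref{lem:contracting} and Lemma~\ref{lem:wellknown}. As written, though, the reverse direction is a plan rather than a proof; either quote Sisto's results as the paper does or carry out these flip-metric estimates explicitly.
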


\begin{proof}
Since each Seifert subgroup is virtually a product of a free group and $\field{Z}$, then it can not contain an infinite cyclic subgroup which is strongly convex by Proposition~\ref{central} and Theorem~\ref{tran}. Therefore, if a nontrivial group element $g$ is Morse, then $g$ is not conjugate into any Seifert subgroups. On the other hand, if $g$ is not conjugate into any Seifert subgroups, then $g$ is Morse by Lemma 2.8 and Proposition 3.6 in \cite{Sisto18}. 
\end{proof}

We now talk about the proof of Theorem~\ref{coolforgraphmanifold}. The implication $(1)\implies (2)$ is obtained from Theorem~\ref{tran}. We note that $\pi_1(M)$ is decomposed as a graph of Seifert manifold groups and this decomposition satisfies conditions (1) and (2) of Proposition~\ref{prop:tree}. Therefore, the implication ``$(2) \implies (3)$'' follows from Proposition~\ref{psis}. Moreover, the fact $H$ is free when Condition (2) holds is also obtained from Proposition~\ref{prop:tree}. Therefore, we now only need to prove the implication ``$(3) \implies (1)$'' (see Proposition~\ref{pkey}) and the equivalence ``$(3)\iff (4)$'' (see Proposition~\ref{actionontree}).

\subsection{Some preparations:}
\label{subsubsection:1}
Firstly, by passing to a finite cover $M'$ of $M$, we can assume that each Seifert piece $M_{i}$ of $M$ is a product $F_{i} \times S^1$, and $M$ does not contains a twisted $I$--bundle over the Klein bottle (see \cite{PW14}). We remark here that all nontrivial group elements in a finitely generated subgroup $H$ of $\pi_1(M)$ are Morse in $\pi_1(M)$ if and only if all nontrivial group elements in $H': = H \cap \pi_1(M')$ are Morse in $\pi_1(M')$. Moreover, $H$ is strongly quasiconvex in $\pi_1(M)$ if and only if $H'$ is strongly quasiconvex in $\pi_1(M')$. Therefore, it suffices to prove the implication ``$(3) \implies (1)$'' for $H' \le \pi_1(M')$, we still denote the subgroup of
the $3$--manifold group by $H \le \pi_1(M)$.

{\bf A Scott core of $M_H$:}
Let $p \colon M_H \to M$ be the covering space corresponding to $H$. Since $M$ has nontrivial torus decomposition, $M_H$ has an induced graph of space structure. Each elevation (i.e. a component of the preimage) of a piece of $M$ in $M_H$ is called a \emph{piece} of $M_H$, and each elevation of a decomposition torus of $M$ in $M_H$ is called an \emph{edge space} of $M_H$. Since $H$ is finitely generated, there exists a finite union of pieces $M_H^c\subset M_H$, such that the inclusion $M_H^c \to M_H$ induces an isomorphism on fundamental groups, and we take $M_H^c$ to be the minimal such manifold. In \cite{NS19}, a compact Scott core $K$ of $M_H$ (and thus $\pi_1(K) =H$) has been constructed explicitly (see Preparation Step II in \cite{NS19}) and this Scott core satisfies the following properties.
\begin{enumerate}
    \item $K \subset M^{c}_{H}$ and for each piece $M_{H,i}$ of $M^{c}_{H}$, the intersection $K \cap M_{H,i}$ is a compact Scott core of $M_{H,i}$. Note that each $M_{H,i}$ covers a Seifert piece of $M$.
    \item For each edge space $E \subset M_{H}$ the intersection $K \cap E$ is either empty or a disc of $E$ if all piece $M_{H,i}$ of $M^{c}_H$ are simply connected.
\end{enumerate}

The following lemmas capture some geometric properties of subgroups of manifold groups whose all nontrivial elements are Morse.

\begin{lem}
\label{lem:M_H,isimplyconnected}
Let $M$ be a graph manifold, and let $H$ be a finitely generated purely Morse subgroup of $\pi_1(M)$. Then each piece $M_{H,i}$ of $M^{c}_{H}$ is simply connected.
\end{lem}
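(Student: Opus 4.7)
The plan is to argue by contradiction, exploiting Proposition~\ref{psis}: in a graph manifold group, the Morse elements are precisely those not conjugate into any Seifert subgroup. After the reduction in Section~\ref{subsubsection:1}, we may assume every Seifert piece of $M$ is of the form $F_i \times S^1$, so in particular $\pi_1(M)$ and all of its subgroups are torsion-free, and ``purely Morse'' means every nontrivial element of $H$ is Morse.

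Suppose for contradiction that some piece $M_{H,i}$ of $M^c_H$ fails to be simply connected, and pick a nontrivial $\gamma \in \pi_1(M_{H,i})$. Since $M_{H,i}$ is a component of the preimage of a Seifert piece $M_j \subset M$ under the covering $p \colon M_H \to M$, the restriction $p|_{M_{H,i}} \colon M_{H,i} \to M_j$ is a covering map, hence $p_*$ embeds $\pi_1(M_{H,i})$ as a subgroup of $\pi_1(M_j)$ (up to the usual choice of basepoint, i.e.\ up to a conjugation in $\pi_1(M)$). On the other hand, the inclusions $M_{H,i} \hookrightarrow M^c_H \hookrightarrow M_H$ together with $\pi_1(M^c_H) \xrightarrow{\cong} \pi_1(M_H) = H$ realize $\pi_1(M_{H,i})$ as a subgroup of $H \le \pi_1(M)$.

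Combining the two, $\gamma$ is a nontrivial element of $H$ which is conjugate into the Seifert subgroup $\pi_1(M_j) \le \pi_1(M)$. By Proposition~\ref{psis}, $\gamma$ is therefore not Morse in $\pi_1(M)$, contradicting the hypothesis that $H$ is purely Morse. Hence each $M_{H,i}$ must be simply connected.

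The only point requiring any care is the compatibility of the two identifications of $\pi_1(M_{H,i})$ with a subgroup of $\pi_1(M)$: namely, that the subgroup obtained by going $\pi_1(M_{H,i}) \hookrightarrow H \hookrightarrow \pi_1(M)$ is (after a basepoint change) exactly the image of $\pi_1(M_{H,i}) \hookrightarrow \pi_1(M_j) \hookrightarrow \pi_1(M)$. This is immediate from functoriality of $\pi_1$ applied to the commutative diagram of inclusions and covering maps, so I do not expect a real obstacle here; the substantive content of the lemma is entirely supplied by Proposition~\ref{psis}.
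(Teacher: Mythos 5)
Your proposal is correct and is essentially the paper's own argument: both proofs come down to the covering-space identification of $\pi_1(M_{H,i})$ with (a conjugate of) $H\cap g\pi_1(M_j)g^{-1}$ and the fact, via Proposition~\ref{psis}, that a purely Morse subgroup meets every conjugate of a Seifert subgroup trivially. The paper states this directly rather than by contradiction, but the content, including the implicit use of $\pi_1$-injectivity of the Seifert pieces, is the same.
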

\begin{proof}
For each Seifert piece $M_i$ of $M$ and each group element $g\in \pi_1(M)$ the subgroup $H\cap g\pi_1(M_i)g^{-1}$ is trivial by Proposition~\ref{psis}. Since $M_{H,i}$ is a covering space of a Seifert piece $M_i$ of $M$ corresponding to subgroup $H \cap g \pi_1(M_i) g^{-1}$ for some $g \in G$, it follows that $M_{H,i}$ is simply connected.
\end{proof}

\begin{lem}
\label{lem:scottcoreuniversal}
Let $M$ be a graph manifold. Equip $M$ with a Riemannian metric and lift this metric to the universal cover $\tilde{M}$.Let $H$ be a finitely generated purely Morse subgroup of $\pi_1(M)$. Let $K \subset M^{c}_{H}$ be the Scott core of $M_H$ given by previous paragraphs.
Let $\tilde{K}$ be the preimage of $K$ in the universal cover $\tilde{M}$ of $M$. Then there exists a positive constant $\delta$ such that for any piece $\tilde{M}_{i}$ of $\tilde{M}$ with $\tilde{K} \cap \tilde{M}_{i} \neq \emptyset$, then $\tilde{K} \cap \tilde{M}_{i}$ is simply connected and $diam( \tilde{K} \cap \tilde{M}_{i}) < \delta$.
\end{lem}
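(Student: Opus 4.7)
The plan is to use the simple-connectedness of each piece $M_{H,j}$ of $M^c_H$ (Lemma~\ref{lem:M_H,isimplyconnected}) to identify each intersection $\tilde{K}\cap\tilde{M}_i$ isometrically with a Scott core of the corresponding piece $M_{H,j}$, and then to extract the uniform diameter bound from the finiteness of the number of pieces of $M^c_H$ together with the isometric action of $H$ on $\tilde{M}$.

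First I would observe that if $\tilde{K}\cap\tilde{M}_i$ is nonempty then the image of $\tilde{M}_i$ under the intermediate covering $\tilde{M}\to M_H$ must be a piece $M_{H,j}$ of $M^c_H$, since $\tilde{K}$ is the preimage of $K\subset M^c_H$ and pieces of $M_H$ are disjoint components of the complement of the lifted JSJ tori. The restricted map $\tilde{M}_i\to M_{H,j}$ is a covering of connected spaces onto a simply-connected target, hence a homeomorphism. Because $\tilde{M}\to M_H$ is a Riemannian local isometry with respect to the lifted metric, this homeomorphism is a bijective local isometry between length spaces, and hence a global isometry. Under this isometry the set $\tilde{K}\cap\tilde{M}_i$ corresponds precisely to $K\cap M_{H,j}$.

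Next I would transport the conclusions back. By property (1) of the Scott core recalled from \cite{NS19}, $K\cap M_{H,j}$ is a compact Scott core of $M_{H,j}$; since $M_{H,j}$ is simply connected, the inclusion-induced $\pi_1$--isomorphism forces $K\cap M_{H,j}$, and hence its isometric image $\tilde{K}\cap\tilde{M}_i$, to be simply connected. Compactness of $K\cap M_{H,j}$ gives a finite intrinsic diameter for $\tilde{K}\cap\tilde{M}_i$ measured inside $\tilde{M}_i$, and since the inclusion $\tilde{M}_i\hookrightarrow\tilde{M}$ is $1$--Lipschitz, the same bound holds for the diameter measured in $\tilde{M}$.

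Finally, to extract a uniform constant $\delta$ I would use that $M^c_H$ consists of finitely many pieces $M_{H,1},\dots,M_{H,N}$; the pieces of $\tilde{M}$ that meet $\tilde{K}$ then fall into $N$ orbits of the $H$--action, one for each $M_{H,j}$. Since $H$ acts on $\tilde{M}$ by isometries and preserves $\tilde{K}$, all intersections $\tilde{K}\cap\tilde{M}_i$ in a common orbit are isometric as subsets of $\tilde{M}$, so only finitely many isometry types occur, and $\delta$ can be taken to be the maximum of their diameters. The main point that will require care is the upgrade from ``bijective local isometry'' to ``global isometry'' between the pieces, which I expect to handle by the standard length-space argument (a bijective $1$--Lipschitz local isometry between length spaces preserves lengths of curves, hence preserves infima); the remaining content is covering-space bookkeeping.
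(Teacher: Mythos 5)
Your argument is correct and follows essentially the same route as the paper's proof: both use Lemma~\ref{lem:M_H,isimplyconnected} to conclude that each $K\cap M_{H,j}$ is a compact, simply connected Scott core, identify each nonempty $\tilde{K}\cap\tilde{M}_{i}$ with such a core via the covering $\tilde{M}\to M_{H}$ (a homeomorphism onto the simply connected piece, hence metrically faithful), and obtain the uniform constant $\delta$ from the finiteness of the pieces of $M^{c}_{H}$. Your added care in upgrading the local isometry to a global one and in the $H$--orbit bookkeeping only makes explicit what the paper leaves implicit.
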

\begin{proof}
For each piece $M_{H,j}$ of $M^{c}_{H}$, let $K_j = K \cap M_{H,j}$.
Since $K_j$ is a Scott core of $M_{H,j}$ and $M_{H,j}$ is simply connected (see Lemma~\ref{lem:M_H,isimplyconnected}), it follows that $K_j$ is simply connected. Since there are only finitely many pieces $M_{H,j}$ of $M^{c}_{H}$, it follows there are only finitely many $K_j$.

Let $\tilde{M}_j$ the universal cover of $M_{H,j}$, and let $\tilde{K}_{j}$ be the preimage of $K_j$ in $\tilde{M}_{j}$. Then $\tilde{K}_{j} = \tilde{K} \cap \tilde{M}_j$. Since $K_j$ is simply connected and compact, it follows that $\tilde{K}_{j}$ is compact (actually $\tilde{K}_{j}$ is homeomorphic to $K_j$ since two universal covers of a common space are homeomorphic). Since there are finitely many $K_j$ and each of them has bounded diameter. We then can find a uniform constant $\delta$ such that the statement of the lemma holds.
\end{proof}

\subsection{$\mathcal{PS}$--contracting and strong quasiconvexity in graph manifold groups}
\label{subsubsection:2}

In \cite{Sisto18}, Sisto constructed a certain collection of paths in the universal cover of graph manifolds which is called \emph{the path system $\mathcal{PS}$} and the concept of $\mathcal{PS}$--contracting to study Morse elements in graph manifold groups. We now use these concepts to study strongly quasiconvex subgroups in graph manifold groups.

\begin{defn}[path system, \cite{Sisto18}]
Let $X$ be a metric space. A \emph{path system} $\mathcal{PS}$ in $X$ is a collection of $(c,c)$--quasi-geodesic for some $c$ such that
\begin{enumerate}
    \item Any subpath of a path in $\mathcal{PS}$ is in $\mathcal{PS}$.
    \item All pairs of points in $X$ can be connected by a path in $\mathcal{PS}$.
\end{enumerate}
\end{defn}

\begin{defn}[$\mathcal{PS}$--contracting, \cite{Sisto18}]
\label{defn:contracting}
Let $X$ be a metric space and let $\mathcal{PS}(X)$ be a path system in $X$.
A subset $A$ of $X$ is called $\mathcal{PS}(X)$--contracting if there exists $C>0$ and a map $\pi \colon X \to A$ such that
\begin{enumerate}
    \item For any $x \in A$, then $d(x, \pi(x)) \le C$
    \item For any $x, y \in X$ such that $d(\pi(x), \pi(y)) \ge C$, then for any path $\gamma$ in $\mathcal{PS}(X)$ connecting $x$ to $y$ we have $d(\pi(x), \gamma) \le C$ and $d(\pi(y), \gamma) \le C$.
\end{enumerate}
The map $\pi$ will be called $\mathcal{PS}$--projection on $A$ with constant $C$.
\end{defn}

The following lemma seems well-known to experts, but we can not find them in literature. For the benefit of the reader, we provide a proof in the Appendix.
\begin{lem}
\label{lem:wellknown}
Let $A$ be a $\mathcal{PS}$--contracting subset of a metric space $X$, then $A$ is strongly quasiconvex.
\end{lem}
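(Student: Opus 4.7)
The plan is to deduce strong quasiconvexity of a $\mathcal{PS}$--contracting subset $A\subset X$ in three stages: first establish that the projection $\pi\colon X\to A$ is coarsely Lipschitz; second, show that every $\mathcal{PS}$--path with endpoints on $A$ lies in a uniform neighborhood of $A$; third, control an arbitrary $(K,C')$--quasi-geodesic with endpoints on $A$ using the previous two facts.

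For the first stage, given $u,v\in X$ I would join them by a $\mathcal{PS}$--path $\alpha$, which is a $(c,c)$--quasi-geodesic, and subdivide it into consecutive subsegments $\alpha_{1},\dots,\alpha_{n}$ of length at most $1$. Each $\alpha_{i}$ still lies in $\mathcal{PS}$ by the subpath axiom of a path system, so condition (2) of Definition~\ref{defn:contracting} applies: either the projections of its endpoints satisfy $d(\pi(p_{i}),\pi(p_{i+1}))<C$, or $\alpha_{i}$ passes within $C$ of both $\pi(p_{i})$ and $\pi(p_{i+1})$, which forces $d(\pi(p_{i}),\pi(p_{i+1}))\le L(\alpha_{i})+2C$. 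Summing over $i$ and using $L(\alpha)\le c\,d(u,v)+c$ yields a coarse Lipschitz bound $d(\pi(u),\pi(v))\le K_{0}\,d(u,v)+C_{0}$ with constants depending only on $c$ and $C$.

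For the second stage, let $\alpha\in\mathcal{PS}$ have endpoints $x,y\in A$ and take $z\in\alpha$. The subpaths $\alpha|_{[x,z]}$ and $\alpha|_{[z,y]}$ both lie in $\mathcal{PS}$. When either $d(\pi(x),\pi(z))\ge C$ or $d(\pi(z),\pi(y))\ge C$, the contracting property produces a point on one of the subpaths within $C$ of $\pi(z)\in A$; combining this with the coarse Lipschitz estimate from the first stage should bound $d(z,\pi(z))$ in terms of $c$, $C$ and $K_{0},C_{0}$ only. In the complementary case $d(\pi(x),\pi(y))<2C$, property (1) gives $d(x,y)<4C$, so $L(\alpha)$ is small and $z$ is automatically close to $A$.

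For the third stage, I would take a $\mathcal{PS}$--path $\beta$ from $x$ to $y$, which lies in a $D$--neighborhood of $A$ by the second stage. For $z$ on the quasi-geodesic $\gamma$ I would exploit the contracting property on $\mathcal{PS}$--paths joining $z$ to $x$ and to $y$; the points on these paths lying within $C$ of $\pi(z)\in A$, together with the coarse Lipschitz control of $\pi$, should bound $d(z,A)$ uniformly in $K,C',c,C$. I expect the main obstacle to lie in this stage, since the absence of global hyperbolicity prevents a direct Morse-lemma comparison between $\gamma$ and $\beta$; the contracting property must be leveraged carefully, possibly by an inductive bootstrap on the length of $\gamma$ or by covering $\gamma$ with short overlapping $\mathcal{PS}$--subpaths in order to transport the second-stage bound along $\gamma$.
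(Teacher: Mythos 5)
Your stage 1 is correct, and stage 2 can be completed, although the mechanism is not quite the one you describe: after producing points $w_1,w_2$ within $C$ of $\pi(z)$ on the subpaths on \emph{both} sides of $z$ (or letting an endpoint of $\alpha$ play that role when the relevant projections are $C$--close), it is the $(c,c)$--quasi-geodesic property of $\alpha$ applied between $w_1$ and $w_2$, not the coarse Lipschitz property of $\pi$, that forces $z$ to lie at bounded distance from $\pi(z)$. The genuine gap is stage 3, which is precisely the content of the lemma, and your sketch there does not close it. Knowing that auxiliary $\mathcal{PS}$--paths from $z$ to $x$ and to $y$ pass within $C$ of $\pi(z)$ is a statement about those auxiliary paths and is perfectly compatible with $z$ itself being arbitrarily far from $A$; and the inputs you carry over from stages 1--2 cannot suffice by themselves: for $A$ a straight line in $\RR^2$ with $\mathcal{PS}$ the family of Euclidean segments, the orthogonal projection satisfies condition (1) of Definition~\ref{defn:contracting}, is $1$--Lipschitz, and segments with endpoints on $A$ lie on $A$, yet $A$ is not strongly quasiconvex (a tent-shaped path over a long segment is a uniform quasi-geodesic leaving any fixed neighborhood). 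So any proof must exploit condition (2) at points \emph{far from} $A$ in a quantitative way. Also, ``covering $\gamma$ with short overlapping $\mathcal{PS}$--subpaths'' is not available, since subpaths of $\gamma$ need not lie in $\mathcal{PS}$, and a fixed-scale covering by auxiliary $\mathcal{PS}$--paths between points of $\gamma$ only reproduces the coarse Lipschitz bound of stage 1, which the example above shows is not enough.

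The missing ingredient is the ball-projection estimate quoted in the paper as Lemma~\ref{BallProjLem}: there is $k=k(c,C)$ with $\diam\bigl(\pi(B_r(x))\bigr)\le C$ for $r=d(x,A)/k-k$, and $d(x,\pi(x))\le k\,d(x,A)+k$. The first item follows from condition (2) applied to a $\mathcal{PS}$--path between two points of such a ball: it is a $(c,c)$--quasi-geodesic of length roughly at most $cr$, hence too short to come within $C$ of $A$ when $r$ is a small multiple of $d(x,A)$, so the two projections must be $C$--close. The crucial feature is that the scale on which $\pi$ is $C$--controlled grows \emph{linearly} with the distance to $A$. The paper's proof then considers a component $\alpha$ of $\gamma\setminus N_R(A)$, covers it by about $\bar c\,\ell(\alpha)/(R-\bar c^{2})$ balls of radius $R/\bar c-\bar c$, so $\diam(\pi(\alpha))\le \ell(\alpha)/(2\bar c)$ once $R=\bar c^{2}(1+2C)$, and feeds this into the quasi-geodesic inequality $\ell(\alpha)\le \bar c\,d(\alpha_-,\alpha_+)+\bar c$ together with $d(\alpha_\pm,\pi(\alpha_\pm))\le \bar c R+\bar c$; the term $\ell(\alpha)/(2\bar c)$ is absorbed, yielding $\ell(\alpha)\le 4\bar c^{2}(R+2)$ and hence a uniform neighborhood containing $\gamma$. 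Some such self-improving estimate along the excursions of $\gamma$ away from $A$ is what your stage 3 needs and currently lacks.
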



 \emph{Flip manifolds} are graph manifolds that are constructed as follows. Take a finite collection of products of $S^1$ with compact orientable hyperbolic surface. Glue them along boundary tori by maps which interchange the basis and fiber direction (see \cite{KapovichLeeb98}). In \cite{Sisto11}, Sisto constructs a path system for the universal cover of a \emph{flip manifold}. We first review Sisto's construction.

 Let $M$ be a flip manifold, we equip $M$ with a nice metric as described in Section~2.2 \cite{KapovichLeeb98}. This metric has the following properties. It is a locally $\CAT(0)$ metric on $M$ and the restriction of this metric on each Seifert component $M_i = F_i \times S^1$ is a hyperbolic metric (such that all boundary components are totally geodesic of unit length) on $F_i$ cross an Euclidean metric on $S^1$. This metric on $M$ induces a metric on $\tilde{M}$, which is denoted by $d$. We note that $(\tilde{M}, d)$ is a $\CAT(0)$ space by Cartan-Hadamard Theorem (see Theorem~4.1 on page~194 in \cite{MR1744486}). Lift the JSJ decomposition of the graph manifold $M$ to the universal cover $\tilde{M}$, and let $T_{\tilde{M}}$ be the tree dual to this decomposition of $\tilde{M}$. Each Seifert piece $M_i$ of $M$ is a product $F_{i} \times S^1$ where $F_i$ is a compact surface with negative Euler characteristic number. Thus each piece $\tilde{M}_i$ of $\tilde{M}$ is the product $\tilde{F}_{i} \times \R$. We will identify $\tilde{F}_{i}$ with $\tilde{F}_{i} \times \{0\}$.

\begin{defn}[Special paths for flip graph manifolds \cite{Sisto18}]
\label{defn:specialpath}
Let $M$ be a flip manifold. A path in $\tilde{M}$ is called a \emph{special path} if it is constructed as follows. For any $x$ and $y$ in $\tilde{M}$. If $x$ and $y$ belong to the same piece $\tilde{M}_i$ of $\tilde{M}$ for some $i$, the special path connecting $x$ to $y$ is defined to be the geodesic from $x$ to $y$. 

We now assume that $x$ and $y$ belong to different pieces of $\tilde{M}$. Let $[x,y]$ be the geodesic in $(\tilde{M},d)$ connecting $x$ to $y$. The path $[x,y]$ passes through a sequence of pieces $\tilde{M}_{0}, \dots, \tilde{M}_{n}$ of $\tilde{M}$ where $x \in \tilde{M}_{0}$, $y \in \tilde{M}_{n}$, $n \ge 1$. 

For convenience, relabel $x$ by $x_0$ and $y$ by $x_{n+1}$. For each $i \in \{0, \dots, n-1\}$. Let $\tilde{T}_{i} = \tilde{M}_{i} \cap \tilde{M}_{i+1}$. The plane $\tilde{T}_{i}$ covers a JSJ torus $T_i$ obtained by identifying boundary tori $\overleftarrow{T_i}$ and $\overrightarrow{T_i}$ of Seifert pieces $M_{i}$ and $M_{i+1}$ of $M$.

For each $i \in \{1, \dots, n-1\}$, let $p_{i}$ and $q_{i}$ be the points in the lines $\tilde{T}_{i-1} \cap \tilde{F}_{i}$ and $\tilde{T}_{i} \cap \tilde{F}_{i}$ respectively such that the geodesic $[p_i, q_i]$ is the shortest path joining two lines $\tilde{T}_{i-1} \cap \tilde{F}_{i}$ and $\tilde{T}_{i} \cap \tilde{F}_{i}$. 

Let $p_{0}$ be the projection of $x_0 \in \tilde{M}_{0} = \tilde{F}_{0} \times \R$ into the base surface $\tilde{F}_{0}$. Let $q_{0}$ be the point in $\tilde{T}_{0} \cap \tilde{F}_{0}$ minimizing the distance from $p_0$. 

Let $q_{n}$ be the projection of $x_{n+1} = y \in \tilde{M}_{n} = \tilde{F}_{n} \times \R$ into the base surface $\tilde{F}_{n}$. Let $p_{n}$ be the point in $\tilde{T}_{n-1} \cap \tilde{F}_{n}$ minimizing the distance from $q_n$.


So far, we have a sequence of points $p_0, q_0, p_1, q_1, \dots, p_n, q_n$. For each $i \in \{0, \dots, n-1\}$, let $\overleftarrow{\ell_i}$ and $\overrightarrow{\ell_i}$ be the Euclidean geodesics in $\tilde{T}_i$ passing through $q_i$ and $p_{i+1}$ such that they project to fibers $\overleftarrow{f_i} \subset \overleftarrow{T_i}$ and $\overrightarrow{f_i} \subset \overrightarrow{T_i}$ respectively. Two lines $\overleftarrow{\ell_i}$ and $\overrightarrow{\ell_i}$ intersects at a point in $\tilde{T}_{i}$, which is denoted by $x_{i+1}$. Hence, we have a sequence of point $x = x_0, x_1, \dots, x_{n+1} =y$. Let $\gamma_i$ be the geodesic connecting $x_i$ to $x_{i+1}$. Let $\gamma$ be the concatenation $\gamma_{0} \cdot \gamma_{1} \cdots \gamma_{n}$. Then $\gamma$ is the \emph{special path} from $x$ to $y$.
\end{defn}


\begin{lem}[Proposition~3.6 \cite{Sisto18}]\label{SistoSPathLem}
Let $M$ be a flip manifold.
Let $\mathcal{PS}(\tilde{M})$ be the collection of the special paths in $\tilde{M}$. Then $\mathcal{PS}(\tilde{M})$ is a path system of $\tilde{M}$.
\end{lem}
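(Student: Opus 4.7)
The plan is to verify the three requirements in the definition of a path system: (i) every pair of points of $\tilde{M}$ is connected by some element of $\mathcal{PS}(\tilde{M})$; (ii) the collection is closed under taking subpaths; and (iii) there is a uniform constant $c$ such that every special path is a $(c,c)$-quasi-geodesic. Condition (i) is immediate from Definition~\ref{defn:specialpath}, which constructs a special path for every ordered pair $(x,y)$.

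For (ii), the key observation is that in Definition~\ref{defn:specialpath} the auxiliary points $p_i,q_i$ for $1\le i\le n-1$ depend only on the pair of adjacent JSJ planes bounding the interior piece $\tilde{M}_i$, and not on the original endpoints $x$ or $y$; only $p_0,q_0$ and $p_n,q_n$ depend on the endpoints, and they do so only through the base-surface projections of $x$ and $y$. Given points $x',y'$ lying on the special path from $x$ to $y$ and sitting in pieces $\tilde{M}_i$ and $\tilde{M}_j$ respectively, one reruns the construction for $(x',y')$: the interior data over $i<k<j$ are reproduced unchanged, and matching the new entry and exit data reduces to a direct computation in the product $\CAT(0)$ metric $\tilde{F}_i\times\R$ showing that the base-surface projection of a point that already lies on the $\CAT(0)$ geodesic $\gamma_i$ produces the correct perpendicular foot.

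The main content is (iii). Parameterizing $\gamma=\gamma_0\cdots\gamma_n$ by arclength gives $d(\gamma(s),\gamma(t))\le|s-t|$ for free, so the remaining task is to show $\mathrm{length}(\gamma)\le c\,d(x,y)+c$. Each segment $\gamma_i$ is a $\CAT(0)$ geodesic in the product piece $\tilde{M}_i=\tilde{F}_i\times\R$ whose displacement splits as $(a_i,b_i)$, with $a_i$ coming from the common perpendicular between two fiber lines in $\tilde{F}_i$ and $b_i$ a fiber translation along $\tilde{T}_i$. The plan is to project the $\CAT(0)$ geodesic $[x,y]\subset\tilde{M}$ piecewise onto each base surface $\tilde{F}_i$ and onto the JSJ plane $\tilde{T}_i$ using the product projection, which is $1$-Lipschitz, and to bound $a_i$ and $b_i$ by the corresponding displacements of $[x,y]$. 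The flip gluing is crucial here: it identifies the fiber direction of $\tilde{M}_i$ along $\tilde{T}_i$ with a base-surface geodesic direction of $\tilde{M}_{i+1}$ along $\tilde{T}_i$, so the coordinates rotate across every JSJ plane in a controlled way.

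The main obstacle is making (iii) uniform in the number $n$ of pieces traversed by $[x,y]$. A naive per-piece estimate accumulates additive error proportional to $n$, which need not be controlled by $d(x,y)$. The way around is to exploit the flip-induced telescoping: the successive displacements $(a_i,b_i)$ organize into a single path on the Bass-Serre tree $T_{\tilde{M}}$ whose total length is bounded by the tree projection of $[x,y]$, which is itself bounded by $d(x,y)$. Establishing this telescoping — essentially a tree-projection lemma for flip manifolds — and then summing via a Cauchy-Schwarz-style comparison is the technical crux; conditions (i) and (ii) are by comparison routine.
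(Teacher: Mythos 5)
Your items (i) and (ii) are fine: connectivity is immediate, and your observation that the interior data $p_i,q_i$ are endpoint-independent, while the closest-point projection of a point already lying on $\gamma_i$ reproduces the original foot $q_i$, does give subpath-closure (the paper itself simply attributes these to Sisto). The genuine gap is in (iii), which is exactly the part the paper cannot cite and has to prove itself (Proposition~\ref{uniformquasi-geodesic} in the appendix, via the Horizontal Slide Lemma~\ref{HSildeLem}). First, you misdiagnose the obstacle: an additive error of size $Cn$ is \emph{not} a problem, because consecutive JSJ planes are $\rho$-separated, so $(n-2)\rho\le d(x,y)$ and $Cn\le (C/\rho)d(x,y)+O(C)$ -- this trivial count is precisely how the paper absorbs the per-piece errors at the end of its proof. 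The real difficulty, which your plan asserts rather than proves, is obtaining \emph{any} per-piece comparison with bounded additive error between the special path and the $\CAT(0)$ geodesic. Your two proposed mechanisms do not deliver it: the Bass--Serre tree path of the special path and of $[x,y]$ is the same combinatorial path, and its length only counts pieces, so it cannot bound the metric displacements $(a_i,b_i)$; and the $1$-Lipschitz product projection bounds the horizontal displacements $a_i$ (distance between the feet $p_i,q_i$ of the common perpendicular) but not the vertical ones $b_i$, because the transition point $x_{i+1}\in\tilde T_i$ of the special path can be far from the point where $[x,y]$ crosses $\tilde T_i$: its fiber coordinate in $\tilde M_i$ is dictated by data from the adjacent piece ($p_{i+1}$, resp.\ $q_i$), so the comparison is inherently a cross-piece bookkeeping problem, not a piecewise projection estimate.

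What closes this gap in the paper is the horizontal slide argument: working with the $L^1$-metric, one slides the geodesic's crossing points $y_i$ to the special path's transition points $x_i$ one piece at a time, and each slide costs only a uniform additive constant. That constant comes from the \emph{hyperbolicity of the base surfaces} $\tilde F_i$ (efficiency of closest-point projection to the totally geodesic boundary line, inequality~(\ref{HypDistEQ})), combined with the flip identification of the $d_h$-metric on the boundary line of $\tilde F_i$ with the $d_v$-metric on the fiber of the neighboring piece, which converts the slide distance into a vertical coordinate of the next piece at no metric cost. Your sketch invokes the flip identification but never uses hyperbolicity of the base, and no ``Cauchy--Schwarz-style'' summation can substitute for it; without an estimate of this type the bound $\ell(\gamma)\le c\,d(x,y)+c$ (and hence the uniform quasi-geodesic constant $c$ required by the definition of a path system) is not established.
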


Before we get into the proofs of Proposition~\ref{pkey} and Proposition~\ref{actionontree}, we need several lemmas.

\begin{lem}
\label{lem:1}
Let $F$ be a connected compact surface with nonempty boundary and $\chi(F) <0$. Let $M = F \times S^1$. Equip $F$ with a hyperbolic metric and equip $M$ with the product metric. Let $A$ be a subset of $\tilde{M}$ such that $diam(A) \le \delta$ for some $\delta >0$. Then there exists a constant $r>0$ that depends only on $\delta$ and the metric on $F$ such that the following holds.
Let $E$ and $E'$ be two planes boundaries of $\tilde{M}$ such that $A \cap E \neq \emptyset$ and $A \cap E' \neq \emptyset$. Let $\ell$ (resp. $\ell'$) be the boundary line of $\tilde{F}$ such that $\ell \subset E $ (resp. $\ell' \subset E'$). Let $p \in \ell$ and $q \in \ell'$ such that the geodesic $[p, q]$ is the shortest path joining $\ell$ to $\ell'$. For any $x \in E \cap A$ and $y \in E' \cap A$, let $u$ and $v$ be the projection of $x$ and $y$ to the lines $\ell$ and $\ell'$ respectively. Then $d(u, p) \le r$ and $d(v, q) \le r$.
\end{lem}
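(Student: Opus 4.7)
The plan is to reduce the statement to a purely two-dimensional hyperbolic geometry problem inside $\tilde F$. Since the metric on $\tilde M = \tilde F \times \R$ is a product, each boundary plane has the form $E = \ell \times \R$ for some boundary geodesic $\ell$ of $\tilde F$; identifying $\ell$ with $\ell \times \{0\}$, the nearest-point projection of $x = (x_F, x_\R) \in E$ onto $\ell$ is simply $u = x_F$, and likewise $v = y_F$. Because the coordinate projection $\tilde M \to \tilde F$ is $1$--Lipschitz for the product metric, one has
\[
d_{\tilde F}(u,v) \;=\; d_{\tilde F}(x_F, y_F) \;\le\; d(x,y) \;\le\; \diam(A) \;\le\; \delta.
\]
It therefore suffices to produce a constant $r = r(\delta, F)$ with the following property: whenever $\ell \ne \ell'$ are distinct boundary lines of $\tilde F$ with common perpendicular $[p,q]$, if $u \in \ell$ and $v \in \ell'$ satisfy $d_{\tilde F}(u,v) \le \delta$, then $d_{\tilde F}(u,p) \le r$ and $d_{\tilde F}(v,q) \le r$.

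The second step is a uniform separation estimate for the boundary lines of $\tilde F$. Since $F$ is compact with totally geodesic boundary consisting of finitely many pairwise disjoint simple closed geodesics, distinct components of $\partial F$ lie at some positive minimum distance in $F$, and the Collar Lemma produces an embedded collar of positive width around each boundary geodesic. The first fact separates boundary lines of $\tilde F$ covering distinct boundary components of $F$, the second separates distinct lifts of the same component. Taking the minimum over these finitely many constants gives $\eta > 0$, depending only on the metric on $F$, such that $d_{\tilde F}(\ell, \ell') \ge \eta$ for all pairs of distinct boundary lines. In particular $\ell$ and $\ell'$ are ultraparallel geodesics in $\Hyp^2$ (viewing $\tilde F$ as a convex subset of $\Hyp^2$), and their common perpendicular $[p,q]$ is unique with length $\eta_0 := d_{\tilde F}(p,q) \ge \eta$.

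The core step is hyperbolic trigonometry. I work in Fermi coordinates $(\sigma, \tau)$ with respect to $\ell$, so that the metric on $\Hyp^2$ reads $\cosh^2(\tau)\,d\sigma^2 + d\tau^2$, with $p = (0,0)$ and $q = (0, \eta_0)$; a point with Fermi coordinates $(\sigma_0, \tau_0)$ lies at distance $|\tau_0|$ from $\ell$, with nearest-point projection $(\sigma_0, 0)$. Integrating the geodesic equation for $\ell'$ starting at $q$ tangent to the $\sigma$--direction (equivalently, applying the right-angled Lambert quadrilateral formula) shows that as $v = (\sigma_v, \tau_v) \in \ell'$ moves away from $q$, both $\tau_v$ and $|\sigma_v|$ strictly increase, and the growth rate of $\tau_v$ is bounded below by a positive function of $\eta_0$. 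Combined with $\tau_v = d_{\tilde F}(v,\ell) \le d_{\tilde F}(u,v) \le \delta$, this bounds both $|\sigma_v|$ and the arc length along $\ell'$ from $q$ to $v$ by an explicit constant depending only on $\delta$ and $\eta$. Since the nearest-point projection $\Hyp^2 \to \ell$ is $1$--Lipschitz ($\ell$ is convex in the $\CAT(0)$ space $\Hyp^2$), the $\sigma$--coordinate $\sigma_u$ of $u$ satisfies $|\sigma_u - \sigma_v| \le d_{\tilde F}(u,v) \le \delta$, whence $d_{\tilde F}(u,p) = |\sigma_u| \le r$ for some $r = r(\delta, \eta)$; the bound on $d_{\tilde F}(v,q)$ follows by the symmetric argument in Fermi coordinates with respect to $\ell'$. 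The main obstacle is purely bookkeeping in the hyperbolic identity, since the qualitative assertion that $\ell'$ exits any $\delta$--tube of $\ell$ in bounded arc length is immediate from the ultraparallel position.
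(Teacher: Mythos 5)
Your proposal is correct, but it reaches the key estimate by a different route than the paper. The paper disposes of the whole lemma in a few lines by citing the Behrstock--Neumann fact that $\tilde F$ is bilipschitz to a fattened tree, which yields a constant $\epsilon$ (depending only on the metric on $F$) with $d_{\tilde F}(s,p)+d_{\tilde F}(p,q)+d_{\tilde F}(q,t)\le d_{\tilde F}(s,t)+\epsilon$ for all $s\in\ell$, $t\in\ell'$; applying this to $s=u$, $t=v$ and using $d(u,v)\le\delta$ (your same $1$--Lipschitz coordinate-projection reduction, which the paper states without comment) gives $r=\delta+\epsilon$ at once. You instead prove the underlying coarse fact from scratch: uniform separation $\eta>0$ of distinct boundary lines of $\tilde F$ (collar lemma plus compactness), hence ultraparallel geodesics in $\Hyp^2$ with a genuine common perpendicular, and then the Lambert-quadrilateral relation $\sinh d(v,\ell)=\sinh(\eta_0)\cosh d(v,q)$ forces $v$ (and, via $1$--Lipschitz projection to $\ell$, also $u$) to lie a bounded distance from $q$ (resp. $p$). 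This buys a self-contained argument with explicit constants and no appeal to the fattened-tree model, at the cost of length; the paper's version is shorter and reuses coarse geometry it already cites. Two small points to tidy: the phrase ``the growth rate of $\tau_v$ is bounded below'' is not literally true near $s=0$ (the derivative vanishes at $q$), but it is also not needed, since the explicit Lambert formula already bounds the arc length of $\ell'$ inside the $\delta$--tube of $\ell$; and both your argument and the paper's implicitly assume $E\neq E'$ (so $\ell\neq\ell'$) and that the hyperbolic metric has geodesic boundary, so that $\tilde F$ is convex in $\Hyp^2$ -- both consistent with how the lemma is used for flip manifolds.
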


\begin{proof}
 The chosen hyperbolic metric on $F$ induces a metric on $\tilde{F}$ which is denoted by $d_{\tilde{F}}$. Note that $(\tilde{F}, d_{\tilde{F}})$ is Bilipschitz homeomorphic to a fattened tree (see the paragraph after Lemma~1.1 in \cite{BN08}). Thus, there exists a constant $\epsilon >0$ that depends only on the metric $d_{\tilde{F}})$ such that for any $s \in \ell$ and $t \in \ell'$ we have $$d_{\tilde{F}}(s,p) + d_{\tilde{F}}(p, q) + d_{\tilde{F}}(q, t) \le \epsilon + d_{\tilde{F}}(s,t)$$ 
Since $u \in \ell$, $v \in \ell'$ and $d$ is the product metric of $d_{\tilde{F}}$ with the metric on $\R$, we have
\[
d(u, p) + d(p, q) + d(q, v) \le \epsilon +d(u,v)
\]
Since $diam(A) \le \delta$, it follows that $d(x, y) \le \delta$. Hence $d(u, v) \le \delta$. Let $r = \delta + \epsilon$, it is easy to see that $d(u, p) \le r$ and $d(v, q) \le r$.
\end{proof}

\begin{lem}
\label{lem:2}
Let $M$ be a flip manifold equipped with a metric as described in previous paragraphs. Let $\tilde{K}$ be a subset of $\tilde{M}$ such that the following holds.
\begin{enumerate}
    \item There is a positive constant $\delta$ such that the following holds. If $\tilde{K}$ has nonempty intersection with a piece $\tilde{M}_i$ of $\tilde{M}$, then $\tilde{K} \cap \tilde{M}_i$ is simply connected and $diam(\tilde{K} \cap \tilde{M}_{i}) \le \delta$.
    \item For each plane $E$, the intersection $\tilde{K} \cap E$ is either empty or a disc. The graph $T_{\tilde{K}}$ duals to the decomposition of $\tilde{K}$ along those discs is a subtree of $T_{\tilde{M}}$.
\end{enumerate}
There exists $R >0$ such that the following holds. For any $x, y \in \tilde{K}$, let $\gamma$ be the special path in $\tilde{M}$ connecting $x$ to $y$. Let $\tilde{M}_1, \cdots, \tilde{M}_{s}$ be the sequence of Seifert pieces where $\gamma$ passing through. Then 
\[
\gamma \cap \tilde{M_i} \subset \mathcal{N}_{R}(\tilde{K} \cap \tilde{M}_i).
\]
\end{lem}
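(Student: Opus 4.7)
The plan is to combine hypothesis~(2) with two applications of Lemma~\ref{lem:1}, one in $\tilde M_i$ itself and one in an adjacent piece, to bound each special-path point $x_i$ near $\tilde K\cap \tilde M_i$, then conclude by convexity of distance.

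First I would argue that every piece $\tilde M_i$ and every separating plane $\tilde T_{i-1}$ traversed by $\gamma$ meets $\tilde K$: since $x,y\in\tilde K$ and $T_{\tilde K}\subseteq T_{\tilde M}$ is a subtree by~(2), the tree-geodesic in $T_{\tilde M}$ from the vertex of the piece containing $x$ to that of the piece containing $y$ already lies in $T_{\tilde K}$. Thus each $\tilde K\cap\tilde M_i$ is simply connected of diameter $\le\delta$, and for any fixed $z\in \tilde K\cap \tilde M_i$ and any $x'\in\tilde K\cap\tilde T_{i-1}$ we have $d(x',z)\le\delta$.

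For a middle piece $\tilde M_i=\tilde F_i\times\R$, I would read off $x_i$ in product coordinates using Definition~\ref{defn:specialpath}, namely $x_i=(p_i,v_{q_{i-1}})\in \tilde F_i\times\R$, where $v_{q_{i-1}}$ is the position of $q_{i-1}$ in the fiber direction of $\tilde M_i$ obtained via the flip identification on $\tilde T_{i-1}$. Fix any $x'\in\tilde K\cap\tilde T_{i-1}$. Lemma~\ref{lem:1} applied in $\tilde M_i$ (with $A=\tilde K\cap\tilde M_i$ meeting both boundary planes) puts the base coordinate of $x'$ in $\tilde M_i$'s splitting within $r$ of $p_i$; Lemma~\ref{lem:1} applied in the adjacent piece $\tilde M_{i-1}$ puts the base coordinate of $x'$ in $\tilde M_{i-1}$'s splitting within $r$ of $q_{i-1}$, which via the flip is exactly the assertion that the fiber coordinate of $x'$ in $\tilde M_i$'s splitting is within $r$ of $v_{q_{i-1}}$. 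Combining via the product metric gives $d(x_i,x')\le 2r$ and hence $d(x_i,z)\le 2r+\delta$; the symmetric argument, using $\tilde M_{i+1}$ and a point of $\tilde K\cap\tilde T_i$, gives the same bound for $x_{i+1}$.

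For the two extreme pieces $\tilde M_0$ and $\tilde M_n$, I would replace the missing application of Lemma~\ref{lem:1} on the interior-point side by the direct estimate $d(x,x')\le\delta$ for $x'\in\tilde K\cap\tilde T_0$ (and symmetrically for $y\in\tilde M_n$), which already forces $d_{\tilde F_0}(p_0,q_0)\le\delta$, while the fiber coordinate of $x_1$ in $\tilde M_0$'s splitting is still controlled by one application of Lemma~\ref{lem:1} in $\tilde M_1$. Finally, each $\gamma_i\subset\tilde M_i$ is a geodesic in the CAT$(0)$ product $\tilde F_i\times\R$, so $d(\cdot,z)$ is convex along $\gamma_i$, giving $\sup_{p\in\gamma_i}d(p,z)\le\max\{d(x_i,z),d(x_{i+1},z)\}\le 2r+\delta$ and therefore $\gamma_i\subset\mathcal{N}_{R}(\tilde K\cap\tilde M_i)$ for the uniform constant $R:=2r+2\delta$. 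The step I expect to be the main obstacle is the careful bookkeeping of the flip identification across each $\tilde T_{i-1}$, which swaps base and fiber directions and thereby determines which base-projection estimate from Lemma~\ref{lem:1} controls which coordinate of $x_i$ when read in $\tilde M_i$'s splitting.
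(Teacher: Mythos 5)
Your proposal is correct and follows essentially the same route as the paper's proof: you bound each division point $x_i$ by comparing its base and fiber coordinates with those of a point of $\tilde{K}$ in the plane $\tilde{T}_{i-1}$, using Lemma~\ref{lem:1} once in each of the two adjacent pieces (with the flip exchanging the roles of base and fiber) and a direct $\delta$--estimate at the two extreme pieces, exactly as the paper does with its points $O_i$, $U_i$, $V_i$. The only cosmetic differences are your explicit subtree justification that every traversed piece and plane meets $\tilde{K}$, your use of $\CAT(0)$ convexity to promote the endpoint bounds along each $\gamma_i$ (where the paper says this is easy to see), and the precise value of the uniform constant $R$, which may need to be enlarged slightly (e.g.\ to $2r+3\delta$) but this is immaterial.
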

\begin{proof}
Given $\delta$, for each Seifert piece $M_i$ of $M$, let $r_i$ be the constant given by Lemma~\ref{lem:1}. Since there are finitely many Seifert pieces of $M$, we can choose a constant $R_1 >0$ so that the conclusion of Lemma~\ref{lem:1} applies to all Seifert pieces of $M$.

Let $R =5R_1 + 5\delta$. We consider the following cases.

Case~1: $x$ and $y$ belong to the same piece $\tilde{M}_i$ of $\tilde{M}$. Since $diam(\tilde{K} \cap \tilde{M}_i) \le \delta$ and $x, y \in \tilde{K} \cap \tilde{M}_i$, we have $d(x, y) \le \delta$. By definition, the special path $\gamma$ connecting $x$ to $y$ is the geodesic connecting $x$ to $y$. Thus $\gamma \subset \mathcal{N}_{\delta}(\tilde{K}) \subset \mathcal{N}_{R}(\tilde{K})$

Case~2: $x$ and $y$ belong to two distinct pieces of $\tilde{M}$. Let $\gamma$ be the special path connecting $x$ to $y$. The path $\gamma$ is constructed explicitly in Definition~\ref{defn:specialpath}. Let $x_0, x_1, \dots, x_n$ be the sequence of points given by Definition~\ref{defn:specialpath}. Let $\gamma_i$ be the geodesic connecting $x_i$ to $x_{i+1}$. We recall that $\gamma$ is the concatenation $\gamma_{0} \cdot \gamma_{1} \cdots \gamma_{n}$.

{\bf Claim:} $\gamma_{i} \subset \mathcal{N}_{R}(\tilde{K})$ for each $i \in \{0, \dots, n\}$.

The proofs of the cases $i =0$ and $i =n$ are similar, so we only need the proof for case $i =0$. The proofs of the cases $i =1, \dots, n-1$ are similar, so we only give the proof for the case $i =1$.

\underline{Proof of the case $i =0$:}

Since $\tilde{K} \cap \tilde{T}_{0} \neq \emptyset$, we choose a point $O_1 \in \tilde{K} \cap \tilde{T}_{0}$. Let $U_1 \in \tilde{T}_{0} \cap \tilde{F}_{0}$ be the projection of $O_1$ into the base surface $\tilde{F}_{0}$ of $\tilde{M}_{0} = \tilde{F}_{0} \times \R$. Let $V_1 \in \tilde{T}_{0} \cap \tilde{F}_{1}$ be the projection of $O_1$ into the base surface $\tilde{F}_{1}$ of $\tilde{M}_{1} = \tilde{F}_{1} \times \R$.

By Lemma~\ref{lem:1}, we have $d(V_1, p_1) \le R_1$. Since $O_1, x_0 \in \tilde{K} \cap \tilde{M}_{0}$ and $diam(\tilde{K} \cap \tilde{M}_{0}) \le \delta$, it follows that $d(O_1, x_0) \le \delta$. Since $p_0$ is the projection of $x_0$ to the base surface $\tilde{F}_{0}$ of $\tilde{M}_0$, and the metric on each piece $\tilde{M}_{i} = \tilde{F}_{i} \times \R$ is the product metric, we have $d(p_0, U_1) \le \delta$. By the construction, $q_0$ is the point in $\tilde{T}_{0} \cap \tilde{F}_{0}$ such that $d(p_0, q_0) \le d(p_0, y)$ for all $y \in \tilde{T}_{0} \cap \tilde{F}_{0}$. Since $U_1 \in \tilde{T}_{0} \cap \tilde{F}_{0}$, it follows that $d(p_0, q_0) \le d(p_0, U_1) \le \delta$. By the triangle inequality, we have $d(U_1, q_0) \le d(U_1, p_0) + d(p_0, q_0) \le \delta + \delta = 2\delta$.
Using Euclidean geometry in the plane $\tilde{T}_0$ we have
\[
d(O_1, x_1) = \sqrt{d(U_1, q_0)^2 + d(V_1, p_1)^2 } \le d(U_1, q_0) + d(V_1, p_1) \le 2\delta + R_1
\]
Since $x_0, O_1$ belong to $\tilde{K} \cap \tilde{M}_0$ and $diam(\tilde{K} \cap \tilde{M}_0) \le \delta$, it implies that $d(O_1, x_0) \le \delta$. Thus 
\[
d(x_0, x_1) \le d(x_0, O_1) + d(O_1, x_1) \le \delta + 2\delta + R_1 = 3\delta + R_1
\]
Since $\gamma_0$ is the geodesic connecting $x_0 \in \tilde{K}$ to $x_1 \in \mathcal{N}_{2\delta + R_1}(O_1)$, it is easy to see that $\gamma_{0} \subset \mathcal{N}_{R}(\tilde{K})$. 

\underline{Proof of the case $i=1$:}
Since $\tilde{K} \cap \tilde{T}_1 \neq \emptyset$, we choose a point $O_2$ in $\tilde{K} \cap \tilde{T}_1 \neq \emptyset$. Let $U_2 \in \tilde{T}_{1} \cap \tilde{F}_{1}$ be the projection of $O_2$ to the base surface $\tilde{F}_{1}$ of $\tilde{M}_{1}$. Let $V_2 \in \tilde{T}_{1} \cap \tilde{F}_{2}$ be the projection of $O_2$ to the base surface $\tilde{F}_{2}$ of $\tilde{M}_{2}$.

 By Lemma~\ref{lem:1}, we have $d(q_1, U_2) \le R_1$ and $d(p_2, V_2) \le R_1$. Using Euclidean geometry in the plane $\tilde{T}_1$ we have 
\[
d(x_2, O_2) = \sqrt{d(q_1, U_2)^2 + d(p_2, V_2)^2} \le d(q_1, U_2) +d(p_2, V_2) \le 2R_1
\]
Since $O_1, O_2 \in \tilde{K} \cap \tilde{M}_{1}$, it follows that $d(O_1, O_2) \le \delta$. Thus $d(x_1, x_2) \le d(x_1, O_1) + d(O_1, O_2) + d(O_2, x_2) \le 2\delta + R_1 + \delta + 2R_1 = 3\delta + 3R_1$.

Since $\gamma_1$ is a geodesic connecting $x_1$ to $x_2$, it is easy to see that $\gamma_1 \subset \mathcal{N}_{R}(\tilde{K})$.

Since $\gamma \cap \tilde{M}_{i} = \gamma_{i}$, the lemma is proved.
\end{proof}

\begin{lem}
\label{lem:contracting}
Let $M$ be a flip manifold and let $\mathcal{PS}(\tilde{M})$ be the collection of special paths in $\tilde{M}$.
Let $\tilde{K}$ be the set given by Lemma~\ref{lem:2}. Then
$\tilde{K}$ is $\mathcal{PS}$--contracting subset of $\tilde{M}$.
\end{lem}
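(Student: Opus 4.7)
I would construct a tree-based projection $\pi \colon \tilde{M} \to \tilde{K}$. For $x \in \tilde{M}$ lying in the piece $\tilde{M}_{v_x}$, let $w_x$ denote the unique vertex of the subtree $T_{\tilde{K}} \subset T_{\tilde{M}}$ that is closest to $v_x$; since $w_x \in T_{\tilde{K}}$, the set $\tilde{K} \cap \tilde{M}_{w_x}$ is nonempty, and I would fix $\pi(x)$ to be any chosen point in it. Condition (1) of Definition~\ref{defn:contracting} is immediate: for $x \in \tilde{K}$ one has $v_x = w_x$, and both $x$ and $\pi(x)$ lie in $\tilde{K} \cap \tilde{M}_{w_x}$, whose diameter is at most $\delta$ by the hypotheses of Lemma~\ref{lem:2}.

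For condition (2), the key structural observation is that, by uniqueness of tree geodesics and the nearest-point property for subtrees, whenever $w_x \ne w_y$ the tree-geodesic in $T_{\tilde{M}}$ from $v_x$ to $v_y$ passes through both $w_x$ and $w_y$. Choosing $C$ larger than $\delta$ forces $w_x \ne w_y$ whenever $d(\pi(x), \pi(y)) \ge C$, and the special path $\gamma$ from $x$ to $y$ therefore traverses the pieces $\tilde{M}_{w_x}$ and $\tilde{M}_{w_y}$. Moreover, the plane $\tilde{T}_k \subset \tilde{M}_{w_x}$ through which $\gamma$ exits toward $w_y$ corresponds to the first edge of the $T_{\tilde{K}}$-geodesic from $w_x$ to $w_y$, so $\tilde{K} \cap \tilde{T}_k$ is a nonempty disc sitting inside $\tilde{K} \cap \tilde{M}_{w_x}$. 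It therefore suffices to bound the distance from the exit point $x_{k+1} \in \tilde{T}_k$ of $\gamma$ to this disc by a uniform constant, since then the diameter bound yields $d(\pi(x), \gamma) \le C$, and a symmetric argument at $w_y$ handles $\pi(y)$.

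The main obstacle is this uniform bound on $d(x_{k+1}, \tilde{K} \cap \tilde{T}_k)$. The difficulty is that the entry plane $\tilde{T}_{k-1}$ does \emph{not} lie in $T_{\tilde{K}}$, so the shortest-path endpoint $q_{w_x}$ that defines $x_{k+1}$ is constructed from a boundary line $\tilde{T}_{k-1} \cap \tilde{F}_{w_x}$ disjoint from $\tilde{K}$, and Lemma~\ref{lem:1} cannot be applied directly using $\tilde{T}_{k-1}$. To get around this, I would first show that the minimality of the Scott core $M_H^c$, combined with the simply-connectedness of each piece (Lemma~\ref{lem:M_H,isimplyconnected}), forces every vertex of $T_{\tilde{K}}$ to have degree at least two: otherwise a simply connected leaf piece of $M_H^c$ could be removed without changing $\pi_1(M_H^c) = H$, contradicting minimality. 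Hence $\tilde{K} \cap \tilde{M}_{w_x}$ meets at least two $T_{\tilde{K}}$-adjacent planes, so Lemma~\ref{lem:1} yields uniform control on the base-surface projection of the disc $\tilde{K} \cap \tilde{T}_k$ in $\tilde{F}_{w_x}$ in terms of the shortest-path endpoint between two $T_{\tilde{K}}$-adjacent boundary lines. Using the fattened-tree description of $\tilde{F}_{w_x}$ (as in the proof of Lemma~\ref{lem:1}), nearest-point projections onto a fixed boundary line of $\tilde{F}_{w_x}$ from different boundary lines agree up to a uniform additive constant, which lets me compare this $T_{\tilde{K}}$-based endpoint with $q_{w_x}$. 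Combined with the analogous estimate in the fiber of the next piece to control $p_{w_x + 1}$, a Euclidean calculation in the plane $\tilde{T}_k$, mirroring the proof of Lemma~\ref{lem:2}, bounds both coordinates of $x_{k+1}$ against those of a point of $\tilde{K} \cap \tilde{T}_k$ and yields the desired uniform bound.
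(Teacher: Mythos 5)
Your setup (the projection via nearest vertex of the subtree $T_{\tilde{K}}$, condition (1), and the observation that the tree geodesic from $v_x$ to $v_y$ passes through $w_x$ and $w_y$) matches the paper, but the way you verify condition (2) has a genuine gap. Your plan hinges on showing that the corner point $x_{k+1}$ of the special path $\gamma$ on the \emph{first} plane $\tilde{T}_k$ of the $T_{\tilde{K}}$--geodesic is uniformly close to the disc $\tilde{K}\cap\tilde{T}_k$, and to get this you assert that ``nearest-point projections onto a fixed boundary line of $\tilde{F}_{w_x}$ from different boundary lines agree up to a uniform additive constant.'' That assertion is false: in the universal cover of a compact hyperbolic surface with geodesic boundary, the stabilizer of the boundary line $\ell_{out}=\tilde{T}_k\cap\tilde{F}_{w_x}$ acts by translations along $\ell_{out}$ and permutes the other boundary lines, so their projections onto $\ell_{out}$ are spread along the whole line and differ by arbitrarily large amounts. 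Since the entering plane $\tilde{T}_{k-1}$ is determined by where $x$ sits (it need not meet $\tilde{K}$ and is not constrained by the hypotheses of Lemma~\ref{lem:2}), the horizontal coordinate $q_k$ of $x_{k+1}$ is the foot of the shortest path from an essentially arbitrary boundary line, and it can lie far from the (diameter $\le\delta$) set $\tilde{K}\cap\tilde{M}_{w_x}$. So the intermediate statement you are trying to prove --- uniform closeness of $\gamma$ to $\tilde{K}$ already at the first plane --- is not just hard to prove, it is false in general, and no amount of Euclidean bookkeeping in $\tilde{T}_k$ will recover it. (Your degree-$\ge 2$ argument via minimality of $M^c_H$ is a side issue: it imports properties of the Scott core construction that are not among the hypotheses of Lemma~\ref{lem:2}, under which the present lemma is stated, and in any case it does not repair the projection claim.)

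The paper's proof avoids exactly this trap: it never claims $\gamma$ is close to $\tilde{K}$ at the entry piece $w_x$. Instead it introduces the auxiliary special path $\alpha$ from $\pi(x)$ to $\pi(y)$ (both endpoints in $\tilde{K}$), applies Lemma~\ref{lem:2} to $\alpha$ to get $\alpha\subset\mathcal{N}_R(\tilde{K})$, and then uses the coincidence property of special paths (Remark~3.3 in \cite{Sisto11}: corner points in a middle piece depend only on the three consecutive planes crossed) to conclude that $\gamma$ and $\alpha$ agree in the pieces lying a bounded distance (three tree-steps) inside the segment $[w_x,w_y]\subset T_{\tilde{K}}$; requiring $d(\pi(x),\pi(y))\ge C$ with $C=10\delta+R$ guarantees this segment is long enough. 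The point of $\gamma$ witnessing $d(\pi(x),\gamma)\le C$ is therefore found three pieces into the subtree, where the uncontrolled entry data has been ``forgotten,'' and $\pi(x)$ is then within $4\delta+R$ of it by chaining the $\delta$--diameter bounds along $\tilde{K}$. If you want to salvage your approach, you should replace the first-plane estimate by this comparison with the special path between the projections.
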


\begin{proof}
Let $\delta$ and $R$ be the constants given by Lemma~\ref{lem:2}. Let $C =10\delta + R$.
First, we are going to define a $\mathcal{PS}$--projection $\pi \colon \tilde{M} \to \tilde{K}$ on $\tilde{K}$. 

Let $\pi' \colon T_{\tilde{M}} \to T_{\tilde{K}}$ be the projection from the tree $T_{\tilde{M}}$ to the subtree $T_{\tilde{K}}$.

For any $x \in \tilde{M}$, there exists a piece $\tilde{M}_i$ of $\tilde{M}$ such that $x \in \tilde{M}_i$. The piece $\tilde{M}_i$ is corresponding to a vertex, denoted by $v(x)$, in $T_{\tilde{M}}$. Then $\pi'(v(x))$ is a vertex of $T_{\tilde{K}} \subset T_{\tilde{M}}$. Note that $\tilde{K} \cap \tilde{M}_{\pi'(v(x))} \neq \emptyset$ and $diam(\tilde{K} \cap \tilde{M}_{\pi'(v(x))}) \le \delta$. Choose $\pi(x)$ to be a point in $\tilde{K} \cap \tilde{M}_{\pi'(v(x))}$.

By the construction of $\pi$, if $x$ is a point in $\tilde{K}$, then $d(x, \pi(x)) \le \delta < C$. Hence, the map $\pi$ satisfies the condition (1) of Definition~\ref{defn:contracting}. 

We are going to verify (2) of Definition~\ref{defn:contracting}. Let $x$ and $y$ be two points in $\tilde{M}$ such that $d(\pi(x), \pi(y)) \ge C$. Let $\gamma$ be the special path in $\tilde{M}$ connecting $x$ to $y$. We want to show that the distance from $\pi(x)$ and $\pi(y)$ to $\gamma$ is no more than $C$. Let $\kappa$ be the number of Seifert pieces of $\tilde{M}$ where the geodesic $[\pi(x), \pi(y)]$ in $\tilde{M}$ is traveling through. Then we have $d(\pi(x), \pi(y)) \le \kappa \delta$. Indeed, we call these Seifert pieces by $\tilde{M}_{1}, \dots, \tilde{M}_{\kappa}$. Note that $\tilde{K} \cap \tilde{M}_{i} \neq \emptyset$ with $i \in \{1, \dots, \kappa\}$. Choose a point $s_i$ in $\tilde{M}_{i} \cap \tilde{M}_{i+1} \cap \tilde{K}$. We have that $d(\pi(x), s_1) \le \delta$, $d(\pi(y), s_{\kappa-1}) \le \delta$ and $d(s_{i}, s_{i+1}) \le \delta$ with $i \in \{1, \dots, \kappa-2 \}$. Using triangle inequality, we have $d(\pi(x), \pi(y)) \le \kappa \delta$. We now have
\[
10\delta < C \le d(\pi(x), \pi(y)) \le \kappa \delta
\]
Hence, $10 < \kappa$. This shows that the distance of two vertices $\pi'(v(x))$ and $\pi'(v(y))$ is at least $10$.

Choose the geodesic in the tree $T_{\tilde{K}}$ connecting $\pi'(v(x))$ to $\pi'(v(y))$. Choose vertices $v_i$ and $v_j$ in this geodesic such that the distance between two vertices $v_i$ and $\pi'(v(x))$ is $3$ and the distance between two vertices $v_j$ and $\pi'(v(y))$ is $3$. Let $\beta$ be the geodesic in the tree $T_{\tilde{K}}$ connecting $v_i$ to $v_j$. Let $\alpha$ be the special path in $\tilde{M}$ connecting $\pi(x) \in \tilde{M}_{\pi'(v(x))}$ to $\pi(y) \in \tilde{M}_{\pi'(v(y))}$. We remark here there is a subpath of $\gamma$ (and thus this subpath is also a special path) such that this path and $\alpha$ connecting a point in $\tilde{M}_{\pi'(v(x))}$ to a point in $\tilde{M}_{\pi'(v(y))}$. By Remark~3.3 in \cite{Sisto11}, we have two paths $\alpha$ and $\gamma$ coincide in pieces $\tilde{M}_{v}$ where $v$ is any vertex of $\beta$. 

Applying Lemma~\ref{lem:2} to the path $\alpha$, we have $\alpha \subset \mathcal{N}_{R}(\tilde{K})$. In particular, for any vertex $v$ of $\beta$ we have $\alpha \cap \tilde{M}_{v} \subset \mathcal{N}_{R}(\tilde{K} \cap \tilde{M}_{v})$. Since $v_i$ is a vertex of $\beta$, we choose a point $u \in \alpha \cap \tilde{M}_{v_i}$ and a point $u' \in \tilde{K} \cap \tilde{M}_{v_i}$ so that $d(u, u') \le R$. Since $\pi(x) \in \tilde{K} \cap \tilde{M}_{\pi'(v(x))}$, $u' \in \tilde{K} \cap \tilde{M}_{v_i}$ and the distance between $v_i$ and $\pi'(v(x))$ in the tree is $3$, we have $d(\pi(x), u') \le 4\delta$. Thus
\[
d(\pi(x), u) \le d(\pi(x), u') + d(u', u) \le 4\delta + R <C
\]
Since $\alpha$ and $\gamma$ coincide in $\tilde{M}_{v_i}$, it follows that $u \in \gamma$. Thus, $d(\pi(x), \gamma) \le d(\pi(x), u) < C$. Similarly, we can show that $d(\pi(y), \gamma) < C$. Therefore, the theorem is proven.
\end{proof}


\begin{prop}
\label{pkey}
Let $M$ be a graph manifold group. Let $H$ be a finitely generated purely Morse subgroup of $\pi_1(M)$. Then $H$ is strongly quasiconvex.
\end{prop}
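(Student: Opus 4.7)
The plan is to assemble the machinery of Section~\ref{subsubsection:1} and Section~\ref{subsubsection:2} and reduce the proposition to Lemma~\ref{lem:contracting} plus Lemma~\ref{lem:wellknown}. By the reduction described at the start of Section~\ref{subsubsection:1}, I may pass to a finite-index cover and assume $M$ is a flip manifold whose Seifert pieces are products $F_i\times S^1$; this is harmless because strong quasiconvexity and the purely Morse property both descend to the intersection of $H$ with the finite-index subgroup. Equip $M$ with the standard locally $\CAT(0)$ product metric of \cite{KapovichLeeb98}, and lift it to $\tilde{M}$. Let $p\colon M_H\to M$ be the covering corresponding to $H$, let $M_H^c$ be the minimal finite subunion of pieces carrying the fundamental group, and let $K\subset M_H^c$ be the explicit Scott core from \cite{NS19}, so that $\pi_1(K)=H$. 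Set $\tilde{K}=p_{\tilde M\to M_H}^{-1}(K)\subset\tilde{M}$; since $K\hookrightarrow M_H$ is a homotopy equivalence, $\tilde{K}$ is connected and $H$ acts cocompactly on it.

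Next I would verify the two hypotheses of Lemma~\ref{lem:2} for $\tilde{K}$. Hypothesis~(1) is exactly Lemma~\ref{lem:scottcoreuniversal}, which furnishes a uniform $\delta$ with each nonempty $\tilde{K}\cap\tilde{M}_i$ simply connected and of diameter at most $\delta$; the key input here is Lemma~\ref{lem:M_H,isimplyconnected}, which is where the purely Morse hypothesis on $H$ is used (via Proposition~\ref{psis}). Hypothesis~(2) follows from property~(2) of the Scott core construction: each $\tilde{K}\cap E$ with $E$ an edge plane is either empty or a disc (again via simple connectivity of the $M_{H,j}$), and because $\tilde{K}$ is a connected $H$-invariant lift cut along these discs exactly at the edge spaces of the Bass-Serre tree $T_{\tilde{M}}$, the dual graph $T_{\tilde{K}}$ embeds as a subtree of $T_{\tilde{M}}$.

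With both hypotheses of Lemma~\ref{lem:2} confirmed, Lemma~\ref{lem:contracting} gives that $\tilde{K}$ is a $\mathcal{PS}$--contracting subset of $\tilde{M}$, and then Lemma~\ref{lem:wellknown} yields that $\tilde{K}$ is strongly quasiconvex in $\tilde{M}$. Since $H$ acts cocompactly on the connected set $\tilde{K}$, an orbit map exhibits $H$ (with any word metric) at finite Hausdorff distance from $\tilde{K}$ in $\tilde{M}$, so by Lemma~\ref{lem:quasiconvex_qi_invariant} the subgroup $H$ is strongly quasiconvex in $\pi_1(M)$, finishing the proof.

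The main obstacle I expect to encounter is the subtree assertion in hypothesis~(2) of Lemma~\ref{lem:2}: ensuring that $T_{\tilde{K}}\hookrightarrow T_{\tilde{M}}$ is genuinely an embedded subtree requires combining connectedness of $\tilde{K}$ with the compatibility of the dual-graph construction under taking preimages to the universal cover, and this should be spelled out carefully using the explicit structure of the Scott core in \cite{NS19}. Everything else is a direct application of the lemmas already established.
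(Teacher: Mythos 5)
Your overall architecture (Scott core $K$, Lemma~\ref{lem:M_H,isimplyconnected} and Lemma~\ref{lem:scottcoreuniversal} to verify the hypotheses of Lemma~\ref{lem:2}, then Lemma~\ref{lem:contracting} and Lemma~\ref{lem:wellknown}) is exactly the paper's route, but your very first reduction contains a genuine gap: you claim that ``by passing to a finite-index cover I may assume $M$ is a flip manifold.'' The reduction at the start of Section~\ref{subsubsection:1} only lets you pass to a cover in which each Seifert piece is a product $F_i\times S^1$ and there is no twisted $I$--bundle over a Klein bottle; it does \emph{not} make the gluing maps interchange fiber and base directions. A general graph manifold is not virtually a flip manifold (indeed, if it were, the nontrivial theorem of Kapovich--Leeb comparing universal covers would be vacuous, since a finite cover does not change the universal cover), so after your reduction you are still facing a manifold to which Definition~\ref{defn:specialpath}, Lemma~\ref{lem:2} and Lemma~\ref{lem:contracting} --- all stated only for flip manifolds --- do not apply. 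Note also that you cannot repair this by saying $\pi_1(M)$ is quasi-isometric to a flip manifold group, because an abstract quasi-isometry of ambient groups does not carry the subgroup $H$ anywhere.

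The paper's fix is different and is the one missing step you need: by Theorem~2.3 of \cite{KapovichLeeb98} there is a nonpositively curved flip manifold $N$ and a bilipschitz homeomorphism $\phi\colon\tilde{M}\to\tilde{N}$ preserving the geometric decompositions. One transports the \emph{subset} $\tilde{K}$ (not the subgroup) to $\phi(\tilde{K})\subset\tilde{N}$; since $\phi$ is bilipschitz and decomposition-preserving, the conclusions of Lemma~\ref{lem:scottcoreuniversal} and the disc/subtree property of the core persist, so $\phi(\tilde{K})$ satisfies (1) and (2) of Lemma~\ref{lem:2}, is $\mathcal{PS}(\tilde{N})$--contracting by Lemma~\ref{lem:contracting}, hence strongly quasiconvex in $\tilde{N}$ by Lemma~\ref{lem:wellknown}; pulling back through $\phi$ via Lemma~\ref{lem:quasiconvex_qi_invariant} gives strong quasiconvexity of $\tilde{K}$ in $\tilde{M}$, and then your final cocompactness/orbit-map argument correctly yields that $H$ is strongly quasiconvex in $\pi_1(M)$. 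With the flip reduction replaced by this bilipschitz transfer, the rest of your outline (including your concern about the subtree condition, which is handled by the explicit core of \cite{NS19} together with the fact that $\phi$ preserves pieces) goes through as in the paper.
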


\begin{proof}
We equip $M$ with a Riemannian metric. By Theorem~2.3 in \cite{KapovichLeeb98}, there exists a nonpositively curved flip-manifold $N$ and a bilipschitz homeomorphism $\phi \colon \tilde{M} \to \tilde{N}$ such that $\phi$ preserves their geometric decompositions.


Let $M_H \to M$ be the covering space corresponding to the subgroup $H \le \pi_1(M)$. Let $K$ be the compact Scott core of $M_H$ given by Subsection~\ref{subsubsection:1}. Let $\tilde{K}$ be the preimage of $K$ in the universal cover $\tilde{M}$. Using Lemma~\ref{lem:scottcoreuniversal} together with the fact $\phi$ is bilipschitz homeomorphism, we have that the image $\phi(\tilde{K}) \subset \tilde{N}$ satisfies (1) and (2) of Lemma~\ref{lem:2}. By Lemma~\ref{lem:contracting}, $\phi(\tilde{K})$ is $\mathcal{PS}(\tilde{N})$--contracting. Thus, $\phi(\tilde{K})$ is strongly quasiconvex in $\tilde{N}$ by Lemma~\ref{lem:wellknown}. It follows that $\tilde{K}$ is strongly quasiconvex in $\tilde{M}$. As a consequence, $H$ is strongly quasiconvex in $\pi_1(M)$.


\end{proof}

\begin{prop}
\label{actionontree}
Let $M$ be a graph manifold. Let $H$ be a finitely generated subgroup of $\pi_1(M)$. Then the following are equivalent:
\begin{enumerate}
    \item $H$ is purely Morse in $\pi_1(M)$;
    \item The action of $H$ on the Bass–Serre tree of $M$ induces a quasi-isometric embedding from $H$ into the tree.
\end{enumerate}
\end{prop}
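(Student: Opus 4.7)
The direction (2) $\Rightarrow$ (1) is straightforward. Assume the orbit map $H\to T$ is a quasi-isometric embedding. For any infinite-order $h\in H$ the word-lengths $|h^n|_H$ are unbounded (in a finitely generated group an infinite-order element cannot have all its powers inside a fixed ball), so $d_T(h^n v_0, v_0)$ is unbounded and $h$ fixes no vertex of $T$. Since the vertex stabilizers of the action of $\pi_1(M)$ on $T$ are conjugates of Seifert subgroups, $h$ is not conjugate into any Seifert subgroup; by Proposition~\ref{psis} the element $h$ is then Morse. Hence $H$ is purely Morse.

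For (1) $\Rightarrow$ (2), the plan is to route the orbit map through the Scott core constructed in Subsection~\ref{subsubsection:1}. By Proposition~\ref{pkey}, $H$ is strongly quasiconvex and hence undistorted in $\pi_1(M)$; in particular, for any $x_0\in \tilde M$ the orbit map $h\mapsto hx_0$ is a quasi-isometric embedding $H\hookrightarrow \tilde M$. Take the Scott core $K\subset M_H^c$: by Lemma~\ref{lem:M_H,isimplyconnected} every piece $M_{H,i}$ is simply connected, and the Scott core is designed so that each $K\cap M_{H,i}$ is a compact Scott core of $M_{H,i}$ and each edge space of $M_H$ meets $K$ either in the empty set or in a single disc. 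The preimage $\tilde K\subset\tilde M$ is $H$-invariant and inherits the analogous graph-of-spaces structure, whose dual graph embeds as a subtree $T_{\tilde K}\subset T$ of the Bass--Serre tree. Let $\pi\colon\tilde K\to T_{\tilde K}\subset T$ be the $H$-equivariant map sending a point of $\tilde K$ to the vertex of its piece; fix $x_0\in \tilde K$ and set $v_0:=\pi(x_0)$.

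The key geometric input is Lemma~\ref{lem:scottcoreuniversal}, which provides a uniform $\delta>0$ bounding the $\tilde M$-diameter of every piece of $\tilde K$. For $x,y\in\tilde K$ with $k:=d_T(\pi(x),\pi(y))$, the tree geodesic from $\pi(x)$ to $\pi(y)$ lies in $T_{\tilde K}$ and corresponds to a sequence of pieces $P_0,\ldots,P_k$ of $\tilde K$, with consecutive pieces sharing a disc; picking intermediate points $x_i\in P_{i-1}\cap P_i$ and using the triangle inequality yields
$$d_{\tilde M}(x,y)\ \le\ (k+1)\,\delta,\qquad\text{equivalently}\qquad d_T(\pi(x),\pi(y))\ \ge\ \tfrac{1}{\delta}\,d_{\tilde M}(x,y)-1.$$
Applying this with $x=hx_0$ and $y=x_0$, using the equivariance identity $\pi(hx_0)=hv_0$ together with the quasi-isometric embedding $h\mapsto hx_0$, yields the lower bound $d_T(hv_0,v_0)\gtrsim |h|_H$; the matching upper bound $d_T(hv_0,v_0)\lesssim |h|_H$ is the standard Lipschitz estimate for orbit maps. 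The main subtle point will be verifying that consecutive pieces on the tree geodesic genuinely share a disc lying inside $\tilde K$ (so that the intermediate points $x_i$ exist and the triangle inequality actually delivers the bound by $(k+1)\delta$); this is exactly what the disc edge-space property of the Scott core in Subsection~\ref{subsubsection:1} is designed to guarantee.
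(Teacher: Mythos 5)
Your proposal is correct and follows essentially the same route as the paper's proof: (2)$\implies$(1) via Proposition~\ref{psis} and the fact that vertex stabilizers are conjugates of Seifert subgroups, and (1)$\implies$(2) via Proposition~\ref{pkey} (undistortion of $H$), the Scott core with its disc edge-spaces, and the uniform diameter bound $\delta$ of Lemma~\ref{lem:scottcoreuniversal} used to chain through the $\tilde{K}$--pieces along the tree geodesic. The only cosmetic difference is that you obtain the Lipschitz upper bound from the standard orbit-map estimate, whereas the paper derives it from $(n-2)\rho \le d(x,y)$ using the minimal separation $\rho$ between JSJ planes; both work.
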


\begin{proof}
The implication ``$(2)\implies (1)$'' is straight forward. In fact, if some nontrivial element $g$ in $H$ is not Morse, then $g$ is conjugate into a Seifert piece subgroup by Proposition~\ref{psis}. Therefore, $g$ fixes a vertex of the Bass–Serre tree of $M$ which contradicts to Statement (2).

Now we prove the implication ``$(1)\implies (2)$''.

Let $M_H \to M$ be the covering space corresponding to the subgroup $H \le \pi_1(M,x_0)$. Without loss of generality, we can assume that the base point $x_0$ belongs to the interior of some Seifert piece of $M$. Let $\tilde{x}_0$ be a lift point of $x_0$. Let $K$ be the Scott core of $M_H$ given by Section~\ref{subsubsection:1}. Let $\tilde{M}$ be the universal cover of $M$, and let $\tilde{K}$ be the preimage of $K$ in the universal cover $\tilde{M}$. 

Since all nontrivial elements in $H$ are Morse in $\pi_1(M)$, we recall that all pieces $M_{H,i}$ of $M^{c}_H$ are simply connected (see Lemma~\ref{lem:M_H,isimplyconnected}). The point $\tilde{x}_0$ belong to a Seifert piece $\tilde{M}_0$ of $\tilde{M}$, and this Seifert piece corresponds to a vertex in the tree $T_{\tilde{M}}$, denoted by $v$. We first define a map $\Phi \colon H \to T_{\tilde{M}}$ as the following. For any $h \in H$, the point $h \cdot \tilde{x}_0$ belong to a Seifert piece of $\tilde{M}$, and this Seifert piece corresponds to a vertex in $T_{\tilde{M}}$ that we denote by $\Phi(h)$.

Since $H$ acts on $T_{\tilde{M}}$ by isometry and the map $\Phi \colon H \to T_{\tilde{M}}$ is $H$-equivariant, we only need to show that there exist $L \ge 1$ and $C \ge 0$ such that for any $h \in H$ we have \[
\frac{1}{L}d_{H}(e,h) - C \le d_{T_{\tilde{M}}}(v,\Phi(h)) \le Ld_{H}(e,h) + C
\] 

By Proposition~\ref{pkey}, $H$ is strongly quasiconvex in $\pi_1(M)$. Hence $H$ is undistorted in $\pi_1(M)$.
It follows that there exists a positive number $\epsilon > 1$ such that for any $h, k \in H$ such that
\begin{equation}
\tag{$\clubsuit$}
\label{eqn1}
   \frac{1}{\epsilon}d(h\cdot \tilde{x}_0, k \cdot \tilde{x}_0) - \epsilon \le d_{H}(h,k) \le \epsilon d(h \cdot \tilde{x}_0, k \cdot \tilde{x}_0) + \epsilon
\end{equation}

Let $\delta$ be the constant given by Lemma~\ref{lem:scottcoreuniversal}. Let $\rho$ be the minimum distance of any two distinct JSJ planes in $\tilde{M}$. We have that the following property holds. Let $x \neq y$ be two points in $\tilde{K} \subset \tilde{M}$. Let $[x,y]$ be a geodesic in $\tilde{M}$ connecting $x$ to $y$, and let $n$ be the number of Seifert pieces of $\tilde{M}$ which $[x,y]$ passes through. Then 
\begin{equation}
\tag{$\diamondsuit$}
\label{eqn2}
   (n-2)\rho \le d(x,y) \le \delta n
\end{equation}

Let $L =\frac{\epsilon}{\rho} + \delta \epsilon$ and $C = \frac{\epsilon}{\delta} + 2 + \frac{\epsilon^2}{\rho}$.


For each $h \in H$, if $h =e$, then there is nothing to show. We consider the case that $h$ is nontrivial element of $H$. It follows that $\Phi(h) \neq v$ (because each piece $M_{H,i}$ is simply connected). Let $n$ be the number of Seifert pieces of $\tilde{M}$ which $[\tilde{x}_0, h \cdot \tilde{x}_0]$ passes through. We have
\[
d_{T_{\tilde{M}}}(\Phi(h), v) = n -1
\]

Using (\ref{eqn2}) we have
\[
(n-2) \rho \le d(\tilde{x}_0, h \cdot \tilde{x}_0) \le n\delta
\]
Hence,
\[
\frac{1}{\delta}d(\tilde{x}_0, h \cdot \tilde{x}_0) -1 \le d_{T_{\tilde{M}}}(\Phi(h), v) \le 1 + \frac{1}{\rho}d(\tilde{x}_0, h \cdot \tilde{x}_0)
\]
Combining with (\ref{eqn1}) we have
\[
\frac{1}{\delta\epsilon} d_{H}(e,h) - \frac{1}{\delta} -1 \le d_{T_{\tilde{M}}}(\Phi(h), v) \le \frac{\epsilon}{\rho}d_{H}(e,h) + 1 + \frac{\epsilon^2}{\rho}
\]
Thus, \[
\frac{1}{L}d_{H}(e,h) - C \le d_{T_{\tilde{M}}}(v,\Phi(h)) \le Ld_{H}(e,h) + C
\] for all $h \in H$. In other words, $\Phi \colon H \to T_{\tilde{M}}$ is a quasi-isometric embedding. 
\end{proof}

\section{Strongly quasiconvex subgroups of $3$--manifold groups}
\label{sec:sqcinfgmanifold}

In this section, we complete the proof of Theorem~\ref{thm:introduction2}. 
We remark here that we already proved this theorem when $M$ is a graph manifold in Section~\ref{sscfgmg}. 
We now prove the theorem for the case of geometric manifold $M$ except Sol in Section~\ref{subsection:fggeometricmanifold}. In this section, we also show that Theorem~\ref{thm:introduction2} is not true for the case of Sol manifolds. In Section~\ref{sqcfmmf}, we prove Theorem~\ref{thm:introduction2} for the case of mixed manifold $M$ which completes the theorem for the case of nongeometric $3$-manifold $M$. Finally, we complete the theorem for the case of compact, orientable $3$-manifold $M$ 
that does not have a Sol $3$--manifold as a summand in its sphere-disc decomposition in Section~\ref{subsection:fhin3manifold}. 

\subsection{Strongly quasiconvex subgroups of geometric $3$--manifolds}
\label{subsection:fggeometricmanifold}
We recall that a compact orientable irreducible $3$--manifold $M$ with empty or tori boundary is called \emph{geometric} if its interior admits a geometric structure in the sense of Thurston which are $3$--sphere, Euclidean $3$--space, hyperbolic $3$--space, $S^{2} \times \R$, $\mathbb{H}^{2} \times \R$, $\widetilde{SL}(2, \R)$, Nil and Sol.

In the following lemma, we show that all strongly quasiconvex subgroups of Sol $3$--manifold groups are either trivial or of finite index in their ambient groups.

\begin{lem}[Strongly quasiconvex subgroups in Sol $3$--manifold groups]
\label{lem:sqSol}
Let $M$ be a Sol $3$--manifold and let $H$ be a strongly quasiconvex subgroup of $\pi_1(M)$. Then $H$ is trivial or has finite index in $\pi_1(M)$. 
\end{lem}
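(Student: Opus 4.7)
The plan is to reduce to the analysis of abelian-by-cyclic groups $\field{Z}^k \rtimes_{\Phi} \field{Z}$ developed in Appendix~A. Since $M$ carries the Sol geometry, after passing to at most a double cover we may assume $M$ is a torus bundle over $S^1$ with Anosov monodromy $\Phi \in \SL(2, \Z)$; its fundamental group is then $G_1 := \field{Z}^2 \rtimes_{\Phi} \field{Z}$, a subgroup of finite index in $\pi_1(M)$. Set $H' := H \cap G_1$. The inclusion $G_1 \hookrightarrow \pi_1(M)$ is a quasi-isometry and $H'$ lies at finite Hausdorff distance from $H$ in both Cayley graphs, so the quasi-isometry invariance of strong quasiconvexity (Lemma~\ref{lem:quasiconvex_qi_invariant}) forces $H'$ to be strongly quasiconvex in $G_1$. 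It therefore suffices to prove that $H'$ is either trivial or of finite index in $G_1$.

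By Theorem~\ref{tran}, $H'$ is finitely generated and has finite height in $G_1$. Consider the normal subgroup $N := \field{Z}^2 \triangleleft G_1$; since $N$ is abelian it coincides with its own center, and Proposition~\ref{central} then says that every finite-height subgroup of $N$ is finite or of finite index in $N$. Applying Corollary~\ref{cor0} gives a dichotomy: either $H'$ has finite index in $G_1$, in which case we are done, or else $H' \cap N = \{e\}$, forcing $H'$ to embed in $G_1/N \cong \field{Z}$. In the latter case $H'$ is trivial or infinite cyclic, generated by some $g = (v, n)$ with $n \neq 0$. The remaining task is to rule out this infinite cyclic case, and this is precisely the content of the classification in Appendix~A: when $\Phi$ is Anosov, no cyclic subgroup $\langle g \rangle$ with $g \notin N$ is Morse in $\field{Z}^2 \rtimes_{\Phi} \field{Z}$.

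The geometric intuition behind this last step is that $\langle g \rangle$ lies along a \emph{flat} direction in Sol while $N$ is exponentially distorted in $G_1$; exploiting the expanding eigenvector of $\Phi^n$ one constructs conjugates $w g^k w^{-1}$ whose $\field{Z}^2$--coordinate grows exponentially in $k$ while their $G_1$--distance to $\langle g \rangle$ stays controlled, and concatenating these yields $(K, C)$--quasi-geodesics with endpoints on $\langle g \rangle$ that escape arbitrarily far from $\langle g \rangle$, contradicting strong quasiconvexity. I expect this final geometric step to be the real obstacle: while the reductions using $G_1$, $N$ and Corollary~\ref{cor0} are essentially algebraic, making the Sol-geometric picture above rigorous requires genuine control over quasi-geodesics in the Cayley graph of $G_1$, and this is precisely what the more general analysis of $\field{Z}^k \rtimes_{\Phi} \field{Z}$ in Appendix~A is designed to supply.
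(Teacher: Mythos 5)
Your proposal is correct and is essentially the paper's own argument: pass to the double cover whose fundamental group is $\field{Z}^2\rtimes_{\Phi}\field{Z}$ and invoke the Appendix~A classification (Corollary~\ref{cor:nsq}); your intermediate reduction via Theorem~\ref{tran}, Proposition~\ref{central} and Corollary~\ref{cor0} to the infinite cyclic case $\langle t^m z\rangle$ is exactly the content of Lemma~\ref{p1}. The only discrepancy is in how the last step is actually handled: the appendix rules out such cyclic subgroups not by the Anosov-eigenvector quasi-geodesic construction you sketch (and not using the Anosov hypothesis at all), but by citing that solvable groups contain no Morse elements, via the asymptotic-cone cut-point results of the referenced literature.
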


\begin{proof}
Let $N$ be the double cover of $M$ that is a torus bundle with Anosov monodromy $\Phi$. Then $\pi_1(N)$ is an abelian-by-cyclic subgroup $\field{Z}^2\rtimes_{\Phi} \field{Z}$ and it has finite index subgroup in $\pi_1(M)$. Therefore, each strongly quasiconvex subgroups of $\pi_1(N)$ is trivial or has finite index in $\pi_1(N)$ by Corollary~\ref{cor:nsq}. This implies that $H$ is trivial or has finite index in $\pi_1(M)$.
\end{proof}

In the following lemma, we characterize all finite height subgroups of Sol $3$--manifold groups.

\begin{lem}[Finite height subgroups in Sol $3$--manifolds]
\label{lem:fhSol}
Let $M$ be a Sol $3$--manifold. Let $N$ be the double cover of $M$ that is a torus bundle with Anosov monodromy. Let $H$ be a nontrivial finitely generated infinite index subgroup of $\pi_1(M)$. Then $H$ has finite height in $\pi_1(M)$ if and only if $H \cap \pi_1(N)$ is an infinite cyclic subgroup generated by an element in $\pi_1(N)$ that does not belong to the fiber subgroup of $\pi_1(N)$. 
\end{lem}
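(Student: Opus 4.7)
The plan is to reduce the question to the study of finite-height subgroups in the abelian-by-cyclic group $\pi_1(N) \cong \field{Z}^2 \rtimes_\Phi \field{Z}$, which is handled in Appendix A. Since $\pi_1(N)$ has finite index in $\pi_1(M)$, parts (1) and (2) of Proposition~\ref{p0} give that $H$ has finite height in $\pi_1(M)$ if and only if $H \cap \pi_1(N)$ has finite height in $\pi_1(N)$. Because Sol $3$--manifold groups are torsion-free and $H$ is nontrivial, finitely generated, and of infinite index in $\pi_1(M)$, the subgroup $H \cap \pi_1(N)$ is torsion-free, nontrivial, and of infinite index in $\pi_1(N)$.

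For the forward direction I would apply Corollary~\ref{cor0} to the normal fiber subgroup $\field{Z}^2$ of $\pi_1(N)$. Since $\field{Z}^2$ is abelian, its center is all of $\field{Z}^2$ and hence infinite, so Proposition~\ref{central} implies that every infinite finite-height subgroup of $\field{Z}^2$ has finite index; equivalently, each finite-height subgroup of $\field{Z}^2$ is either finite or of finite index in $\field{Z}^2$. Corollary~\ref{cor0} therefore forces $(H\cap \pi_1(N)) \cap \field{Z}^2$ to be finite, and hence trivial by torsion-freeness. Consequently $H \cap \pi_1(N)$ injects into $\pi_1(N)/\field{Z}^2 \cong \field{Z}$, so it is infinite cyclic, and its generator has nonzero image in $\field{Z}$ and therefore does not lie in the fiber subgroup.

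For the reverse direction, assume $H \cap \pi_1(N) = \langle w\rangle$ with $w$ not in the fiber, so that $w=(v,k)$ with $k\neq 0$. I would invoke the analysis of $\field{Z}^k \rtimes_\Phi \field{Z}$ from Appendix~A to conclude that such a cyclic subgroup $\langle w\rangle$ has finite height in $\pi_1(N)$, and then apply Proposition~\ref{p0}(2) to lift this to finite height of $H$ in $\pi_1(M)$. The geometric content behind the appendix statement is that, $\Phi$ being Anosov, the matrix $\Phi^k - I$ is invertible over $\Q$ for every nonzero $k$; solving the commutation equation $(I-\Phi^k)u = (I-\Phi^m)v$ for an element $(u,m)$ commuting with $w$ shows that the centralizer (and in fact the commensurator) of $\langle w\rangle$ is virtually cyclic, which is exactly what is needed to bound the number of essentially distinct conjugates of $\langle w\rangle$ whose intersection remains infinite.

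The principal obstacle is the reverse direction, which depends genuinely on the Appendix~A analysis of abelian-by-cyclic groups with Anosov monodromy; by contrast, the forward direction and the reduction to $\pi_1(N)$ are essentially bookkeeping using Proposition~\ref{p0}, Proposition~\ref{central}, and Corollary~\ref{cor0}.
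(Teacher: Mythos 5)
Your proposal is correct and takes essentially the same route as the paper: reduce to $\pi_1(N)\cong\field{Z}^2\rtimes_\Phi\field{Z}$ via Proposition~\ref{p0} and then invoke the Appendix~A analysis (Proposition~\ref{thm:introduction1}, whose hypotheses hold because the Anosov monodromy has no nontrivial periodic fixed points, cf.\ Example~\ref{exmp:1}). Your forward direction merely unpacks the appendix's own argument (Corollary~\ref{cor0} plus Proposition~\ref{central}) rather than citing the proposition directly, which is the same underlying computation.
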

\begin{proof}
We note that $\pi_1(N)$ is the semi-direct product $\Z^2 \rtimes_\phi \Z$ where $\phi \in GL_{2}(\Z)$ is the matrix corresponding to the Anosov monodromy. We note that $\phi$ is conjugate to a matrix of the form $ \Bigl(\begin{matrix}
a&b \\ c&d
\end{matrix} \Bigr)$ where $ad -bc =1$ and $|a +d| >2$. By Example~\ref{exmp:1}, we have that $\phi^{\ell}$ has no nontrivial fixed point for any nonzero integer $\ell$.

We are going to prove necessity. Assume that $H$ has finite height in $\pi_1(M)$. It is straightforward to see that $H \cap \pi_1(N)$ has infinite index in $\pi_1(N)$ and $H \cap \pi_1(N)$ is not trivial.
Since $H$ has finite height in $\pi_1(M)$ and $\pi_1(N)$ is a subgroup of $\pi_1(M)$, it follows that $H \cap \pi_1(N)$ has finite height in $\pi_1(N)$ (see Proposition~\ref{p0}). By Proposition~\ref{thm:introduction1}, the subgroup $H \cap \pi_1(N)$ is an infinite cyclic subgroup generated by an element that does not belong to the fiber subgroup of $\pi_1(N)$.

We are going to prove sufficiency. Suppose that $H \cap \pi_1(N)$ is an infinite cyclic subgroup generated by an element in $\pi_1(N)$ that does not belong to the fiber subgroup of $\pi_1(N)$. By Proposition~\ref{thm:introduction1}, the subgroup $H \cap \pi_1(N)$ has finite height in $\pi_1(N)$. Since $\pi_1(N)$ has finite index in $\pi_1(M)$, it follows from (2) of Proposition~\ref{p0} that $H$ has finite height in $\pi_1(M)$.

\end{proof}

In the following lemma we study strongly quasiconvex subgroups and finite height subgroups of the fundamental group $\pi_1(M)$, where $M$ is a $3$--manifold $M$ which has a geometric structure modeled on six of eight geometries: $S^3$, $\R^{3}$, $S^{2} \times \R$, Nil, $\widetilde{SL(2,\mathbb{R})}$ or $\field{H}^2\times \field{R}$.

\begin{lem}
\label{lem:fhgeom}
Suppose that a $3$--manifold $M$ has a geometric structure modeled on six of eight geometries: $S^3$, $\E^{3}$, $S^{2} \times \R$, Nil, $\widetilde{SL(2,\mathbb{R})}$ or $\field{H}^2\times \field{R}$. Let $H$ be a nontrivial, finitely generated subgroup of $\pi_1(M)$. Then the following statements are equivalent:
\begin{enumerate}
    \item $H$ has finite height in $\pi_1(M)$.
    \item $H$ is a finite index subgroup of $\pi_1(M)$.
    \item $H$ is strongly quasiconvex in $\pi_1(M)$.
\end{enumerate}
\end{lem}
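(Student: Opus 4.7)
The plan is to prove the cyclic implications $(2) \Rightarrow (3) \Rightarrow (1) \Rightarrow (2)$. The implication $(2) \Rightarrow (3)$ is immediate: if $H$ has finite index in $\pi_1(M)$ then the inclusion $H \hookrightarrow \pi_1(M)$ is a quasi-isometry of Cayley graphs, and since $\pi_1(M)$ is trivially strongly quasiconvex in itself, Lemma~\ref{lem:quasiconvex_qi_invariant} implies that $H$ is strongly quasiconvex in $\pi_1(M)$. The implication $(3) \Rightarrow (1)$ is exactly Theorem~\ref{tran}.

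The heart of the lemma is $(1) \Rightarrow (2)$, which I would prove by case analysis on the geometry of $M$. If $M$ has $S^3$ geometry then $\pi_1(M)$ is finite and there is nothing to show. In each of the remaining five geometries, the key geometric input is that $\pi_1(M)$ admits a finite index subgroup $G'$ whose center $Z(G')$ is infinite: for $\E^3$ one can take $G' \cong \Z^3$ by Bieberbach's theorem; for $S^2 \times \R$ one takes $G' \cong \Z$; for Nil one passes to a torsion-free lattice of integer Heisenberg type, whose center is infinite cyclic; and for $\widetilde{SL(2,\R)}$ and $\H^2 \times \R$ a finite cover is a circle bundle (respectively a product) over a hyperbolic surface, so that the regular fiber supplies an infinite central $\Z$ in $G'$. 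With such $G'$ in hand, the intersection $H' := H \cap G'$ has finite height in $G'$ by Proposition~\ref{p0}(1), and it is infinite (for infinite $H$) since $[G : G'] < \infty$. Proposition~\ref{central} then forces $H'$ to have finite index in $G'$, and therefore $H$ has finite index in $\pi_1(M)$, closing the loop.

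The main obstacle is not algebraic but geometric: locating the infinite central subgroup after passing to a finite cover in each of the five non-spherical cases. This is a standard consequence of the (virtual) Seifert fibered structure of these manifolds, so once it is invoked the argument reduces entirely to applications of Proposition~\ref{p0}, Proposition~\ref{central}, Theorem~\ref{tran}, and Lemma~\ref{lem:quasiconvex_qi_invariant} already recorded in Section~\ref{sec:background}.
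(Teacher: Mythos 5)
Your proposal is correct and follows essentially the same route as the paper: (3)$\Rightarrow$(1) via Theorem~\ref{tran}, (2)$\Rightarrow$(3) as an immediate consequence of finite index, and (1)$\Rightarrow$(2) by passing in each non-spherical geometry to a finite index subgroup with infinite center (the same five subgroups the paper exhibits) and then applying Proposition~\ref{p0} and Proposition~\ref{central}. The only cosmetic difference is that you route (2)$\Rightarrow$(3) through Lemma~\ref{lem:quasiconvex_qi_invariant}, where the paper simply notes it is obvious.
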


\begin{proof}
By Theorem~\ref{tran}, we have (3) implies (1). It is obvious that (2) implies (3). For the rest of the proof, we only need to show that (1) implies (2). 

If the geometry of $M$ is spherical, then its fundamental group is finite; hence it is obvious that $H$ has finite index in $\pi_1(M)$. 

If the geometry of $M$ is either $S^2 \times \field{R}$, $\field{E}^3$, $\textit{Nil}$, $\widetilde{SL(2,\mathbb{R})}$ or $\field{H}^2\times \field{R}$, then $\pi_1(M)$ has a finite index subgroup $K$ such that the centralizer $Z(K)$ of $K$ is infinite. More precisely, 
\begin{enumerate}
    \item If the geometry of $M$ is $S^2 \times \R$, then there exists a finite index subgroup $K \le \pi_1(M)$ such that $K$ is isomorphic to $\Z$. It is obvious that the centralizer $Z(K)$ is infinite.
    \item If the geometry of $M$ is $\mathbb{E}^3$, then there exists a finite index subgroup $K \le \pi_1(M)$ such that $K$ is isomorphic to $\mathbb{Z}^3$. Hence the centralizer $Z(K)$ is infinite.
    \item If the geometry of $M$ is $\textit{Nil}$, then $\pi_1(M)$ contains a discrete Heisenberg subgroup of finite index $K$ which has infinite centralizer.
    \item If the geometry of $M$ is $\mathbb{H}^2 \times \R$, then $M$ is finitely covered by $M' = \Sigma \times S^1$ where $\Sigma$ is a compact surface with negative Euler characteristic. Therefore, $K=\pi_1(M')$ has finite index in $\pi_1(M)$ and $Z(K)$ is infinite.
    \item If $M$ has a geometry modeled on $\widetilde{SL(2,\R)}$, then $M$ is finitely covered by a circle bundle over surface $M'$. Thus $K=\pi_1(M')$ has finite index in $\pi_1(M)$. We note that $Z(K)$ is infinite since it contains the fundamental group of a regular fiber of $M'$.
\end{enumerate}
By Proposition~\ref{p0}, the subgroup $H\cap K$ has finite height in the subgroup $K$. Therefore by Proposition~\ref{central}, the subgroup $H\cap K$ is trivial or has finite index in the subgroup $K$. It follows that $H$ has finite index in $\pi_1(M)$.
\end{proof}

The rest of this section is devoted to the study of strongly quasiconvex subgroups and finite height subgroups of hyperbolic $3$--manifold groups.

\begin{rem}
\label{rem:fhclosedhyp}
If $M$ is a closed hyperbolic $3$--manifold, it is well-known that a finitely generated subgroup $H$ has finite height in $\pi_1(M)$ if and only if $H$ is strongly quasiconvex (equivalently, $H$ is geometrically finite). Indeed, by Subgroup Tameness Theorem, any finitely generated subgroup of $\pi_1(M)$ is either geometrically finite or virtual fiber surface subgroup. If $H$ has finite height in $\pi_1(M)$, then $H$ must be geometrically finite, otherwise $H$ is a virtual fiber surface subgroup that is not a finite height subgroup. Since $M$ is closed hyperbolic $3$--manifold, it is well-known that $\pi_1(M)$ is a hyperbolic group. Since $H$ is geometrically finite, it follows that $H$ is strongly quasiconvex in $\pi_1(M)$. Conversely, if $H$ is strongly quasiconvex in $\pi_1(M)$, then $H$ has finite height by a result of \cite{GMRS98}.
\end{rem}

\begin{prop}
\label{prop:fhtorihyp}
Let $M$ be a hyperbolic $3$--manifold with tori boundary. Let $H$ be a finitely generated subgroup of $\pi_1(M)$. Let $M_H \to M$ be the covering space corresponding to the subgroup $H \le \pi_1(M)$. Then the following statements are equivalent.
\begin{enumerate}
    \item $H$ has finite height in $\pi_1(M)$.
    \item $\partial M_H$ consists only planes, tori
    \item $H$ is strongly quasiconvex in $\pi_1(M)$.
\end{enumerate}
\end{prop}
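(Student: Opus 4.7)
The plan is to prove the cycle $(3)\Rightarrow (1)\Rightarrow (2)\Rightarrow (3)$. The implication $(3)\Rightarrow (1)$ is immediate from Theorem~\ref{tran}. Throughout, the underlying structural fact I exploit is that $\pi_1(M)$ is hyperbolic relative to the collection $\mathcal{P}=\{\pi_1(T_1),\dots,\pi_1(T_k)\}$ of peripheral $\Z^2$'s coming from the boundary tori, putting Theorem~\ref{relhyp111} at my disposal. The relevant classification of boundary components is that each component of $\partial M_H$ covers some boundary torus $T_i$ and is therefore a plane, an open annulus, or a torus, according as $H\cap g\pi_1(T_i)g^{-1}$ is trivial, infinite cyclic, or of finite index in $g\pi_1(T_i)g^{-1}$. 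Thus $(2)$ is equivalent to requiring every peripheral intersection $H\cap gPg^{-1}$ to be either trivial or of finite index in $gPg^{-1}$.

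For $(1)\Rightarrow (2)$, I argue by contrapositive. If $(2)$ fails, some intersection $H\cap P$ with $P=g\pi_1(T_i)g^{-1}\cong \Z^2$ is an infinite cyclic subgroup $\langle t\rangle$ of infinite index in $P$. I choose infinitely many representatives $p_1,p_2,\dots\in P$ of distinct cosets of $\langle t\rangle$ in $P$. Since any $p_j^{-1}p_i$ lies in $P$, an equality $p_iH=p_jH$ would force $p_j^{-1}p_i\in H\cap P=\langle t\rangle$, contradicting the choice; so the cosets $p_iH$ are pairwise distinct. Each conjugate $p_iHp_i^{-1}$ contains $p_i\langle t\rangle p_i^{-1}=\langle t\rangle$ by commutativity of $P$, hence $\bigcap_i p_iHp_i^{-1}\supseteq \langle t\rangle$ is infinite, violating finite height.

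For $(2)\Rightarrow (3)$, condition $(2)$ makes each $H\cap gPg^{-1}$ either trivial or of finite index in $gPg^{-1}$, and in both cases strongly quasiconvex there. By Theorem~\ref{relhyp111}, it remains to show that $H$ is finitely generated and undistorted in $\pi_1(M)$. Here I invoke the Subgroup Tameness Theorem of Agol~\cite{Agol04} and Calegari--Gabai~\cite{CG06}: every finitely generated subgroup of $\pi_1(M)$ is either geometrically finite or virtually the fiber subgroup of a finite cover fibering over $S^1$. In the virtual fiber case, $M_H$ is homotopy equivalent to a compact surface with nonempty boundary crossed with $\R$, and all of its boundary components are open annuli, contradicting $(2)$. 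Thus $(2)$ forces $H$ to be geometrically finite, hence relatively quasiconvex and, since the peripheral $\Z^2$'s are themselves undistorted, undistorted in $\pi_1(M)$; Theorem~\ref{relhyp111} then yields strong quasiconvexity.

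The main obstacle is $(2)\Rightarrow (3)$, specifically the step establishing undistortedness of $H$: it rests on the deep Subgroup Tameness Theorem to eliminate virtual-fiber behaviour, after which the standard dictionary between geometric finiteness, relative quasiconvexity, and undistortedness closes the loop. By contrast, $(1)\Rightarrow (2)$ is purely elementary, using only abelianness of the peripheral subgroups to generate the required family of infinite-intersection conjugates.
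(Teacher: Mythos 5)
Your proposal is correct and follows essentially the same route as the paper: relative hyperbolicity of $\pi_1(M)$ with respect to the boundary $\Z^2$'s, the classification of components of $\partial M_H$, tameness to rule out the virtual fiber case and deduce geometric finiteness (hence undistortedness), and Theorem~\ref{relhyp111} to conclude strong quasiconvexity, with $(3)\Rightarrow(1)$ from Theorem~\ref{tran}. The only cosmetic difference is that you prove $(1)\Rightarrow(2)$ by a direct coset argument in the abelian peripheral subgroup, whereas the paper quotes Propositions~\ref{p0} and~\ref{central}, whose proofs are the same computation.
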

\begin{proof}
We first prove that (1) implies (2). Let $T_1, T_2, \dots, T_n$ be the tori boundary of $M$. We note that $\pi_1(M)$ is hyperbolic relative to the collection $\PP = \{\pi_1(T_1), \dots, \pi_1(T_n) \}$. Since $M_H \to M$ is a covering spaces, it follows that $\partial M_H$ contains only tori, planes and cylinders.
If $H$ has finite height in $\pi_1(M)$, then by Proposition~\ref{p0} $H \cap g \pi_1(T_i) g^{-1}$ has finite height in $g \pi_1(T_i) g^{-1}$ for any $g \in \pi_1(M)$ and $i \in \{1, \dots, n\}$. Since $g \pi_1(T_i) g^{-1}$ is isomorphic to $\Z^2$, it follows from Proposition~\ref{central} that $H \cap g \pi_1(T_i) g^{-1}$ is trivial or has finite index in $g \pi_1(T_i) g^{-1}$. Thus, $\partial M_H$ does not contain a cylinder.

Now, we are going to prove that (2) implies (3). Assume that $\partial M_H$ consists only tori and planes. It follows that $H$ is geometrically finite subgroup of $\pi_1(M)$. Indeed, if not, then $H$ is geometrically infinite. Hence $M_H$ is homeomorphic to $\Sigma_{H} \times \R$ or a twisted $\R$--bundle $\Sigma_{H} \tilde{\times} \R$ for some compact surface $\Sigma_H$ (with nonempty boundary). Thus, $\partial M_{H}$ contains cylinders that contradicts to our assumption.
Since $H$ is geometrically finite, it follows that $H$ is undistorted in $\pi_1(M)$. Moreover, since $\partial M_H$ consists only planes and tori, it follows that the intersection of $H$ with each conjugate $gPg^{-1}$ of peripheral subgroup in $\PP$ is either trivial or has finite index in $gPg^{-1}$. Thus $H \cap gPg^{-1}$ is strongly quasiconvex in $gPg^{-1}$. By Theorem~\ref{relhyp111}, it follows that $H$ is strongly quasiconvex in $\pi_1(M)$. By Theorem~\ref{tran}, we have (3) implies (1). Therefore, the theorem is proved. 
\end{proof}

\begin{rem}[Morse elements in geometric $3$--manifold groups]
\label{rem:morseingeomanifold}
Let $M$ be a geometric $3$--manifold. If $M$ has a geometric structure modeled on six of eight geometries: $S^3$, $\R^{3}$, Nil, Sol, $\widetilde{SL(2,\mathbb{R})}$ or $\field{H}^2\times \field{R}$, then $\pi_1(M)$ has no Morse element by Lemma~\ref{lem:sqSol} and Lemma~\ref{lem:fhgeom}. If $M$ has a geometric structure modeled $S^2 \times \field{R}$, then $\pi_1(M)$ is virtually an infinite cyclic subgroup. Therefore, all infinite order elements of $\pi_1(M)$ are Morse. Now we consider the case $M$ has a geometric structure modeled on $\field{H}^3$. If $M$ is a closed manifold, then all infinite order elements of $\pi_1(M)$ are Morse. If $M$ is a hyperbolic $3$--manifold with tori boundary, then we let $T_1, T_2, \dots, T_n$ be the tori boundary of $M$. We note that $\pi_1(M)$ is hyperbolic relative to the collection $\PP = \{\pi_1(T_1), \dots, \pi_1(T_n) \}$. Therefore, an infinite order element $g$ in $\pi_1(M)$ is Morse if and if $g$ does not conjugate to an element of $\pi_1(T_i)$ (see Proposition~\ref{Morseelementinrelhyp}).
\end{rem}

\subsection{Strongly quasiconvex subgroups of mixed $3$--manifold groups}
\label{sqcfmmf}

In this section, we prove Theorem~\ref{thm:introduction2} and Theorem~\ref{coolformanifold} for the case of mixed $3$--manifold $M$ which completes the proof of these theorems for the case of nongeometric $3$--manifold groups. 

\begin{prop}
\label{prop:themixedmanifoldcase}
Let $M$ be a mixed $3$--manifold. Let $H$ be a finitely generated subgroup of $\pi_1(M)$. Then $H$ has finite height if and only if $H$ is strongly quasiconvex.
\end{prop}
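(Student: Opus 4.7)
The forward implication, that strongly quasiconvex subgroups have finite height, is immediate from Theorem~\ref{tran}, so the entire content of the proposition lies in the converse. My plan is to exploit the relative hyperbolicity of $\pi_1(M)$: by the result of Dahmani \cite{Dah03} (or Bigdely--Wise \cite{BW13}) quoted in the introduction, $\pi_1(M)$ is hyperbolic relative to the collection $\PP$ consisting of fundamental groups of maximal graph manifold components, isolated Seifert pieces, and isolated JSJ tori. So I would reduce the problem to checking the hypotheses of Theorem~\ref{relhyp111}, namely that $H$ is undistorted and that $H\cap gPg^{-1}$ is strongly quasiconvex in $gPg^{-1}$ for every conjugate of a peripheral subgroup.

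For the undistortedness step, I would cite the work of Sun and the first author \cite{NS19}, which is precisely the tool flagged in the overview as giving undistortedness of finite-height finitely-generated subgroups of mixed manifold groups. Once $H$ is known to be undistorted in $\pi_1(M)$, Proposition~4.11 of \cite{Tran2017} (together with the fact that every peripheral subgroup $gPg^{-1}$ is itself strongly quasiconvex in the ambient relatively hyperbolic group) yields that $H\cap gPg^{-1}$ is finitely generated and undistorted in $gPg^{-1}$. Meanwhile, by part (1) of Proposition~\ref{p0}, the finite height of $H$ in $\pi_1(M)$ passes to finite height of $H\cap gPg^{-1}$ in $gPg^{-1}$.

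The remaining task is to verify, peripheral type by peripheral type, that a finitely generated undistorted finite-height subgroup of $gPg^{-1}$ is strongly quasiconvex there. When $gPg^{-1}$ is the fundamental group of an isolated JSJ torus it is isomorphic to $\Z^2$, which has infinite center, so by Proposition~\ref{central} the intersection is either finite or of finite index, and in either case it is strongly quasiconvex. When $gPg^{-1}$ is the fundamental group of an isolated Seifert piece, Lemma~\ref{lem:fhgeom} (applied after passing to a finite-index subgroup modeled on $\mathbb{H}^2\times\R$ or $\widetilde{SL(2,\R)}$) gives the same dichotomy and hence strong quasiconvexity. When $gPg^{-1}$ is the fundamental group of a maximal graph manifold component, Theorem~\ref{coolforgraphmanifold} directly upgrades the finite-height hypothesis to strong quasiconvexity inside that component.

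With all three peripheral cases handled, Theorem~\ref{relhyp111} packages everything and concludes that $H$ is strongly quasiconvex in $\pi_1(M)$. The genuinely hard step in this scheme is the undistortedness of $H$: the peripheral analysis is essentially a bookkeeping exercise using results established earlier in the paper, but the passage from ``finite height in a mixed manifold group'' to ``undistorted'' requires the geometric input from \cite{NS19} and, without it, there is no clean algebraic route. Everything else amounts to assembling pieces already in place.
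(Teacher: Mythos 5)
Your overall scheme --- reduce via relative hyperbolicity to Theorem~\ref{relhyp111}, establish undistortedness of $H$, and then verify strong quasiconvexity of the intersections with peripheral subgroups case by case --- is exactly the route the paper takes, and your peripheral analysis (Proposition~\ref{central} for the $\Z^2$ peripherals, Theorem~\ref{coolforgraphmanifold} for the maximal graph manifold components, with Proposition~4.11 of \cite{Tran2017} plus Proposition~\ref{p0} supplying finite generation, undistortedness, and finite height of the intersections $H\cap gPg^{-1}$) matches the paper's argument essentially verbatim.

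The gap is in the step you yourself single out as the hard one: the undistortedness of $H$. There is no statement in \cite{NS19} of the form ``a finitely generated finite-height subgroup of a mixed $3$--manifold group is undistorted,'' so this step cannot be disposed of by citation alone. Theorem~1.4 of \cite{NS19} is a distortion criterion phrased in terms of the almost fiber surface $\Phi(H)$ (see Section~3.1 of \cite{Sun18}), and the finite-height hypothesis enters precisely in verifying that criterion. The paper does this as follows: pass to the cover $M_H\to M$ and the finite union of pieces $M_H^c$; each piece $M_{H,i}$ of $M_H^c$ covers a piece $M_i$ of $M$, and by part (1) of Proposition~\ref{p0} the subgroup $\pi_1(M_{H,i})$ has finite height in $\pi_1(M_i)$; if $M_i$ is Seifert fibered this forces $\pi_1(M_{H,i})$ to be finite or of finite index, and if $M_i$ is hyperbolic it forces $\pi_1(M_{H,i})$ to be strongly quasiconvex, hence geometrically finite, in $\pi_1(M_i)$ (Proposition~\ref{prop:fhtorihyp}); consequently the almost fiber surface $\Phi(H)$ is empty, and only then does Theorem~1.4 of \cite{NS19} give that $H$ is undistorted in $\pi_1(M)$. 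Without this piece-by-piece verification your undistortedness claim is unsupported; once you add it, your proof coincides with the paper's.
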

\begin{proof}
The sufficiency is proved by Theorem~\ref{tran}. We are going to prove the necessity. We will assume that $H$ has infinite index in $\pi_1(M)$, otherwise it is trivial. Let $M_1, \cdots, M_k$ be the maximal graph manifold components of the JSJ decomposition of $M$, let $S_1, \dots, S_{\ell}$ be the tori in the boundary of $M$ that adjoint a hyperbolic piece, and let $T_1, \dots,T_m$ be the tori in the JSJ decomposition of $M$ that separate two hyperbolic components of the JSJ decomposition. Then $\pi_1(M)$ is hyperbolic relative to $\PP = \{\pi_1(M_p)\} \cup \{\pi_1(S_q)\} \cup \{\pi_1(T_r)\}$ (see \cite{Dah03} or \cite{BW13}).

By Theorem~\ref{relhyp111}, to see that $H$ is strongly quasiconvex in $\pi_1(M)$, we only need to show that $H$ is undistorted in $\pi_1(M)$ and $H \cap gPg^{-1}$ is strongly quasiconvex in $gPg^{-1}$ for each conjugate $gPg^{-1}$ of peripheral subgroup in $\PP$. In the rest of the proof, we are going to verify the following claims.

\underline{Claim~1:} $H$ is undistorted in $\pi_1(M)$. 

Indeed, let $p \colon M_H \to M$ be the covering space corresponding to the subgroup $H \le \pi_1(M)$. Let $M^{c}_{H} \subset M_{H}$ be the submanifold of $M_H$ given by Subsection~\ref{subsubsection:1}. For each piece $M_{H,i}$ of $M^{c}_{H}$, let $M_i$ be a piece of $M$ such that $M_{H,i}$ covers $M_i$. Since $H$ has finite height in $\pi_1(M)$, it follows from (1) in Proposition~\ref{p0} that $\pi_1(M_{H,i})$ has finite height in $\pi_1(M_i)$. If $M_i$ is a Seifert piece of $M$, then $\pi_1(M_{H,i})$ is either finite or has finite index in $\pi_1(M_i)$. If $M_i$ is a hyperbolic piece of $M$, then $\pi_1(M_{H,i})$ is strongly quasiconvex in $\pi_1(M_i)$ (see Proposition~\ref{prop:fhtorihyp}), hence $\pi_1(M_{H,i})$ is geometrically finite in $\pi_1(M_i)$. Thus, the ``almost fiber surface'' $\Phi(H)$ of the subgroup $H \le \pi_1(M)$ (see Section~3.1 in \cite{Sun18}) is empty. It follows from Theorem~1.4 in \cite{NS19} that $H$ is undistorted in $\pi_1(M)$.

\underline{Claim~2:} $H \cap gPg^{-1}$ is strongly quasiconvex in $gPg^{-1}$ for each conjugate $gPg^{-1}$ of peripheral subgroup in $\PP$.

We first show that each subgroup $H \cap gPg^{-1}$ is finitely generated. In fact, since $gPg^{-1}$ is strongly quasiconvex and $H$ is undistorted, then $H \cap gPg^{-1}$ is strongly quasiconvex in $H$ by Proposition 4.11 in \cite{Tran2017}. This implies that $H \cap gPg^{-1}$ is finitely generated. We now prove that $H \cap gPg^{-1}$ is strongly quasiconvex. Since $H$ has finite height in $\pi_1(M)$, it follows that $H \cap gPg^{-1}$ has finite height in $gPg^{-1}$ (see Proposition~\ref{p0}). If $P$ is either $\pi_1(S_q)$ or $\pi_1(T_r)$ for some $\pi_1(S_q), \pi_1(T_r) \in \PP $, then $gPg^{-1}$ is isomorphic to $\Z^2$. By Proposition~\ref{central}, $H \cap gPg^{-1}$ is either finite or has finite index in $gPg^{-1}$. It implies that $H \cap gPg^{-1}$ is strongly quasiconvex in $gPg^{-1}$. If $P$ is $\pi_1(M_j)$ for some maximal graph manifold component $M_j$, then $H \cap gPg^{-1}$ is strongly quasiconvex in $gPg^{-1}$ by Theorem~\ref{coolforgraphmanifold}.



\end{proof}

The following proposition is the direct result of Proposition~\ref{Morseelementinrelhyp} and Proposition~\ref{psis}.

\begin{prop}[Morse elements in mixed manifold groups]
\label{psis1}
Let $M$ be a mixed $3$--manifold group. Then a nontrivial group element $g$ in $\pi_1(M)$ is Morse if and only if $g$ is not conjugate into any Seifert subgroups. 
\end{prop}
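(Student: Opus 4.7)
The plan is to combine Proposition~\ref{Morseelementinrelhyp} with Proposition~\ref{psis} through the relatively hyperbolic structure on $\pi_1(M)$ established in the proof of Proposition~\ref{prop:themixedmanifoldcase}. Recall that $\pi_1(M)$ is hyperbolic relative to $\PP = \{\pi_1(M_p)\} \cup \{\pi_1(S_q)\} \cup \{\pi_1(T_r)\}$, where the $M_p$ are the maximal graph manifold components of the JSJ decomposition and the $S_q, T_r$ are the $\Z^2$ tori (boundary tori adjoining hyperbolic pieces and JSJ tori separating two hyperbolic pieces, respectively). So an infinite-order $g \in \pi_1(M)$ is Morse if and only if either $g$ is a hyperbolic element of $(\pi_1(M),\PP)$ or $g$ is conjugate to a Morse element of some $P \in \PP$.

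First I would rule out the abelian peripherals as sources of Morse elements. Each $\pi_1(S_q)$ and $\pi_1(T_r)$ is isomorphic to $\Z^2$, so it has infinite center and by Proposition~\ref{central} together with Theorem~\ref{tran} contains no Morse element. Therefore the relatively hyperbolic characterization reduces to: $g$ is Morse in $\pi_1(M)$ iff $g$ is hyperbolic in $(\pi_1(M),\PP)$ or $g$ is conjugate to a Morse element of some graph manifold peripheral $\pi_1(M_p)$.

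Next I would apply Proposition~\ref{psis} within each $\pi_1(M_p)$: its Morse elements are exactly those not conjugate (within $\pi_1(M_p)$) into any Seifert subgroup of $M_p$. Since every Seifert piece $S$ of $M$ lies in some graph manifold component $M_p$ (a single Seifert piece is itself a graph manifold), the Seifert subgroups of $\pi_1(M)$ are precisely the conjugates of $\pi_1(S)$ for Seifert pieces $S$ sitting inside some $M_p$. With this identification, the forward direction follows: if $g$ is Morse and is conjugate into a Seifert subgroup $\pi_1(S) \le \pi_1(M_p)$, then $g$ conjugates into the peripheral $\pi_1(M_p)$, Proposition~\ref{Morseelementinrelhyp} forces the conjugate to be Morse in $\pi_1(M_p)$, and this contradicts Proposition~\ref{psis}. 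Conversely, if $g$ is not conjugate into any Seifert subgroup, then either $g$ is hyperbolic in $(\pi_1(M),\PP)$ (hence Morse by Proposition~\ref{Morseelementinrelhyp}), or $g$ conjugates into some $\pi_1(M_p)$ with image avoiding every Seifert subgroup of $M_p$, giving Morseness in $\pi_1(M_p)$ by Proposition~\ref{psis} and therefore in $\pi_1(M)$ by Proposition~\ref{Morseelementinrelhyp}.

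This is essentially bookkeeping once the two cited propositions are in hand; the only nontrivial point is eliminating Morse contributions from the $\Z^2$ peripherals, which is immediate from Proposition~\ref{central} as above.
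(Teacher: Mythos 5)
Your overall route is the one the paper intends: Proposition~\ref{psis1} is presented there as a direct consequence of Proposition~\ref{Morseelementinrelhyp} and Proposition~\ref{psis}, via the relatively hyperbolic structure whose peripherals are the graph-manifold components and the $\Z^2$ tori, and your forward direction is essentially fine (the transfer of Morseness into $\pi_1(M_p)$ is cleanest via Proposition~\ref{pcool}, which is what the proof of Proposition~\ref{Morseelementinrelhyp} actually uses, but that is cosmetic). The genuine gap is the dichotomy you assert in the converse: from ``$g$ is not hyperbolic rel $\PP$'' you conclude that $g$ is conjugate into some graph-manifold peripheral $\pi_1(M_p)$. Proposition~\ref{Morseelementinrelhyp} only gives that a non-hyperbolic element is conjugate into \emph{some} member of $\PP$, and $\PP$ also contains the abelian peripherals $\pi_1(S_q)$ (boundary tori of $M$ adjoining hyperbolic pieces) and $\pi_1(T_r)$ (JSJ tori separating two hyperbolic pieces). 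With the identification you fix explicitly --- Seifert subgroups are exactly the conjugates of $\pi_1(S)$ for Seifert pieces $S$ of the JSJ decomposition --- an infinite-order element conjugate into $\pi_1(S_q)$ or $\pi_1(T_r)$ need not be conjugate into any Seifert subgroup, yet by your own first paragraph it is not Morse. Concretely, glue two one-cusped finite-volume hyperbolic manifolds along their cusp tori: the result is a mixed manifold with no Seifert pieces at all, so ``not conjugate into a Seifert subgroup'' holds vacuously, while the elements of the JSJ torus subgroup are not Morse. Hence the step ``not conjugate into a Seifert subgroup and not hyperbolic $\implies$ conjugate into some $\pi_1(M_p)$'' fails.

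This is precisely the delicate point hidden in the statement; the paper's own proof is a one-line citation of the same two propositions and does not address it either. The characterization only becomes correct once the elements conjugate into the $\Z^2$ peripherals are also excluded: either read ``Seifert subgroup'' broadly enough to cover the subgroups carried by the isolated JSJ tori and the boundary tori adjoining hyperbolic pieces, or phrase the conclusion as ``$g$ is Morse iff $g$ is hyperbolic rel $\PP$ or conjugate to a Morse element of a graph-manifold peripheral''. With such an amendment your bookkeeping (including the elimination of Morse elements from the $\Z^2$ peripherals via Proposition~\ref{central} and Theorem~\ref{tran}, and the use of Proposition~\ref{psis} inside each $\pi_1(M_p)$) goes through verbatim; without it, the converse direction as written cannot be completed.
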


The following proposition is the direct result of Theorem~\ref{coolforgraphmanifold} and Proposition~\ref{corcoolcool}.

\begin{prop}[Purely Morse subgroups in mixed manifold groups]
\label{ptran}
Let $M$ be a mixed $3$--manifold group and let $H$ be an undistorted purely Morse subgroup of $\pi_1(M)$. Then $H$ is stable in $\pi_1(M)$. 
\end{prop}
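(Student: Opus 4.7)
The plan is to deduce this directly from Corollary~\ref{corcoolcool} applied to the relatively hyperbolic structure on $\pi_1(M)$. Since $M$ is a mixed $3$--manifold, $\pi_1(M)$ is hyperbolic relative to the collection $\PP$ consisting of the fundamental groups of maximal graph manifold components, of isolated JSJ tori separating hyperbolic pieces, and of boundary tori adjacent to hyperbolic pieces (see \cite{Dah03} or \cite{BW13}). By Corollary~\ref{corcoolcool}, it suffices to verify that each peripheral subgroup $P \in \PP$ has the property that every undistorted purely Morse subgroup of $P$ is stable.

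First I would dispatch the abelian peripherals. If $P \cong \Z^2$ (the case of an isolated JSJ torus or a boundary torus adjacent to a hyperbolic piece), then $P$ has infinite center, so by Proposition~\ref{central} and Theorem~\ref{tran} no nontrivial infinite cyclic subgroup of $P$ can be strongly quasiconvex in $P$. Hence $P$ contains no Morse element, so any purely Morse subgroup of $P$ is torsion, hence trivial, which is vacuously stable.

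Next I would handle the graph manifold peripherals. Let $P = \pi_1(M_p)$ for some maximal graph manifold component $M_p$, and let $K$ be an undistorted purely Morse subgroup of $P$. I would first argue $K$ has infinite index in $P$: otherwise $K$ contains a finite index subgroup of every Seifert vertex subgroup of $P$, which by Proposition~\ref{psis} forces $K$ to contain non-Morse elements (using that graph manifold pieces are torsion-free). Once $K$ has infinite index, Theorem~\ref{coolforgraphmanifold} applies: the purely Morse hypothesis gives that $K$ is strongly quasiconvex in $P$ and moreover $K$ is free. Being free and strongly quasiconvex, $K$ is stable in $P$.

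With these two verifications in hand, Corollary~\ref{corcoolcool} immediately yields that every undistorted purely Morse subgroup of $\pi_1(M)$ is stable, which is exactly the proposition. The main (very mild) obstacle is the bookkeeping step for the graph manifold peripherals, namely confirming that the finite-index case cannot occur under the purely Morse hypothesis so that Theorem~\ref{coolforgraphmanifold} applies verbatim; everything else is a direct invocation of results already established in the paper.
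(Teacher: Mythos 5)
Your proposal is correct and follows essentially the same route as the paper, which derives the proposition directly from Corollary~\ref{corcoolcool} together with Theorem~\ref{coolforgraphmanifold} applied to the peripheral structure of the mixed manifold group. The only difference is that you spell out the verification that each peripheral subgroup satisfies the hypothesis of Corollary~\ref{corcoolcool} (the $\Z^2$ peripherals having no Morse elements, and the finite-index case being excluded for graph manifold peripherals), details the paper leaves implicit.
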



\subsection{Strongly quasiconvex subgroups of finitely generated $3$--manifold groups}
\label{subsection:fhin3manifold}
In Section~\ref{sscfgmg}, Section~\ref{subsection:fggeometricmanifold} and Section~\ref{sqcfmmf}, we have shown that finitely generated finite height subgroups and strongly quasiconvex subgroups are equivalent in the fundamental group of a $3$--manifold with empty or tori boundary (except Sol $3$--manifold). In this subsection, we extend these results to arbitrary finitely generated $3$--manifold groups.

\begin{prop}
\label{prop:finiteheightinhigherboundary}
Let $M$ be a compact orientable irreducible $3$--manifold that has trivial torus decomposition and has at least one higher genus boundary component. Let $H$ be a finitely generated subgroup of $\pi_1(M)$.

Then $H$ has finite height in $\pi_1(M)$ if and only if $H$ is strongly quasiconvex.
\end{prop}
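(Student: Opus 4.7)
The direction ``strongly quasiconvex $\implies$ finite height'' is immediate from Theorem~\ref{tran}, so the work lies entirely in the converse. Assume $H$ has finite height in $\pi_1(M)$. Because $M$ is compact, orientable, irreducible, has trivial torus decomposition, and has nonempty boundary with at least one higher genus component, Geometrization forces $M$ to be either Seifert fibered (with hyperbolic base orbifold, so the geometry is $\mathbb{H}^2\times\mathbb{R}$ up to finite cover) or hyperbolic (with higher genus boundary and possibly some tori components). I will handle these two cases separately.

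In the Seifert fibered case, $\pi_1(M)$ has a finite index subgroup $K$ with infinite center (coming from the fiber direction), as in Lemma~\ref{lem:fhgeom}. By Proposition~\ref{p0}(1), $H\cap K$ has finite height in $K$, and then Proposition~\ref{central} forces $H\cap K$ to be finite or of finite index in $K$. Hence $H$ is finite or of finite index in $\pi_1(M)$, and in either case $H$ is strongly quasiconvex.

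In the hyperbolic case, I would invoke the filling construction of Sun from \cite{Sun18} (exactly as outlined in the paper's introduction, and already used in the mixed manifold case) to produce a hyperbolic $3$--manifold $N$ whose boundary is empty or consists only of tori, such that $M$ embeds as a submanifold of $N$ and $\pi_1(M)$ is strongly quasiconvex in $\pi_1(N)$. By Theorem~\ref{tran}, $\pi_1(M)$ has finite height in $\pi_1(N)$; combined with the assumption that $H$ has finite height in $\pi_1(M)$, Proposition~\ref{p0}(3) yields that $H$ has finite height in $\pi_1(N)$. Since $N$ is hyperbolic with empty or tori boundary, Remark~\ref{rem:fhclosedhyp} (closed case) or Proposition~\ref{prop:fhtorihyp} (tori boundary case) then gives that $H$ is strongly quasiconvex in $\pi_1(N)$. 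Finally, because $H\le\pi_1(M)\le\pi_1(N)$ with $\pi_1(M)$ strongly quasiconvex in $\pi_1(N)$, Proposition~4.11 of \cite{Tran2017} yields that $H$ is strongly quasiconvex in $\pi_1(M)$.

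The substantive ingredient is the filling construction: producing $N$ and verifying that $\pi_1(M)$ is strongly quasiconvex in $\pi_1(N)$. This is precisely the obstacle that the authors resolve by citing \cite{Sun18} and \cite{NS19}; once it is in hand, the rest of the argument is a routine chain of the already-established transfer lemmas (Proposition~\ref{p0}, Proposition~\ref{prop:fhtorihyp}, Remark~\ref{rem:fhclosedhyp}, and Proposition~4.11 of \cite{Tran2017}). The Seifert case requires no new technology beyond what is used in Lemma~\ref{lem:fhgeom}.
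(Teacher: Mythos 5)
Your main case follows the paper's route exactly: build a finite volume hyperbolic $N$ by pasting hyperbolic pieces with totally geodesic boundary along the higher genus boundary components (Section~6.3 of \cite{Sun18}), transfer finite height of $H$ up to $\pi_1(N)$ via Theorem~\ref{tran} and Proposition~\ref{p0}, apply Remark~\ref{rem:fhclosedhyp} or Proposition~\ref{prop:fhtorihyp} inside $N$, and pull strong quasiconvexity back to $\pi_1(M)$ by Proposition~4.11 of \cite{Tran2017}. One cosmetic remark first: your opening dichotomy is not correct as stated. A compact orientable Seifert fibered $3$--manifold has only torus boundary components, so the hypothesis of a higher genus boundary component already excludes the Seifert fibered alternative; your first case is vacuous (and the paper makes no such case division). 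This does no logical harm, but the appeal to Geometrization there is misstated.

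The genuine gap is in how you handle the key claim that $\pi_1(M)$ is strongly quasiconvex in $\pi_1(N)$. You present this as something the filling construction ``produces'' and attribute it to \cite{Sun18} and \cite{NS19}, but neither reference asserts it; \cite{Sun18} only supplies the manifold $N$, and \cite{NS19} is used elsewhere in the paper (for undistortion in the graph/mixed cases). The paper proves the claim: by the construction $\pi_1(M)\le\pi_1(N)$ is not a virtual fiber, hence geometrically finite and therefore undistorted; if $N$ is closed, $\pi_1(N)$ is hyperbolic and undistortedness gives strong quasiconvexity, while if $N$ has cusps one must additionally check that the cover $N_M\to N$ corresponding to $\pi_1(M)$ has no cylindrical boundary components --- equivalently, that $\pi_1(M)$ meets each conjugate of a cusp subgroup trivially or with finite index --- before Proposition~\ref{prop:fhtorihyp} can be invoked. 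This cusp condition is the real content: geometric finiteness alone does not yield strong quasiconvexity (a geometrically finite subgroup meeting a $\field{Z}^2$ cusp subgroup in an infinite cyclic subgroup is undistorted but not strongly quasiconvex), and without it both the first link of your chain (finite height of $\pi_1(M)$ in $\pi_1(N)$ via Theorem~\ref{tran}) and your use of Proposition~\ref{prop:fhtorihyp} for $\pi_1(M)$ rest on an unestablished assertion. Note also that the final pullback step only requires $\pi_1(M)$ to be undistorted in $\pi_1(N)$, which is exactly how the paper applies Proposition~4.11 of \cite{Tran2017}.
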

\begin{proof}
If $H$ has finite index in $\pi_1(M)$, then the result is obviously true. Hence, we will assume that $H$ has infinite index in $\pi_1(M)$. The sufficiency is followed from Theorem~\ref{tran}. We are going to prove necessity.

We paste hyperbolic $3$--manifolds with totally geodesic boundaries to $M$ to get a finite volume hyperbolic $3$--manifold $N$ (for example, see Section~6.3 in \cite{Sun18}).

Claim: $\pi_1(M)$ is strongly quasiconvex in $\pi_1(N)$. Indeed, by the construction of $N$, the subgroup $\pi_1(M) \le \pi_1(N)$ is not a virtual fiber, hence it is geometrically finite. Thus, $\pi_1(M)$ is undistorted. If $N$ is closed then $\pi_1(N)$ is a hyperbolic group. Since $\pi_1(M)$ is undistorted in $\pi_1(N)$, it follows that $\pi_1(M)$ is strongly quasiconvex in $\pi_1(N)$. As a consequence, $\pi_1(M)$ has finite height in $\pi_1(N)$ (see \cite{GMRS98}). If $N$ has nonempty boundary, then $N$ has tori boundary, we let $N_M \to N$ be the covering space corresponding to the subgroup $\pi_1(M) \le \pi_1(N)$. We note that $\partial N_{M}$ does not contain any cylinder. By Proposition~\ref{prop:fhtorihyp}, we have $\pi_1(M)$ is strongly quasiconvex in $\pi_1(N)$. The claim is established.

We are now going to show that $H$ is strongly quasiconvex in $\pi_1(M)$ if $H$ has finite height in $\pi_1(M)$. Indeed, by the above claim, $\pi_1(M)$ has finite height in $\pi_1(N)$. Since $H$ has finite height in $\pi_1(M)$, it follows that $H$ has finite height in $\pi_1(N)$ (see Proposition~\ref{p0}). Applying Remark~\ref{rem:fhclosedhyp} and Proposition~\ref{prop:fhtorihyp} to the hyperbolic manifold $N$ (in the case $\partial N = \emptyset$ and $\partial N \neq \emptyset$ respectively), we have $H$ is strongly quasiconvex in $\pi_1(N)$. Since $\pi_1(M)$ is undistorted in $\pi_1(N)$, it follows from Proposition~4.11 in \cite{Tran2017} that $H = H \cap \pi_1(M)$ is strongly quasiconvex in $\pi_1(M)$. 
\end{proof}

We now prove Proposition~\ref{pintro} and Theorem~\ref{thm:introduction2}.

\begin{proof}[Proof of Proposition~\ref{pintro}]
Since $M$ is a compact, orientable $3$--manifold, it decomposes into irreducible, $\partial$--irreducible pieces $M_1, \dots, M_k$ (by the sphere-disc decomposition). In particular, $\pi_1(M)$ is the free product $\pi_1(M_1)* \pi_1(M_2)* \cdots *\pi_1(M_k)$. Let $G_i = \pi_1(M_i)$. We remark here that $\pi_1(M)$ is hyperbolic relative to the collection $\PP = \{G_1, \cdots, G_k \}$.

 Suppose that $M_j$ is a Sol manifold for some $j \in \{1, 2, \dots, k\}$. By Lemma~\ref{lem:sqSol} and Lemma~\ref{lem:fhSol}, there exists a finitely generated, finite height subgroup $A$ of $\pi_1(M_j)$ such that $A$ is not strongly quasiconvex in $\pi_1(M_j)$.
 
Since each peripheral subgroup in a relatively hyperbolic group is strongly quasiconvex, it follows that $\pi_1(M_j)$ has finite height in $\pi_1(M)$ by Theorem~\ref{tran}. Since $A$ has finite height in $\pi_1(M_j)$ and $\pi_1(M_j)$ has finite height in $\pi_1(M)$, it follows from Proposition~\ref{p0} that $A$ has finite height in $\pi_1(M)$.

We note that $A$ is not strongly quasiconvex in $\pi_1(M)$. Indeed, by way of contradiction, suppose that $A$ is strongly quasiconvex in $\pi_1(M)$. Since $\pi_1(M_j)$ is undistorted in $\pi_1(M)$, it follows from Proposition~4.11 in \cite{Tran2017} that
that $A = A \cap \pi_1(M_j)$ is strongly quasiconvex in $\pi_1(M_j)$. This contradicts to the fact that $A$ is not strongly quasiconvex in $\pi_1(M_j)$. 
\end{proof}

\begin{proof}[Proof of Theorem~\ref{thm:introduction2}]

By Theorem~\ref{tran}, if $H$ is strongly quasiconvex in $\pi_1(M)$, then $H$ has finite height in $\pi_1(M)$. Thus, for the rest of the proof, we only need to show that $H$ is strongly quasiconvex in $\pi_1(M)$ if $H$ has finite height. We also assume that $H$ has infinite index in $\pi_1(M)$, otherwise the result is vacuously true.



We can reduce to the case that $M$ is irreducible and $\partial$--irreducible by the following reason:
Since $M$ is compact, orientable $3$--manifold, it decomposes into irreducible, $\partial$--irreducible pieces $M_1, \dots, M_k$ (by the sphere-disc decomposition). In particular, $\pi_1(M)$ is a free product $\pi_1(M_1)* \pi_1(M_2)* \cdots *\pi_1(M_k)$. Let $G_i = \pi_1(M_i)$. We remark here that $\pi_1(M)$ is hyperbolic relative to the collection $\PP = \{G_1, \cdots, G_k \}$. 
By Kurosh Theorem, the subgroup $H \cong H_{1} * \cdots H_{m} *F_k$ where each subgroup $H_i = H \cap g_{i}G_{i_j}g^{-1}_{i}$ for some $g_{i} \in \pi_1(M)$, and $i_j \in \{1, \dots, k\}$. We remark here that $H$ is strongly quasiconvex in $\pi_1(M)$ if finitely generated finite height subgroups of $G_i$ are strongly quasiconvex. Indeed, to see this, we note that $H_i$ has finite height in $g_{i}G_{i_j}g^{-1}_{i}$ since $H$ has finite height in $\pi_1(M)$. It follows that $H_i$ is strongly quasiconvex in $g_{i}G_{i_j}g^{-1}_{i}$ (by the assumption above), and hence $H_i$ is undistorted in $g_{i}G_{i_j}g^{-1}_{i}$. The argument in the second paragraph of the proof of Theorem~1.3 in \cite{NS19} tells us that $H$ must be undistorted in $\pi_1(M)$. Using Theorem~\ref{relhyp111}, we have that $H$ is strongly quasiconvex in $\pi_1(M)$. We note that by the hypothesis, none of pieces $M_1, \dots, M_k$ supports the Sol geometry.
Therefore, for the rest of the proof we only need to show that finitely generated finite height subgroups in the fundamental group of a compact, orientable, irreducible, $\partial$--irreducible manifold that does not support the Sol geometry are strongly quasiconvex.


We consider the following cases.

{\bf Case~1}: $M$ has trivial torus decomposition.

Case~1.1: $M$ has empty or tori boundary. In this case, $M$ has a geometric structure modeled on seven of eight geometries: $S^3$, $\R^{3}$, $S^{2} \times \R$, Nil, $\widetilde{SL(2,\mathbb{R})}$, $\field{H}^2\times \field{R}$, $\mathbb{H}^{3}$. By Lemma~\ref{lem:fhgeom}, Remark~\ref{rem:fhclosedhyp}, and Proposition~\ref{prop:fhtorihyp}, we have that $H$ is strongly quasiconvex in $\pi_1(M)$.

Case~1.2: $M$ has higher genus boundary. In this case, it follows from Proposition~\ref{prop:finiteheightinhigherboundary} that $H$ is strongly quasiconvex.

{\bf Case~2}: $M$ has nontrivial torus decomposition.

Case~2.1: $M$ has empty or tori boundary. Then $M$ is a nongeometric $3$--manifolds. By Theorem~\ref{coolforgraphmanifold} and Proposition~\ref{prop:themixedmanifoldcase}, $H$ is strongly quasiconvex.

Case~2.2: $M$ has a boundary component of genus at least $2$. By Section~6.3 in \cite{Sun18}, we paste hyperbolic $3$--manifolds with totally geodesic boundaries to $M$ to get a $3$--manifold $N$ with empty or tori boundary. The new manifold $N$ satisfies the following properties.
\begin{enumerate}
    \item $M$ is a submanifold of $N$ with incompressible tori boundary.
    \item The torus decomposition of $M$ also gives the torus decomposition of $N$.
    \item Each piece of $M$ with a boundary component of genus at least $2$ is contained in a hyperbolic piece of $N$.
\end{enumerate}
We remark here that it has been proved in \cite{NS19} that $\pi_1(M)$ is undistorted in $\pi_1(N)$ (see the proof of Case~1.2 in the proof of Theorem~1.3 in \cite{NS19}). 

Claim: $\pi_1(M)$ is strongly quasiconvex in $\pi_1(N)$. 

We are going to prove the claim above. Let $M'_{1}, \dots, M'_{k}$ be the pieces of $M$ that satisfies (3).
Since $N$ is a mixed $3$--manifold, we equip $N$ with a nonpositively curved metric as in \cite{Leeb95thesis}. This metric induces a metric on the universal cover $\tilde{N}$. Let $N'_i$ be the hyperbolic piece of $N$ such that $M'_i$ is contained in $N'_i$.
Note that by Proposition~\ref{prop:fhtorihyp}, the subgroup $\pi_1(M'_i)$ is strongly quasiconvex in $\pi_1(N'_i)$. 
Since there are only finitely many pieces $M'_{1}, \dots, M'_{k}$, it follows that for any $K \ge 1$, $C \ge 0$, there exists $R = R (K,C)$ such that for any $(K,C)$--quasi-geodesic in $\tilde{N}'_v$ with endpoints in $\tilde{M}'_{v}$ (for some $\tilde{M}'_{v} \subset \tilde{M}$ covers a $M'_i \subset N'_i$) then this quasi-geodesic lies in the $R$--neighborhood of $\tilde{M}'_v$.

Let $\gamma$ be any $(K,C)$--quasi-geodesic in $\tilde{N}$. By Taming quasi-geodesic Lemma (see Lemma~1.11 page 403 in \cite{MR1744486}), we can assume that $\gamma$ is a continuous path. By the construction of the manifold $N$, the path $\gamma$ can be decomposed into a concatenation $\gamma = \alpha_1 \cdot \beta_1 \cdot \alpha_2 \cdot \beta_2 \cdots \alpha_{\ell} \cdot \beta_{\ell} \cdot \alpha_{\ell+1}$ such that for each $j$, the subpath $\alpha_j$ is a subset of $\tilde{M}$, and $\beta_j$ intersects $\tilde{M}$ only at its endpoints. Here $\alpha_1$ and $\alpha_{\ell+1}$ might degenerate to points. Moreover, there are pieces $\tilde{M}'_j$ and $\tilde{N}'_j$ of $\tilde{M}$ and $\tilde{N}$ respectively such that $\tilde{M}'_j \subset \tilde{N}'_j$, $\beta_{j} \subset \tilde{N}'_j$, and the endpoints of $\beta_j$ in $\tilde{M}'_j$.

 We have that $\beta_j \subset \mathcal{N}_{R}(\tilde{M}'_j) \subset \mathcal{N}_{R}(\tilde{M})$. Thus $\gamma \subset \mathcal{N}_{R}(\tilde{M})$. Therefore, $\tilde{M}$ is strongly quasiconvex in $\tilde{N}$. It follows that $\pi_1(M)$ is strongly quasiconvex in $\pi_1(N)$. 
Since $H$ has finite height in $\pi_1(M)$ and $\pi_1(M)$ has finite height in $\pi_1(N)$, it follows that $H$ has finite height in $\pi_1(N)$ by Proposition~\ref{p0}. Since $N$ is a nongeometric $3$--manifold, it follows from Theorem~\ref{coolforgraphmanifold} and Proposition~\ref{prop:themixedmanifoldcase} that $H$ is strongly quasiconvex in $\pi_1(N)$. Since $\pi_1(M)$ is undistorted in $\pi_1(N)$, it follows from Proposition~4.11 \cite{Tran2017} that $H = H \cap \pi_1(M)$ is strongly quasiconvex in $\pi_1(M)$.
\end{proof}

\begin{rem}
\label{rem:fg}
In this remark, we explain how to reduce the study on finite height subgroups and strongly quasiconvex subgroups from all finitely generated $3$--manifold groups to the case of compact, orientable $3$--manifold groups.

Since $\pi_1(M)$ is a finitely generated group, it follows from the Scott core theorem that $M$ contains a compact codim--$0$ submanifold such that the inclusion map of the submanifold into $M$ is a homotopy equivalence. In particular, the inclusion induces an isomorphism on their fundamental groups. We note that $H$ has finite height (resp. is strongly quasiconvex) in $\pi_1(M)$ if and only if the preimage of $H$ under the isomorphism has finite height (resp. is strongly quasiconvex) in the fundamental group of the submanifold. Thus, we can assume that the manifold $M$ is compact.

We can also assume that $M$ is orientable. Indeed, let $M'$ be a double cover of $M$ that is orientable. We remark here that a finitely generated subgroup $H$ of $\pi_1(M)$ has finite height if and only if $H': = H \cap \pi_1(M')$ has finite height in $\pi_1(M')$. Moreover, $H$ is strongly quasiconvex in $\pi_1(M)$ if and only if $H'$ is strongly quasiconvex in $\pi_1(M')$. Therefore, without loss of generality, we can assume that $M$ is orientable.
\end{rem}

We are now ready for the proof of Theorem~\ref{coolformanifold}.
\begin{proof}[Proof of Theorem~\ref{coolformanifold}]
Using the similar argument as in Remark~\ref{rem:fg} and in the proof of Theorem~\ref{thm:introduction2}, we can reduce the theorem to the case where $M$ is compact, orientable, irreducible and $\partial$--irreducible. We leave it to the interesting reader. We consider the following cases:

Case~1: $M$ has trivial torus decomposition. 

Case~1.1: $M$ has empty or torus boundary. In this case, $M$ has geometric structure modeled on $S^3$, $\R^{3}$, $S^{2} \times \R$, Nil, $\widetilde{SL(2,\mathbb{R})}$, $\field{H}^2\times \field{R}$, $\mathbb{H}^{3}$. By Remark~\ref{rem:morseingeomanifold}, if the geometry of $M$ is $S^3$, $\R^{3}$, Nil, $\widetilde{SL(2,\mathbb{R})}$, $\field{H}^2\times \field{R}$, then $H$ must be finite. Thus $H$ is stable. If the geometry of $M$ is $S^{2} \times \R$, then from the fact $\pi_1(M)$ is virtually $\Z$, it follows that $H$ is stable. If the geometry of $M$ is $\mathbb{H}^3$, then $H$ is stable by the following reasons. If $M$ is closed, then $\pi_1(M)$ is hyperbolic group. Since $H$ is undistorted in $\pi_1(M)$, it follows that $H$ is a hyperbolic group and $H$ is strongly quasiconvex in $\pi_1(M)$. Hence $H$ is stable. If $M$ has tori boundary, then $H$ is stable by combination of Remark~\ref{rem:morseingeomanifold} and Theorem~\ref{relhyp112}.

Case~1.2: $M$ has higher genus boundary. Let $N$ be the finite volume hyperbolic $3$--manifold constructed in the proof of Proposition~\ref{prop:finiteheightinhigherboundary}.
It follows from Case~1.1 above that all undistorted purely Morse subgroups of $\pi_1(N)$ are stable. By Statement (3) of Corollary~\ref{ccool}, we have that all undistorted purely Morse subgroups of $\pi_1(M)$ are stable. Thus, $H$ is stable in $\pi_1(M)$.

Case~2: $M$ has nontrivial torus decomposition. If the geometry of $M$ is Sol, then $H$ is trivial by Remark~\ref{rem:morseingeomanifold}. Thus, we can assume that $M$ does not support the Sol geometry.

Case~2.1: $M$ has empty or tori boundary. In this case, $M$ is either a graph manifold or a mixed manifold. If $M$ is a graph manifold, then $H$ is stable by Theorem~\ref{coolforgraphmanifold} (see (2) implies (4)). If $M$ is a mixed manifold, then it follows from Proposition~\ref{ptran} that $H$ is stable.

Case~2.2: $M$ has higher genus boundary. Let $N$ be the mixed $3$--manifold constructed in Case~2.2 of the proof of Theorem~\ref{thm:introduction2}. We recall that we have shown in the proof of Theorem~\ref{thm:introduction2} that $\pi_1(M)$ is strongly quasiconvex in $\pi_1(N)$. Since $\pi_1(N)$ has the property that all undistorted purely Morse subgroups of $\pi_1(N)$ are stable (see Case~2.1 above), it follows that all undistorted purely Morse subgroups of $\pi_1(M)$ are stable by Statement (3) of Corollary~\ref{ccool}. Thus $H$ is stable in $\pi_1(M)$.
\end{proof}


\section*{Appendix A}
\subsection*{Finite height subgroups, malnormal subgroups, and strongly quasiconvex subgroups of $\field{Z}^k\rtimes_{\Phi} \field{Z}$}
\label{sec:semidirectproduct}
In this section, we study strongly quasiconvex subgroups and finite height subgroups of abelian-by-cyclic subgroups $\field{Z}^k\rtimes_{\Phi} \field{Z}$. First, we define a \emph{fixed point} of a group automorphism $\Phi:\!\field{Z}^k \to \field{Z}^k$ is a group element $z$ in $\field{Z}^k$ such that $\Phi(z)=z$. The main result of this section is the following proposition.
 


\begin{propappend}
\label{thm:introduction1}
Let $\Phi:\!\field{Z}^k \to \field{Z}^k$ be a group automorphism. Then the group $G=\field{Z}^k\rtimes_{\Phi} \field{Z}=\langle\field{Z}^k,t|tzt^{-1}=\Phi(z)\rangle$ has a finite height subgroup which is not trivial and has infinite index if and only if for every non-zero integer $\ell$, the group automorphism $\Phi^{\ell}$ has no nontrivial fixed point. Moreover, a nontrivial, infinite index subgroup $H$ has finite height if and only if $H$ is a cyclic subgroup generated by a group element $g\in G-\field{Z}^k$.
\end{propappend}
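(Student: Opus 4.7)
The plan is to combine Corollary~\ref{cor0} (applied with $N=\field{Z}^k$) with a direct centralizer computation for cyclic subgroups generated by elements outside $\field{Z}^k$. First I would record that every subgroup $K$ of $\field{Z}^k$ is normal, so its essentially distinct conjugates are indexed by distinct cosets of $K$ in $\field{Z}^k$ and all equal $K$; consequently $K$ has finite height iff $K$ is finite or has finite index. Corollary~\ref{cor0} then forces $H\cap\field{Z}^k=\{e\}$ for every nontrivial, infinite index, finite height subgroup $H\le G$, so the projection $G\to G/\field{Z}^k\cong\field{Z}$ is injective on $H$; hence such an $H$ must be infinite cyclic, $H=\langle g\rangle$ for some $g=(z,n)\in G\setminus\field{Z}^k$, necessarily with $n\neq 0$. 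This reduces the entire problem to understanding when $\langle g\rangle$ itself has finite height.

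For the sufficiency direction, suppose $\Phi^\ell$ has no nontrivial fixed point for every $\ell\neq 0$ and fix $g=(z,n)$ with $n\neq 0$. The semidirect product law gives
\[
(w,s)\,g^i\,(w,s)^{-1}=\bigl(w+\Phi^s(u_i)-\Phi^{in}(w),\,in\bigr),\qquad u_i=\sum_{j=0}^{i-1}\Phi^{jn}(z),
\]
so $(w,s)\in C_G(g^i)$ iff $w-\Phi^{in}(w)=u_i-\Phi^s(u_i)$. In particular $C_G(g^i)\cap\field{Z}^k$ coincides with the $\Phi^{in}$-fixed subgroup, which is trivial by hypothesis; hence the second-coordinate projection is injective on $C_G(g^i)$, which is therefore infinite cyclic and contains $\langle g\rangle$ with index $n/d_i\le n$, where $d_i>0$ generates its image in $\field{Z}$. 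I would then take any $m$ essentially distinct conjugates $h_1\langle g\rangle h_1^{-1},\dots,h_m\langle g\rangle h_m^{-1}$ whose intersection is infinite; the intersection contains some $\langle g^p\rangle$ with $p>0$, and comparing second coordinates forces $h_jg^ph_j^{-1}=g^p$, i.e.\ $h_j\in C_G(g^p)$, for every $j$. Since the cosets $h_j\langle g\rangle$ are distinct yet all lie in $C_G(g^p)/\langle g\rangle$, one obtains $m\le n$, so $\langle g\rangle$ has height at most $n$ and is in particular nontrivial, of infinite index, and of finite height.

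For the necessity direction, suppose $\Phi^\ell(v)=v$ for some $\ell\neq 0$ and $v\neq 0$, and take any $g=(z,n)\in G\setminus\field{Z}^k$. The identity $\Phi^{n\ell}(v)=(\Phi^\ell)^n(v)=v$ plugged into the conjugation formula gives $(jv,0)\,g^\ell\,(jv,0)^{-1}=g^\ell$ for every $j\in\field{Z}$, so each conjugate $(jv,0)\langle g\rangle(jv,0)^{-1}$ contains $\langle g^\ell\rangle$. The cosets $(jv,0)\langle g\rangle$ are pairwise distinct because $((j-j')v,0)\in\langle g\rangle$ would force the second coordinate $mn$ to vanish, hence $m=0$ and $(j-j')v=0$, hence $j=j'$. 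Intersecting these infinitely many essentially distinct conjugates still contains $\langle g^\ell\rangle$, which is infinite, so $\langle g\rangle$ fails to have finite height. Combined with the reduction to cyclic subgroups, this shows no nontrivial infinite index subgroup of $G$ has finite height in this case, and the ``moreover'' statement falls out of the same analysis.

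The main obstacle is the sufficiency step: the centralizer equation $w-\Phi^{in}(w)=u_i-\Phi^s(u_i)$ does not obviously have unique solutions over $\field{Z}^k$, so rather than trying to solve it directly I use the hypothesis only through the softer conclusion $C_G(g^i)\cap\field{Z}^k=\{0\}$. Turning this into a uniform height bound is the delicate step, and the key observation is that any configuration of conjugates with infinite intersection forces all the conjugators into a single cyclic overgroup $C_G(g^p)$ of index at most $n$ over $\langle g\rangle$, capping the number of essentially distinct cosets that can participate.
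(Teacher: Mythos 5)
Your proposal is correct, and its skeleton matches the paper's: the reduction via Corollary~\ref{cor0} (with $N=\field{Z}^k$, whose finite height subgroups are indeed finite or of finite index) to infinite cyclic subgroups $\langle g\rangle$ with $g\in G\setminus\field{Z}^k$ is exactly the paper's Lemma~\ref{p1}, and your necessity argument (conjugating by the powers $(jv,0)$ of a vector fixed by $\Phi^{n\ell}$, which commute with $g^\ell$) is the paper's Lemma~\ref{p2}. The genuine divergence is the sufficiency step: the paper's Lemma~\ref{p3} argues by contradiction, using pigeonhole on the $t$--exponents of $m+1$ conjugators to produce an element $z_0(t^mz)^q$ with $z_0\in\field{Z}^k\setminus\{e\}$ centralizing $(t^mz)^p$, which forces a nontrivial fixed point of $\Phi^{mp}$; you instead use fixed-point-freeness to show $C_G(g^p)\cap\field{Z}^k$ is trivial, hence $C_G(g^p)$ is infinite cyclic containing $\langle g\rangle$ with index at most $|n|$, and you cap the number of participating cosets by that index. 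Both mechanisms rest on the same conjugation identity; yours is a direct count rather than a contradiction and makes the source of the height bound (the index of $\langle g\rangle$ in a centralizer) more transparent, while the paper's pigeonhole version avoids discussing the centralizer's structure. Two small repairs to your write-up: the claim that the infinite intersection contains some $\langle g^p\rangle$ requires the standard normalization $h_1=e$ (conjugate the whole family by $h_1^{-1}$), or else one should work with $h_1C_G(g^p)h_1^{-1}$ and conclude $h_j\in h_1C_G(g^p)$; and the index bound should read $|n|/d_i\le |n|$ so as to cover $n<0$. Neither affects the validity of the argument.
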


The proof of Proposition~\ref{thm:introduction1} is a combination of Lemma~\ref{p1}, Lemma~\ref{p2}, and Lemma~\ref{p3} as follows.

\begin{lemappend}
\label{p1}
Let $G=\field{Z}^k\rtimes_{\Phi} \field{Z}=\langle\field{Z}^k,t|tzt^{-1}=\Phi(z)\rangle$ and $H$ a nontrivial subgroup of infinite index of $G$. Assume that $H$ is a finite height subgroup. Then $H$ is a cyclic subgroup generated by $t^{m}z$ where $m$ is a positive integer and $z$ is an element in $\field{Z}^k$.
\end{lemappend}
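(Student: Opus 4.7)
The plan is to first show that $K := H\cap\Z^k = 0$, and then deduce the conclusion from the quotient map $\pi\colon G \to G/\Z^k \cong \Z$. The second step is quick, so the bulk of the work is establishing $K=0$.

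For this, I would argue by contradiction, using the key observation that $\Z^k$ is abelian and normal, so $zKz^{-1}=K\subseteq zHz^{-1}$ for every $z\in\Z^k$; that is, every $\Z^k$-conjugate of $H$ already contains $K$. Suppose $K\neq 0$ and split on the index $[\Z^k:K]$. If $[\Z^k:K]=\infty$, the cosets $\{zH : z\in\Z^k\}$ are in bijection with $\Z^k/K$, since $z_1H=z_2H$ forces $z_2-z_1\in H\cap\Z^k=K$; this produces infinitely many essentially distinct conjugates of $H$, each containing the infinite subgroup $K$, so any finite sub-intersection is infinite, contradicting finite height. If $[\Z^k:K]<\infty$, the standard index identity $[G:H]=[\Z:\pi(H)]\cdot[\Z^k:K]$ together with $[G:H]=\infty$ forces $\pi(H)=0$, so $H=K$ is a finite-index subgroup of $\Z^k$. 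But then the cosets $\{t^nH : n\in\Z\}$ are pairwise distinct (since $t^{n-n'}\in H\subseteq\Z^k$ forces $n=n'$), giving infinitely many essentially distinct conjugates $\Phi^n(H)$, each of finite index in $\Z^k$; any finite intersection of finite-index subgroups of $\Z^k$ is still of finite index, hence infinite, again contradicting finite height.

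Once $K=0$, the restriction $\pi|_H$ is injective, so $H$ embeds as an infinite cyclic subgroup of $\Z$. A generator is uniquely expressible as $t^a z$ with $a\in\Z$ and $z\in\Z^k$, and injectivity forces $a\neq 0$; replacing it by its inverse if $a<0$ (which has the form $t^{-a}z'$ with $z'=\Phi^a(z^{-1})$) gives $H=\langle t^m z\rangle$ with $m\geq 1$, as required.

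The main obstacle is the case $[\Z^k:K]<\infty$ in Step~1: one must both identify via the index formula that $H$ must already lie inside $\Z^k$, and then recognize that conjugation by powers of $t$ produces infinitely many essentially distinct conjugates whose finite intersections are forced to remain infinite. The remaining manipulations are routine verifications, and a more abstract approach via Corollary~\ref{cor0} is unavailable because $\Z^k$ itself contains finite-height subgroups (e.g.\ proper direct summands) that are neither finite nor of finite index.
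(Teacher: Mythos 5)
Your proof is correct, and it takes a genuinely different route from the paper's. The paper disposes of the lemma in one line by applying Corollary~\ref{cor0} to the normal subgroup $N=\field{Z}^k$, whose hypothesis is supplied by Proposition~\ref{central}; you instead prove the key fact $H\cap\field{Z}^k=\{e\}$ by hand, splitting on the index $[\field{Z}^k:K]$ and in each case exhibiting arbitrarily many essentially distinct conjugates with infinite intersection: the $\field{Z}^k$--conjugates, all containing the infinite subgroup $K$, when $[\field{Z}^k:K]=\infty$; and the conjugates $t^nHt^{-n}=\Phi^n(H)$, finitely many of which still intersect in a finite-index (hence infinite) subgroup of $\field{Z}^k$, when the index formula forces $H=K\le\field{Z}^k$. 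Your version is more self-contained (it avoids quoting the external Proposition~\ref{fhimpliessq} on which Corollary~\ref{cor0} rests) at the cost of a case analysis; the passage from $H\cap\field{Z}^k=\{e\}$ to the cyclic form $\langle t^m z\rangle$ is the same in both.

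However, your closing claim that Corollary~\ref{cor0} is unavailable is mistaken. A proper direct summand such as $\field{Z}\times\{0\}\le\field{Z}^2$ does \emph{not} have finite height: essentially distinct conjugates are indexed by distinct cosets, not by distinct subgroups, and in an abelian group every conjugate equals $H$ itself, so an infinite subgroup of infinite index admits arbitrarily many essentially distinct conjugates whose common intersection is the infinite subgroup $H$. This is precisely the content of Proposition~\ref{central} (applied to the infinite center of $\field{Z}^k$): every finite-height subgroup of $\field{Z}^k$ is finite or of finite index, so the hypothesis of Corollary~\ref{cor0} holds and the paper's one-line proof is valid. Your case analysis in effect reproves the special case of that corollary needed here, which is perfectly fine, but the stated reason for avoiding it reflects a misreading of ``essentially distinct.''
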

\begin{proof}
It follows from Corollary~\ref{cor0} that that $H\cap \field{Z}^k$ is trivial. Thus $H$ is a cyclic subgroup generated by $t^{m}z$ where $m$ is a positive integer and $z$ is an element in $\field{Z}^k$.
\end{proof}

\begin{corappend}[Strongly quasiconvex subgroups $\Longrightarrow$ are trivial or have finite index]
\label{cor:nsq}
Let $G=\field{Z}^k\rtimes_{\Phi} \field{Z}=\langle\field{Z}^k,t|tzt^{-1}=\Phi(z)\rangle$ and $H$ a strongly quasiconvex subgroup of $G$. Then either $H$ is trivial or $H$ has finite index in $G$.
\end{corappend}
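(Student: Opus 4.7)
The plan is to combine Theorem~\ref{tran} with Lemma~\ref{p1} to pin down the structure of $H$, and then either invoke Proposition~\ref{central} on a centered finite-index subgroup, or build an explicit quasi-geodesic obstruction in the Sol-type case. Suppose $H$ is strongly quasiconvex in $G = \field{Z}^k \rtimes_\Phi \field{Z}$. By Theorem~\ref{tran}, $H$ has finite height; if $H$ is trivial or of finite index we are done, so assume $H$ is nontrivial with infinite index. Lemma~\ref{p1} then forces $H = \langle a \rangle$ with $a = t^{m} z$, $m \geq 1$, $z \in \field{Z}^k$, and infinite index in turn forces $k \geq 1$.

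If some nonzero power $\Phi^{\ell}$ has a nontrivial fixed point $v \in \field{Z}^k$, I would pass to the finite-index subgroup $G_{\ell} := \field{Z}^k \rtimes \ell\field{Z} \leq G$. The vector $v$ lies in $Z(G_{\ell})$ since it commutes with $\field{Z}^k$ (abelian) and with $t^{\ell}$ (because $\Phi^{\ell}(v) = v$), so $Z(G_{\ell})$ is infinite. Setting $H' := H \cap G_{\ell}$, this subgroup is nontrivial with infinite index in $G_{\ell}$, and by Proposition~\ref{p0}(1) inherits finite height from $H$; Proposition~\ref{central} applied inside $G_{\ell}$ then forces $H'$ to be finite or of finite index in $G_{\ell}$, a contradiction.

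In the remaining case no nonzero power of $\Phi$ has a nontrivial fixed point. Kronecker's theorem applied to the characteristic polynomial of $\Phi \in \GL_k(\field{Z})$ then yields an eigenvalue $|\lambda| > 1$, so $\field{Z}^k$ is exponentially distorted in $G$. My plan is to pick $v_0 \in \field{Z}^k$ with nonzero component along the expanding eigendirection of $\Phi^{m}$ and set $v_n := \Phi^{mn}(v_0) = t^{mn} v_0 t^{-mn}$, so that $|v_n|_G \lesssim n$ while $|v_n|_{\field{Z}^k} \asymp |\lambda|^{mn}$. The concatenation $\gamma_n \colon 1 \to v_n \to v_n a^n \to v_n a^n v_n^{-1} \to a^n$ has length $O(n)$ (the last segment is short because $v_n a^n v_n^{-1} = a^n \cdot (\Phi^{-mn}(v_n) - v_n) = a^n \cdot (v_0 - v_n)$), matching $d_G(1, a^n) \asymp n$, so $\gamma_n$ is a $(K,C)$--quasi-geodesic with constants independent of $n$. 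The hard part will be the lower bound: using the identity $a^{-k} v_n a^{\lfloor n/2\rfloor} = \Phi^{m(n-k)}(v_0) \cdot a^{\lfloor n/2 \rfloor - k}$ together with the exponential-growth estimate $|\Phi^j(v_0)|_G \gtrsim |j|$ (which itself uses that word length in $G$ bounds $\log_{|\lambda|}$ of the $\field{Z}^k$-norm), I would show the midpoint $v_n a^{\lfloor n/2 \rfloor}$ sits at $G$-distance $\gtrsim n$ from $\langle a \rangle$. Letting $n \to \infty$ then violates strong quasiconvexity of $H$, and the corollary follows.
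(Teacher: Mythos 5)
Your reduction is fine as far as it goes: Theorem~\ref{tran} plus Lemma~\ref{p1} do pin $H$ down to $\langle t^m z\rangle$ of infinite index, and your fixed-point case is correct and self-contained (passing to $G_\ell=\field{Z}^k\rtimes\ell\field{Z}$, noting the fixed vector is central there, and combining Proposition~\ref{p0}(1) with Proposition~\ref{central} is a legitimate variant of Lemma~\ref{p2}). The genuine gap is in the remaining case, which is the heart of the matter. You assert that because $\ell(\gamma_n)=O(n)$ and $d_G(1,a^n)\asymp n$, the concatenation $\gamma_n$ is a $(K,C)$--quasi-geodesic with constants independent of $n$. That implication is false: a path whose total length is comparable to the distance between its endpoints can still fail to be a quasi-geodesic, because quasi-geodesity is a condition on \emph{all} pairs of points along the path, and a concatenation of a few geodesics can backtrack badly on subsegments (e.g.\ in $\R$ the path $0\to n\to n/2\to n$ has length $2n$ and endpoint distance $n$, but contains a subpath of length $n/2$ with equal endpoints is avoided only by luck; $0\to n\to n/2$ followed by $n/2 \to n$ already contains the pair $(n,\,n)$ at arc-length distance $n$). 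So establishing that $\gamma_n$ is uniformly quasi-geodesic requires lower bounds on $d(\gamma_n(s),\gamma_n(t))$ for points on different legs --- estimates of exactly the same nature as the ``hard part'' you defer (the midpoint being at distance $\gtrsim n$ from $\langle a\rangle$), which is itself only sketched. As written, the Sol/Anosov-type case --- the only case in which nontrivial infinite-index strongly quasiconvex candidates could exist --- is not proved.

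For comparison, the paper sidesteps all of this metric work: $G$ is solvable (indeed metabelian), so by Dru\c{t}u--Sapir none of its asymptotic cones has a global cut-point, and by Dru\c{t}u--Mozes--Sapir $G$ therefore contains no Morse element; since Lemma~\ref{p1} forces $H=\langle t^m z\rangle$, strong quasiconvexity of $H$ would make $t^m z$ Morse, a contradiction, with no case distinction on fixed points of $\Phi$ needed. If you want to keep your hands-on route, you would need either to verify the quasi-geodesic property of $\gamma_n$ by explicit distance estimates in $\field{Z}^k\rtimes_\Phi\field{Z}$ (using the logarithmic distortion of $\field{Z}^k$ in the expanding/contracting directions, and being careful when $\Phi$ has eigenvalues on the unit circle that are not roots of unity), or to replace $\gamma_n$ by a path you can certify directly; in either case the deferred lower bound on $d(v_na^{\lfloor n/2\rfloor},\langle a\rangle)$ must also be carried out, not just announced.
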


\begin{proof}
We observe that the group $G$ is a solvable group. By \cite{MR2153979}, none of the asymptotic cones of $G$ has a global cut-point. Also by \cite{MR2584607}, the group $G$ does not contain any Morse element.

Assume that $H$ is not trivial and has infinite index in $G$. Then $H$ is a finite height subgroup by Theorem~\ref{tran}. By Proposition~\ref{p1}, $H$ is a cyclic subgroup generated by $t^{m}z$ where $m$ is a positive integer and $z$ is an element in $\field{Z}^k$. Therefore, $G$ contains the Morse element $t^{m}z$ which is a contradiction. Thus, either $H$ is trivial or $H$ has finite index in $G$.
\end{proof}

\begin{lemappend}
\label{p2}
Let $\Phi:\!\field{Z}^k \to \field{Z}^k$ be a group automorphism such that $\Phi^{\ell}$ has a nontrivial fixed point $z_0$ for some non-zero integer $\ell$. Let $H$ be a finite height subgroup of $G=\field{Z}^k\rtimes_{\Phi} \field{Z}=\langle\field{Z}^k,t|tzt^{-1}=\Phi(z)\rangle$. Then either $H$ is trivial or $H$ has finite index in $G$.
\end{lemappend}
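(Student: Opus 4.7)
The plan is to argue by contradiction. Suppose $H$ is a finite height subgroup of $G$ that is neither trivial nor of finite index. By Lemma~\ref{p1}, such an $H$ is necessarily cyclic, say $H=\langle h\rangle$ with $h=t^{m}z$ for some positive integer $m$ and some $z\in\field{Z}^{k}$. I would use the fixed point $z_{0}$ of $\Phi^{\ell}$ to manufacture infinitely many essentially distinct conjugates of $H$ whose common intersection is infinite, directly contradicting the finite height of $H$.

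The key computation is that $z_{0}$ commutes with $h^{\ell}$. Iterating the relation $tzt^{-1}=\Phi(z)$ gives $h^{\ell}=t^{m\ell}w$ for some $w\in\field{Z}^{k}$. Since $\Phi^{\ell}(z_{0})=z_{0}$, one has $\Phi^{m\ell}(z_{0})=(\Phi^{\ell})^{m}(z_{0})=z_{0}$, and hence $\Phi^{-m\ell}(z_{0})=z_{0}$ as well. Using the identity $z_{0}\,t^{m\ell}=t^{m\ell}\,\Phi^{-m\ell}(z_{0})$ together with the commutativity of $\field{Z}^{k}$, I then compute
\[
 z_{0}\,h^{\ell}\,z_{0}^{-1}=z_{0}\,t^{m\ell}\,w\,z_{0}^{-1}=t^{m\ell}\,\Phi^{-m\ell}(z_{0})\,w\,z_{0}^{-1}=t^{m\ell}\,w\,z_{0}\,z_{0}^{-1}=h^{\ell}.
\]
Iterating, $z_{0}^{n}$ commutes with $h^{\ell}$ for every $n\in\field{Z}$.

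With this in hand, every conjugate $z_{0}^{n}Hz_{0}^{-n}$ contains $h^{\ell}$, so
\[
 \bigcap_{n\in\field{Z}} z_{0}^{n}Hz_{0}^{-n}\;\supseteq\;\langle h^{\ell}\rangle,
\]
which is infinite because $h^{\ell}=t^{m\ell}w$ has nonzero $t$--exponent $m\ell$. It remains to check that the cosets $\{z_{0}^{n}H\}_{n\in\field{Z}}$ are pairwise distinct. If $z_{0}^{n}H=z_{0}^{n'}H$, then $z_{0}^{n-n'}\in H\cap\field{Z}^{k}$; but every nontrivial power $h^{p}=t^{mp}w_{p}$ of $h$ has nonzero $t$--exponent $mp$, so $H\cap\field{Z}^{k}$ is trivial, and since $z_{0}$ has infinite order in $\field{Z}^{k}$ this forces $n=n'$. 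Thus $H$ admits infinitely many essentially distinct conjugates with an infinite common intersection, contradicting the finite height assumption. The only real subtlety is the observation that one must raise $h$ to the power $\ell$ rather than use $h$ itself: for arbitrary $m$ the element $h$ need not centralize $z_{0}$, but raising to the power $\ell$ forces the relevant twist exponent to be divisible by $\ell$, at which point the hypothesis $\Phi^{\ell}(z_{0})=z_{0}$ can be invoked.
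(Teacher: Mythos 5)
Your proof is correct and takes essentially the same route as the paper's: reduce to $H=\langle t^{m}z\rangle$ via Lemma~\ref{p1}, observe that $z_{0}$ commutes with $(t^{m}z)^{\ell}$ because $\Phi^{m\ell}(z_{0})=z_{0}$, and then use the infinitely many essentially distinct conjugates $z_{0}^{n}Hz_{0}^{-n}$, all containing $\langle (t^{m}z)^{\ell}\rangle$, to contradict finite height. The only difference is that you spell out the details (the explicit conjugation computation and the verification that the cosets $z_{0}^{n}H$ are pairwise distinct) which the paper merely asserts.
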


\begin{proof}
Assume that $H$ is not trivial and has infinite index in $G$. By Proposition~\ref{p1}, the subgroup $H$ is a cyclic subgroup generated by $t^{m}z$ where $m$ is a positive integer and $z$ is an element in $\field{Z}^k$. Since $\Phi^{\ell}(z_0)=z_0$, the group element $z_0$ commutes to the group element $t^\ell$. Therefore, $z_0$ commutes to the group element $(t^{m}z)^\ell$ in $H$. Therefore, $\bigcap\limits_{i=1}^{\infty} z^{i}_0Hz^{-i}_0$ is infinite. Also $z^{i}_0H\neq z^{j}_0H$ for each $i\neq j$. Therefore, $H$ is not a finite height subgroup of $G$ which is a contradiction. Therefore, either $H$ is trivial or $H$ has finite index in $G$.
\end{proof}

\begin{lemappend}
\label{p3}
Let $\Phi:\!\field{Z}^k \to \field{Z}^k$ be a group automorphism such that $\Phi^{\ell}$ has no nontrivial fixed point for every non-zero integer $\ell$. Let $G=\field{Z}^k\rtimes_{\Phi} \field{Z}=\langle\field{Z}^k,t|tzt^{-1}=\Phi(z)\rangle$ and $H$ be the cyclic subgroup of $G$ generated by $t^{m}z$ where $m$ is a positive integer and $z$ is an element in $\field{Z}^k$. Then $H$ has height at most $m$. In particular, any cyclic group generated by $tz$ where $z\in \field{Z}^k$ is malnormal.
\end{lemappend}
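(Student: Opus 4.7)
The plan is to fix $m+1$ essentially distinct left cosets $g_1H, \dots, g_{m+1}H$ and derive a contradiction from the assumption that the intersection $K := \bigcap_{i=1}^{m+1} g_i H g_i^{-1}$ is nontrivial. (Note that $G$ is torsion--free, so the conclusion ``$K$ is finite'' is equivalent to ``$K$ is trivial''.) Writing $h := t^m z$ and picking a nontrivial $k \in K$, for each $i$ there exists $n_i \ne 0$ with $k = g_i h^{n_i} g_i^{-1}$.

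The first step is to reduce to a single exponent. The homomorphism $\pi \colon G \to \field{Z}$ with kernel $\field{Z}^k$ sending $t^au \mapsto a$ is conjugation--invariant (since its target is abelian), so $mn_i = \pi(h^{n_i}) = \pi(k)$ is independent of $i$. Let $n$ denote the common value. From $g_i h^n g_i^{-1} = k = g_j h^n g_j^{-1}$ we obtain $g_j^{-1}g_i \in C := C_G(h^n)$ for all $i,j$, so all $g_i$ lie in the single left coset $g_1 C$.

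The heart of the argument is to show $[C : H] \le m$. Using the relation $zt = t\Phi^{-1}(z)$, one checks by direct computation that an element $u \in \field{Z}^k$ commutes with $h^n$ if and only if $\Phi^{mn}(u) = u$; applying the standing hypothesis with $\ell = mn \ne 0$ gives $C \cap \field{Z}^k = \{0\}$. Thus the restriction $\pi|_C$ is injective, so $C$ embeds into $\field{Z}$ and is therefore infinite cyclic, generated by some $g_0$ with $\pi(g_0) = d > 0$. Since $h \in C$ and $\pi(h) = m$, we have $d \mid m$ and, by injectivity of $\pi|_C$, $h = g_0^{m/d}$. Consequently $H = \langle g_0^{m/d}\rangle$ is a subgroup of $C$ of index $m/d \le m$.

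To finish, the $m+1$ distinct cosets $g_iH$ all lie in $g_1 C$, so after translating by $g_1^{-1}$ they give $m+1$ distinct elements of $C/H$; but $|C/H| \le m$, a contradiction. This proves the height is at most $m$, and the ``in particular'' assertion follows because in the torsion--free group $G$, being almost malnormal (height $\le 1$) coincides with being malnormal. The main technical step I expect to grind through is the centralizer identification $C \cap \field{Z}^k = \mathrm{Fix}(\Phi^{mn})$, which is routine semidirect--product bookkeeping with the normal form $t^a u$; every other step is essentially formal once this is in place.
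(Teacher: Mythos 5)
Your proposal is correct, but it is organized differently from the paper's proof, so a comparison is worth recording. Both arguments turn on the same core computation --- by the no-fixed-point hypothesis, no nontrivial element of $\field{Z}^k$ commutes with a nonzero power of $h=t^mz$ --- but they deploy it differently. The paper argues pairwise: writing each $g_i$ in normal form and applying pigeonhole to the $t$--exponents modulo $m$, it finds $i\neq j$ such that $g=g_i^{-1}g_j=z_0h^q$ with $z_0\in\field{Z}^k\setminus\{e\}$ (nontriviality of $z_0$ coming from $g\notin H$); the infinite intersection of the two corresponding conjugates forces $g$, hence $z_0$, to commute with some $h^p$ with $p\neq 0$, giving $\Phi^{mp}(z_0)=z_0$, a contradiction. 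You instead prove a structural statement: the centralizer $C=C_G(h^n)$ meets $\field{Z}^k$ trivially, hence embeds into $\field{Z}$ under $\pi$, is infinite cyclic, and contains $H$ with index $m/d\le m$; a nontrivial common element of all $m+1$ conjugates forces every $g_i$ into a single left coset of $C$, so the $m+1$ distinct $H$--cosets inject into $C/H$, contradicting $\lvert C/H\rvert\le m$. Thus in your proof the bound $m$ enters through the index $[C:H]$ rather than through a pigeonhole on exponents mod $m$, and where the paper only ever uses two of the $m+1$ conjugates, you use all of them in the final count. The paper's route is shorter and more ad hoc; yours is slightly longer but yields the cleaner byproduct that centralizers of powers of elements of $G\setminus\field{Z}^k$ are infinite cyclic, makes the case $m=1$ (where $d=1$ and $H=C$) and hence the malnormality claim immediate, and explicitly supplies the torsion-freeness remark that the paper leaves implicit for the ``in particular'' statement. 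All the individual steps you flag as routine (conjugation-invariance of $\pi$, the identification $C\cap\field{Z}^k=\mathrm{Fix}(\Phi^{\pm mn})=\{0\}$, and $h=g_0^{m/d}$ via injectivity of $\pi|_C$) check out.
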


\begin{proof}
Assume that $H$ does not have height at most $m$. Then there are $(m+1)$ distinct left cosets $g_1H, g_2H, g_3H,\cdots, g_{m+1}H$ such that $\bigcap\limits_{i=1}^{m+1}g_i H g^{-1}_i$ is infinite. We observer that there are $i\neq j$ such that $g=g^{-1}_ig_j$ can be written of the form $t^{mq}z_1$ for some integer $q$ and some group element $z_1\in\field{Z}$. Since $g=t^{mq}z_1$ is not a group element in $H$, we can write $g=z_0 (t^mz)^q$ for some group element $z_0\in \field{Z}-\{e\}$.

Since the subgroup $g_iHg^{-1}_i\cap g_jHg^{-1}_j$ is infinite, the subgroup $H\cap gHg^{-1}$ is also infinite. Therefore, there is a non-zero integer $p$ such that $g(t^mz)^pg^{-1}=(t^mz)^p$. Also, $g=z_0 (t^mz)^q$ for some group element $z_0\in \field{Z}-\{e\}$. Thus, $z_0(t^mz)^pz_0^{-1}=(t^mz)^p$. It is straight forward that $(t^mz)^p=t^{mp}z'$ for some $z'\in \field{Z}^k$. Therefore, $z_0(t^{mp}z')z_0^{-1}=t^{mp}z'$. This implies that $z_0(t^{mp})z_0^{-1}=t^{mp}$. Thus, $\Phi^{mp}(z_0)=t^{mp}z_0t^{-mp}=z_0$ which is a contradiction. Therefore, $H$ is a finite height subgroup.
\end{proof}

\begin{exmpappend}
\label{exmp:1}
Let $\Phi \colon \Z^2 \to \Z^2$ be an automorphism such that its corresponding matrix has the form $ \Bigl(\begin{matrix}
a&b \\ c&d
\end{matrix} \Bigr)$ where $ad -bc =1$ and $|a+d| >2$. We note that $\phi$ has two real eigenvalues $\lambda$ and $1/\lambda$ such that $\lambda \neq 1,-1$ and $Trace(\Phi) = a+ d = \lambda + 1/ \lambda$. For any non-zero integer $\ell$, the automorphism $\Phi^{\ell}$ has two eigenvalues $\lambda^{\ell}$ and $1/(\lambda)^{\ell}$ which have absolute value $\neq 1$. Hence $\Phi^{\ell}$ has no nontrivial fixed point. Another way to see is that $\Phi^{\ell}$ has the form of $ \Bigl(\begin{matrix}
a'&b' \\ c'&d'
\end{matrix} \Bigr)$ where $a'd' -b'c' =1$. Since $Trace(\Phi^{\ell}) = a'+d' = \lambda^{\ell} + 1/(\lambda)^{\ell}$, it follows that $|a' +d'| >2$. It easy to see that the matrix $ \Bigl(\begin{matrix}
a'&b' \\ c'&d'
\end{matrix} \Bigr)$ has no nontrivial fixed point (otherwise $|a'+d'|=2$). By Proposition~\ref{thm:introduction1} and Corollary~\ref{cor:nsq}, the group $Z^{2} \rtimes_\Phi \Z$ has a finite height subgroup $H$ which is not strongly quasiconvex.
\end{exmpappend}

\subsection*{The $\mathcal {PS}$ system and strong quasiconvexity}
In this section, we first give the proof for the statement that all special paths for a flip graph manifold are uniform quasi-geodesic. This fact seems not to be proved explicitly in \cite{Sisto11} and \cite{Sisto18}. Then we give the proof for Lemma~\ref{lem:wellknown} which states that a $\mathcal {PS}$--contracting subset is strongly quasiconvex. 


\begin{propappend}
\label{uniformquasi-geodesic}
The special path for a flip graph manifold is uniform quasi-geodesic.
\end{propappend}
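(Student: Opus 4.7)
The plan is to prove $\operatorname{length}(\gamma) \le K \cdot d(x,y) + C$ for every special path $\gamma = \gamma_0 \cdots \gamma_n$ from $x$ to $y$, with constants $K, C$ depending only on the flip manifold $M$, and then to extend this bound to arbitrary sub-arcs of $\gamma$ to obtain the quasi-geodesic property. Working in the nonpositively curved metric on $M$ lifted to $\tilde M$, I would let $\beta = [x,y]$ denote the CAT(0) geodesic. Since the walls are totally geodesic flat planes in $\tilde M$, $\beta$ passes through the same sequence of pieces $\tilde M_0, \dots, \tilde M_n$ as $\gamma$, meeting each wall $\tilde T_i$ at a unique point $a_i$ ($0 \le i \le n-1$), with $\beta \cap \tilde M_i$ a geodesic from $a_{i-1}$ to $a_i$ in the product metric (setting $a_{-1} = x$, $a_n = y$). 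Thus $d(x,y) = \sum_{i=0}^n d_{\tilde M_i}(a_{i-1}, a_i)$, and the target reduces to a per-piece estimate $d_{\tilde M_i}(x_i, x_{i+1}) \le d_{\tilde M_i}(a_{i-1}, a_i) + D$ for a uniform $D$ depending only on $M$.

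To prove the per-piece estimate I would decompose $\tilde M_i = \tilde F_i \times \R$ into base-surface and fiber contributions. The base-surface bound relies on the bilipschitz equivalence of $\tilde F_i$ with a fattened tree (the same fact used in the proof of Lemma~\ref{lem:1}), which gives $d_{\tilde F_i}(p_i, q_i) \le d_{\tilde F_i}(\alpha_{i-1}, \alpha_i) + \varepsilon$, where $\alpha_{i-1}, \alpha_i$ are the base-surface positions of $a_{i-1}, a_i$ and $(p_i, q_i)$ is the minimizing pair on the two boundary lines of $\tilde F_i$. The fiber bound uses the flip condition: inside each wall the fiber direction of $\tilde M_i$ is parallel to a boundary line of the adjacent base surface, so the $\tilde M_i$-fiber coordinate of $x_i$ equals the position of the neighboring construction point $q_{i-1}$ along a boundary line of $\tilde F_{i-1}$. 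A careful bookkeeping exploiting the alternation between surface and fiber coordinates at each wall crossing shows that the cumulative fiber-direction error between $\gamma$ and $\beta$ is bounded by a uniform constant per crossing. Summing over pieces, $\operatorname{length}(\gamma) \le d(x,y) + D n$; since any two distinct walls in $\tilde M$ are separated by at least some $\rho > 0$, we have $n \le 1 + d(x,y)/\rho$, establishing the global bound $\operatorname{length}(\gamma) \le (1 + D/\rho) d(x,y) + D$.

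For sub-arcs, given $u, v$ on $\gamma$ I would compare the sub-arc of $\gamma$ from $u$ to $v$ with the special path $\gamma^{uv}$ constructed directly between these two points. Both paths traverse the same pieces; the construction of the intermediate transition points $(p_j, q_j)$ and hence of $x_{j+1}$ for non-extremal $j$ is intrinsic to the walls and independent of the endpoints, so $\gamma$ and $\gamma^{uv}$ agree on all but the first and last pieces. In these extremal pieces both paths are CAT(0) geodesics whose endpoints on the adjacent walls differ by a uniformly bounded amount, again controlled by the base-surface coarse tree property and the flip geometry. Hence $\operatorname{length}(\text{sub-arc}) \le \operatorname{length}(\gamma^{uv}) + \text{const}$, and applying the global length bound to $\gamma^{uv}$ yields the quasi-geodesic property. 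The main obstacle will be the bookkeeping of fiber-direction contributions across multiple wall crossings; the perpendicularity of the two adjacent fiber directions at each wall is essential here, since it forces any fiber-coordinate ``shift'' when crossing into a new piece to correspond, up to bounded error, to a base-surface coordinate on the wall, preventing accumulation of errors beyond the linear allowance.
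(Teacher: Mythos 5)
Your skeleton is the paper's: compare the special path with the $\CAT(0)$ geodesic through the same sequence of walls, charge a bounded error to each wall crossing, and convert the number of crossings into a linear function of $d(x,y)$ via the minimal separation $\rho$ between JSJ planes. But the step you reduce to is false, and the step you defer is the whole proof. The per-piece estimate $d_{\tilde{M}_i}(x_i,x_{i+1})\le d_{\tilde{M}_i}(a_{i-1},a_i)+D$ cannot hold with a uniform $D$: on the wall $\tilde{T}_{i-1}$ the $\tilde{M}_i$-fiber coordinate of $x_i$ is the position of the construction point $q_{i-1}$ on the boundary line of $\tilde{F}_{i-1}$, whereas the fiber coordinate of $a_{i-1}$ is wherever the geodesic happens to cross, and these can be arbitrarily far apart. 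Concretely, take two adjacent pieces with $x$ at bounded distance from the wall and $y\in\tilde{M}_1$ at bounded horizontal distance but with fiber offset $N$ (equivalently, offset $N$ along the boundary line of $\tilde{F}_0$): the geodesic crosses the wall at a point whose $\tilde{F}_0$-position drifts away from $q_0$ without bound as $N\to\infty$, so in $\tilde{M}_1$ the special path's fiber travel exceeds the geodesic's by an unbounded amount (the totals agree up to a constant only because the special path is correspondingly shorter in $\tilde{M}_0$). Thus the ``fiber-direction error per crossing'' is not a uniform constant piece by piece; it becomes one only after being exchanged against the geodesic's horizontal savings in the adjacent piece, using that $d_h$ along a boundary line of $\tilde{F}_{i-1}$ coincides with $d_v$ in $\tilde{M}_i$ together with the nearest-point projection inequality in the hyperbolic base. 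This exchange is precisely the paper's Horizontal Slide Lemma (Lemma~\ref{HSildeLem}), applied telescopically (two slides per wall) and most cleanly in the $L^1$-metric so that horizontal and fiber contributions separate; your proposal asserts the resulting bound (``careful bookkeeping \dots a uniform constant per crossing'') but never states or proves the estimate that makes it true, and perpendicularity of the two fiber directions by itself does not prevent the accumulation.

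On the sub-arc step: your observation that a subpath of $\gamma$ and the special path between its endpoints agree except in the two extremal pieces is correct, and the bounded discrepancy there is indeed provable (the horizontal projection of an interior point $u$ lies on the minimizing segment $[p_i,q_i]$, so its projection to the exit line is uniformly close to $q_i$), though you only assert it. That part is secondary; the decisive gap is the missing slide/exchange estimate above, without which the claimed length bound, and hence uniform quasi-geodesicity, is not established.
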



Let $M$ be a flip manifold. Let $\tilde{M}_0 = \tilde{F}_{0} \times \R$ and $\tilde{M}_1 = \tilde{F}_{1} \times \R$ be two adjacent pieces in $\tilde{M}$ with a common boundary $\tilde{T}_0 = \tilde{M}_{0} \cap \tilde{M}_{1}$. Since $M$ is a flip manifold, it follows that the boundary line $\overrightarrow{\ell_1}: = (\tilde{F}_{0} \times \{0\}) \cap \tilde{T}_{0}$ of $\tilde{F}_{0}$ projects to a fiber in $M_1$ and the boundary line $\overleftarrow{\ell_1}: = (\tilde{F}_{1} \times \{0\}) \cap \tilde{T}_{0}$ of $\tilde{F}_1$ projects to a fiber in $M_0$.

By abuse of language, we denote by $d_h$ the hyperbolic distance on $\tilde F_i$, and $d_v$ the fiber distance of $\tilde M_i$ for $i=0,1$. However, the following fact is crucial: the $d_h$-distance in $\tilde M_0$ on the boundary $\overrightarrow{\ell_1}$ of $\tilde F_0$ coincides with the $d_v$-distance on the fiber $\overleftarrow{\ell_1}$ of $\tilde M_1$.

Let $\delta=[x,y][y,z]$ be a concatenated path of geodesics $[x, y]$ and $[y,z]$ where $x=(x^h,x^v)\in \tilde M_0$, $y=(y^h,y^v)=(y^v, y^h)\in \tilde M_0\cap \tilde M_1=\tilde T_0$, and $z=(z^h,z^v)\in \tilde M_1$. Note that the coordinates of $y$ in $\tilde M_0$ and $\tilde M_1$ are switched.

Consider a \textit{minimizing horizontal slide} of $y$ in $\tilde M_0$ which changes its $\tilde F_0$-coordinate only so that the projection of $[x,y]$ on $\tilde F_0$ is orthogonal to $\overrightarrow{\ell_1}$. To be precise, a minimizing horizontal slide in $\tilde M_0$ applied to $y$ gives a point $w=(w^h,y^v)\in \tilde{T_0} \cap \tilde M_0$ with the same $\overleftarrow{\ell_1}$--coordinate as $y$ so that $d_h(x^h, w^h)$ minimizes the distance $d_h(x^h, \overrightarrow{\ell_1})$.

We need the following observation that a minimizing horizontal slide does not increase distance too much. In what follows, we will work with $L^1$-metric on $\tilde M$ and denote by $|x-y|$ by the $L^1$-distance from $x$ to $y$.
\begin{lemappend}[Horizontal slide]\label{HSildeLem}
There exists a constant $C>0$ depending only on $M$ with the following property. If $w=(w^h,y^v)\in \tilde{T_1} \cap \tilde M_0$ is a point so that $d(x^h, w^h)$ minimizes the distance of $d(x^h, \overrightarrow{\ell_1})$, then $$|x-w|+|w-z|\le |x-y| +|y-z|+ C$$
\end{lemappend}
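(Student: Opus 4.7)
The plan is to expand the $L^1$--distances in product coordinates on both sides of $\tilde T_0$, cancel the pieces that are unchanged by the slide, and reduce the whole lemma to a single uniform inequality involving nearest-point projection to the geodesic $\overrightarrow{\ell_1}\subset \tilde F_0$.

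First I would use that $w=(w^h,y^v)$ and $y=(y^h,y^v)$ share the same $v$--coordinate in $\tilde M_0$, so that $|x-w|$ and $|x-y|$ (computed in $\tilde M_0$) differ exactly by $d_h(x^h,w^h)-d_h(x^h,y^h)$. On the $\tilde M_1$--side, the flip identification sends the $\tilde M_0$--fiber coordinate to the $\tilde F_1$--coordinate, so $y$ and $w$ also agree in their $\tilde F_1$--coordinate, and hence $|w-z|$ differs from $|y-z|$ only in the fiber term in $\tilde M_1$. Using the stated coincidence that $d_v$ along the fiber of $\tilde M_1$ equals $d_h$ along $\overrightarrow{\ell_1}\subset \tilde F_0$, that fiber term becomes a genuine $d_h$--distance on $\overrightarrow{\ell_1}$. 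After subtraction the expression $(|x-w|+|w-z|)-(|x-y|+|y-z|)$ reduces to
\[
\bigl(d_h(x^h,w^h)-d_h(x^h,y^h)\bigr)\;+\;\bigl(d_h(w^h,z^v)-d_h(y^h,z^v)\bigr),
\]
where in the second bracket all three arguments lie on the single geodesic $\overrightarrow{\ell_1}$.

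The second step is routine: since $w^h$, $y^h$, $z^v$ are collinear on $\overrightarrow{\ell_1}$, the reverse triangle inequality along this line bounds the second bracket by $d_h(w^h,y^h)$. This reduces the whole lemma to producing a constant $C$, depending only on $M$, such that
\[
d_h(x^h,w^h)+d_h(w^h,y^h)-d_h(x^h,y^h)\le C.
\]

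The main obstacle is this final inequality, and it is where the hyperbolic geometry of $\tilde F_0$ does the real work. Since $w^h$ is the nearest-point projection of $x^h$ to the geodesic $\overrightarrow{\ell_1}$, it is a standard consequence of the thinness of triangles in a $\delta$--hyperbolic space that the broken path $x^h\to w^h\to y^h$ is a $(1,C)$--quasi-geodesic, with $C$ depending only on $\delta$. Since $\tilde F_0$ is the universal cover of one of finitely many compact hyperbolic base surfaces of $M$, the hyperbolicity constant $\delta$ is uniform, and the resulting $C$ can be chosen to work for every adjacent pair of pieces $\tilde M_0,\tilde M_1$ in $\tilde M$, which yields the claimed bound.
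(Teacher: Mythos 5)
Your proposal is correct and follows essentially the same route as the paper: expand the $L^1$--distances in product coordinates on both sides of $\tilde T_0$, cancel the unchanged terms, convert the $\tilde M_1$--fiber term to $d_h$--distance along $\overrightarrow{\ell_1}$ via the flip identification, and finish with the standard nearest-point-projection inequality $d_h(x^h,w^h)+d_h(w^h,y^h)\le d_h(x^h,y^h)+C$ in the uniformly hyperbolic base $\tilde F_0$. Your use of the reverse triangle inequality to bound $d_v(w^h,z^v)-d_v(y^h,z^v)$ by $d_v(w^h,y^h)$ is in fact the correct form of the step that the paper states as an equality, so no gap remains.
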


\begin{proof}
We have
$$|x-w| + |w-z| =  d_h(x^h, w^h)+d_h(y^v, z^h)+d_v(x^v, y^v)+d_v(w^h, z^v)
$$
and
$$
|x-y| + |y-z| = d_h(x^h, y^h)+d_h(y^v, z^h)+d_v(x^v, y^v)+d_v(y^h, z^v).
$$
Hence, it suffices to find a constant $C$ depending only on $M$ such that the following inequality holds.
\begin{equation}\label{L1DistEQ}
d_h(x^h, w^h) +d_v(w^h, z^v) 
\le d_h(x^h, y^h)+d_v(y^h, z^v)+C.
\end{equation}
By assumption, $w^h$ is a shortest projection point of $x^h$ to $\overrightarrow{\ell_1}$. By hyperbolicity of $\tilde F_0$, since $y^h$ and $w^h$ lie on $ \overrightarrow{\ell_1}$, we have 
\begin{equation}\label{HypDistEQ}
d_h(x^h, w^h)+d_h(w^h, y^h)\le d_h(x^h, y^h) + C
\end{equation}
for some constant $C>0$ depending only on hyperbolicity constant of $\tilde F_0$. 
Since the $d_h$-distance in $\tilde M_0$ on the boundary $\overrightarrow{\ell_1}$ of $\tilde F_0$ coincides with the $d_v$-distance of $\tilde M_1$ on the fiber $\overrightarrow{\ell_1}$, we then obtain $$d_h(w^h, y^h)=d_v(w^h, y^h)=|d_v(w^h, z^v)-d_v(y^h, z^v)|,$$
which with (\ref{HypDistEQ}) together proves (\ref{L1DistEQ}). 
The lemma is thus proved.
\end{proof}

\begin{proof}[Proof of the Proposition~\ref{uniformquasi-geodesic}]
We use the notion $a \asymp b$ if there exists $K = K(M)$ such that $a/ K \le b \le Ka$.

We follow the notations in Definition~\ref{defn:specialpath}. Let $\gamma=\gamma_0\cdot\gamma_1 \cdots \gamma_n$ be a special path between $x = x_0 \in \tilde M_0$ and $y= x_{n+1}\in \tilde M_{n}$ so that $\gamma_i=[x_i, x_{i+1}]\subset \tilde M_i$ where $x_{i+1}\in \tilde T_i:=\tilde M_i\cap \tilde M_{i+1}$ for $0\le i \le n-1$. Then we have 
$$
\ell(\gamma) = \sum_{i=0}^n d(x_i, x_{i+1}) \asymp \sum_{i=0}^n |x_i- x_{i+1}|
$$
where $\asymp$ follows from the fact that the $L^1$-metric is bi-lipschitz equivalent to a $L^2$-metric on each piece.

Let $\delta$ be the $\CAT(0)$ geodesic with same endpoints as $\gamma$. Let $y_i$ be the intersection point of $\delta$ with $\tilde T_{i-1}$. We let $y_0: = x_0$ and $y_{n+1} := x_{n+1}$. Then
$$
\ell(\delta) = \sum_{i=0}^n d(y_i, y_{i+1})\asymp \sum_{i=0}^n |y_i- y_{i+1}|
$$
We apply a sequence of minimizing horizontal slides to the endpoints of geodesics $[y_i, y_{i+1}]$ in order to transform the geodesic $\delta$ to the special path $\gamma$. More precisely, we first apply a horizontal slide (in $\tilde{M}_0$) of $y_1\in\tilde T_0$ to $w_1$ so by Lemma \ref{HSildeLem}, we have 
\[
|y_0 - w_1| + |w_1 - y_2| \le |y_0 - y_1| + |y_1 -y_2| + C
\]
and then apply a horizontal slide of $w_1$ to $x_1$ in $\tilde{M}_1$ we have
\[
|y_2 - x_1| + |x_1 - y_0| \le |y_2 -w_1| + |w_1 -y_0| + C
\]
Using the fact $y_0 = x_0$ and two inequalities above, we have
\[
|x_0 -x_1| = |y_0 - x_1| \le |y_0 -x_1|+ |x_1 -y_2| \le |y_0-y_1|+ |y_1 -y_2| +2C 
\]
Inductively, we apply horizontal slides of $y_i$ to $w_i$ in the piece $\tilde M_{i-1}$ and then $w_i$ to $x_i$ in the piece $\tilde M_{i}$ to get
\[
\sum_{i=0}^{n}|x_i -x_{i+1}| \le 2\sum_{i=0}^{n}|y_i -y_{i+1}| + 2C(n+1)
\]
Let $\rho>0$ be the minimal distance of any two JSJ planes. Since $x_0 \in \tilde{M}_0$ and $x_{n+1} \in \tilde{M}_{n+1}$, it follows that $(n-1)\rho \le d(x,y)$. Hence
\begin{align*}
\sum_{i=0}^{n}|x_i -x_{i+1}| &\le 2\sum_{i=0}^{n}|y_i -y_{i+1}| + 2C(n+1)\\
&=2\sum_{i=0}^{n}|y_i -y_{i+1}| + 2C(n-1) + 4C\\
&\le 2\sum_{i=0}^{n}|y_i -y_{i+1}| + 2C d(x,y)/\rho + 4C
\end{align*}
Therefore, we showed that there exists $\kappa >0$ such that for any special path $\gamma$ in $\tilde{M}$ we have $\ell(\gamma) \le \kappa d(\gamma_{+}, \gamma_{-}) + \kappa$, where $\gamma_{+}$ and $\gamma_{-}$ are endpoints of $\gamma$. The proposition is proved. 
\end{proof}

Now we give a proof for Lemma~\ref{lem:wellknown}. Before giving the proof, we need the following fact. Recall that all paths in the $\mathcal {PS}$ system are $c$--quasi-geodesic for some uniform constant $c$.

\begin{lemappend}\cite[Lemma 2.4(3,4)]{Sisto18}\label{BallProjLem}
Let $\pi$ be a $\mathcal {PS}$-projection with constant $C$ on $A \subset X$. Then there exists a constant $k=k(c, C)>0$ with the following properties: 
\begin{enumerate}
    \item For each $x \in X$ we have $diam(\pi(B_r(x))) \le C$ for $r =
d(x, A)/k - k$.
    \item For each $x\in X$ we have $d(x,\pi(x))\leq kd(x,A)+k.$
\end{enumerate}

\end{lemappend}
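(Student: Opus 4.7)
The plan is to exploit the two defining properties of a $\mathcal{PS}$--contracting pair $(\pi,A)$: that $\pi$ moves points of $A$ by at most $C$, and that whenever two projections $\pi(x),\pi(y)$ lie at distance $\ge C$, every $\mathcal{PS}$--path from $x$ to $y$ must come within $C$ of both projections. Combined with the uniform quasi-geodesic bound $\ell(\gamma)\le c\,d(\gamma_-,\gamma_+)+c$ for paths in $\mathcal{PS}$, these two ingredients yield both conclusions through essentially the same contradiction scheme: a $\mathcal{PS}$--path cannot be short if it has to detour near two widely separated projection points.

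For (2), I would fix $x\in X$, choose $a\in A$ with $d(x,a)\le d(x,A)+1$, and take a $\mathcal{PS}$--path $\gamma$ from $x$ to $a$. Property (1) of Definition~\ref{defn:contracting} gives $d(a,\pi(a))\le C$. If $d(\pi(x),\pi(a))\le C$, the triangle inequality immediately yields $d(x,\pi(x))\le d(x,A)+1+2C$. Otherwise property (2) supplies a point $z\in\gamma$ with $d(z,\pi(x))\le C$, and then
\[
d(x,\pi(x))\le d(x,z)+C\le \ell(\gamma)+C\le c\,d(x,a)+c+C\le c\,d(x,A)+2c+C.
\]
A single $k$ large enough (say $k=3c+3C+3$) absorbs both cases into the desired linear estimate $d(x,\pi(x))\le k\,d(x,A)+k$.

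For (1), I would argue by contraposition. Fix $x\in X$ and, for $y_1,y_2\in B_r(x)$, suppose $d(\pi(y_1),\pi(y_2))\ge C$. Let $\gamma$ be a $\mathcal{PS}$--path from $y_1$ to $y_2$; property (2) provides $z\in\gamma$ with $d(z,\pi(y_1))\le C$, while the quasi-geodesic bound gives $\ell(\gamma)\le c\,d(y_1,y_2)+c\le 2cr+c$. Since $\pi(y_1)\in A$,
\[
d(x,A)-r\le d(y_1,A)\le d(y_1,\pi(y_1))\le d(y_1,z)+C\le 2cr+c+C,
\]
which forces $r\ge (d(x,A)-c-C)/(2c+1)$. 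Choosing $k=k(c,C)$ large enough (and large enough also to subsume the $k$ from step (2)) so that $r=d(x,A)/k-k$ contradicts this inequality whenever $r>0$ delivers the claim: $d(\pi(y_1),\pi(y_2))<C$ for all such $y_1,y_2$, hence $\diam \pi(B_r(x))\le C$.

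The main technical point, rather than a genuine obstacle, is verifying that a single $k=k(c,C)$ works uniformly in $x$; the case where $d(x,A)$ is small is handled trivially, since then $r\le 0$ and $B_r(x)$ is empty or a singleton. The conceptual content is simply that a $\mathcal{PS}$--projection is coarsely Lipschitz both globally, giving (2), and in the stronger ball-contracting form on balls far from $A$, giving (1), because any $\mathcal{PS}$--path that witnesses a large projection gap must be long enough to reach out near both projection points.
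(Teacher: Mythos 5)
Your argument is correct, and it is worth noting that the paper itself gives no proof of this lemma at all: it is quoted verbatim from Sisto (Lemma 2.4(3,4) of \cite{Sisto18}), so you have supplied a self-contained derivation of a fact the authors simply import. Both parts of your argument use the definition of a $\mathcal{PS}$--projection exactly as Sisto's original proof does (a $\mathcal{PS}$--path witnessing a projection gap $\ge C$ must pass $C$--close to both projection points, and this is incompatible with the path being a uniform quasi-geodesic between nearby points), so the route is the standard one. The only point to tidy up is your invocation of the length bound $\ell(\gamma)\le c\,d(\gamma_-,\gamma_+)+c$: the definition of a path system only guarantees that members are $(c,c)$--quasi-geodesics, which need not be rectifiable with that length bound in an abstract metric space. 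This is cosmetic rather than a gap: wherever you write $d(y_1,z)\le\ell(\gamma)$ you can instead bound $d(y_1,z)$ directly from the quasi-geodesic inequalities (the domain of $\gamma$ has length at most $c\,d(\gamma_-,\gamma_+)+c^2$, so any point $z$ on $\gamma$ satisfies $d(\gamma_-,z)\le c^2 d(\gamma_-,\gamma_+)+c^3+c$), which only changes the constants absorbed into $k=k(c,C)$; alternatively, in the concrete setting of this paper the special paths are concatenations of geodesics and Proposition~\ref{uniformquasi-geodesic} does give the length bound you quote. With that substitution both estimates go through exactly as you wrote them, and your choice of a single $k$ depending only on $c$ and $C$ (handling $r\le 0$ trivially) is fine.
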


\begin{proof}[Proof of the Lemma~\ref{lem:wellknown}]
Let $\gamma$ be a $(\lambda, \lambda)$-quasi-geodesic with two endpoints in $A$, where $\lambda\ge 1$. Let $\bar{c}=\max\{k, \lambda\}$.
For a constant $R>0$, we consider a connected component $\alpha$ of $\gamma\setminus N_R(A)$ with initial and terminal endpoints $\alpha_-, \alpha_+$. We need at most $\bar c \ell(\alpha)/(R-\bar c^2)$ balls of radius $R/\bar c-\bar c$ to cover $\alpha$, where $\ell(\alpha)$ denotes the length of $\alpha$. 

We now set $R=\bar c^2(1+2C).$
On the one hand, we obtain by Lemma \ref{BallProjLem}(1) that $$diam(\pi(\alpha))\le \bar c C \ell(\alpha)/(R-\bar c^2)\le \ell(\alpha)/2\bar c.$$

On the other hand, since $\alpha$ is a $(\lambda,\lambda)$-quasi-geodesic for $\lambda\le \bar c$, we have
\begin{align*}
\ell(\alpha)&\le \lambda d(\alpha_-,\alpha_+)+\lambda   \\
& \le \bar c(d(\alpha_-, \pi(\alpha_-))+d(\alpha_+, \pi(\alpha_+))+diam(\pi(\alpha)))+\bar c\\
& \le \bar c(2\bar cR+2\bar c+ \ell(\alpha)/2\bar c)+\bar c, 
\end{align*} 
where the last two inequalities follow from a projection estimate combined with Lemma \ref{BallProjLem}(2). We thus obtain $\ell(\alpha)\le 4\bar c^2 (R+2)$. As a consequence, we have that $\gamma$ is contained in the $4\bar c^2(R+2)$-neighborhood of $A$. This proves the lemma.
\end{proof}

\bibliographystyle{alpha}
\bibliography{FH}

\end{document}